\numberwithin{equation}{section}
 \newtheorem{theorem}{Theorem}[section]
\newtheorem{lemma}[theorem]{Lemma}
\newtheorem{remark}[theorem]{Remark}
\newtheorem{assumption}[theorem]{Assumption}
\begin{document}
\title{An immersed $CR$-$P_0$ element for Stokes interface problems and the optimal convergence analysis
}
\author{
Haifeng Ji\footnotemark[1]
\qquad
Feng Wang\footnotemark[2] \qquad
Jinru Chen\footnotemark[3] \qquad
Zhilin Li\footnotemark[4] 
}
\footnotetext[1]{School of Science, Nanjing University of Posts and Telecommunications, Nanjing 210023, China  (hfji@njupt.edu.cn)}
\footnotetext[2]{Jiangsu Key Laboratory for NSLSCS, School of Mathematical Sciences, Nanjing Normal University, Nanjing 210023, China  (fwang@njnu.edu.cn)}
\footnotetext[3]{Jiangsu Key Laboratory for NSLSCS, School of Mathematical Sciences, Nanjing Normal University, Nanjing 210023, China,\\
School of Mathematics and Information Technology, Jiangsu Second Normal University, Nanjing 211200, China  (jrchen@njnu.edu.cn)}
\footnotetext[4]{Department of Mathematics, North Carolina State University, Raleigh, NC 27695, USA  (zhilin@math.ncsu.edu)}

\date{}
\maketitle

\begin{abstract}
This paper presents and analyzes an immersed finite element (IFE) method for solving Stokes interface problems with a piecewise constant viscosity coefficient that has a jump across the interface. In the method, the triangulation does not need to fit the interface and the IFE spaces are constructed from the traditional $CR$-$P_0$ element with modifications near the interface according to the interface jump conditions. We prove that the IFE basis functions are unisolvent on arbitrary triangles without any angle conditions  and the IFE spaces have the optimal approximation capabilities, although the proof is challenging due to the coupling of the velocity and the pressure. The stability and the optimal error estimates of the proposed IFE method are also derived rigorously. The constants in the error estimates are shown to be independent of the interface location relative to the triangulation. Numerical examples are provided to verify the theoretical results.

\end{abstract}

\textbf{keyword}: Stokes equations, interface, immersed finite element, unfitted mesh, two-phase flow, error estimates

\textbf{AMS subject classification.} 65N15, 65N30, 65N12, 76D07

\section{Introduction}
In this paper we are interested in  designing and analyzing immersed finite element (IFE) methods for solving Stokes interface problems, also known as two-phase Stokes problems. 
Let $\Omega\subset\mathbb{R}^2$  be a bounded domain with a convex polygonal boundary $\partial\Omega$,  and $\Gamma$  be a $C^2$-smooth interface immersed in $\Omega$.  Without loss of generality, we assume that  $\Gamma$  divides $\Omega$ into two phases $\Omega^+$ and $\Omega^-$ such that $\Gamma=\partial\Omega^-$; see Figure~\ref{interfacepb} for an illustration. 
The Stokes interface problem reads: given a body force $\mathbf{f}\in L^2(\Omega)^2$ and a piecewise constant viscosity $\mu|_{\Omega^\pm}=\mu^\pm>0$,  find a velocity $\mathbf{u}$ and a pressure $p$ such that 
\begin{align}
-\nabla\cdot (2\mu\boldsymbol{\epsilon}(\mathbf{u})) +\nabla p&=\mathbf{f}  \qquad\mbox{in } \Omega^+\cup\Omega^-,\label{originalpb1}\\
\nabla\cdot \mathbf{u}&=0\qquad\mbox{in } \Omega,\label{originalpb2}\\
[\sigma(\mu,\mathbf{u},p)\mathbf{n}]_\Gamma&=\mathbf{0}\qquad\mbox{on } \Gamma,\label{jp_cond1}\\
[\mathbf{u}]_\Gamma&=\mathbf{0}\qquad\mbox{on } \Gamma,\label{jp_cond2}\\
\mathbf{u}&=\mathbf{0} \qquad\mbox{on } \partial\Omega,\label{originalpb5}
\end{align}
where  $\boldsymbol{\epsilon}(\mathbf{u})=\frac{1}{2}(\nabla \mathbf{u}+(\nabla \mathbf{u})^T)$ is the strain tensor, $\sigma(\mu,\mathbf{u},p)=2\mu \boldsymbol{\epsilon}(\mathbf{u})-p\mathbb{I}$ is  the Cauchy stress tensor,  $\mathbb{I}$ is the identity matrix, $\mathbf{n}$ is the unit normal vector of the interface  $\Gamma$ pointing toward $\Omega^+$, and $[\mathbf{v}]_\Gamma$ stands for  the jump of a vector function $\mathbf{v}$ on the interface, i.e., $[\mathbf{v}]_\Gamma:=\mathbf{v}^+|_\Gamma-\mathbf{v}^-|_\Gamma$ with $\mathbf{v}^\pm:=\mathbf{v}|_{\Omega^\pm}$. In this paper, the superscript $\pm$ means $+$ or $-$. For simplicity, the notations of the jump $[\cdot]_\Gamma$ and the superscripts $+,-$  are also used for scalar- or matrix-valued functions. 
If the restriction $(\nabla \cdot \mathbf{u})|_{\Gamma}$ makes sense, the equation (\ref{originalpb2}) provides an additional  interface jump condition
\begin{equation}\label{jp_cond3}
[\nabla \cdot \mathbf{u}]_\Gamma=0\quad\mbox{ on }\Gamma.
\end{equation}

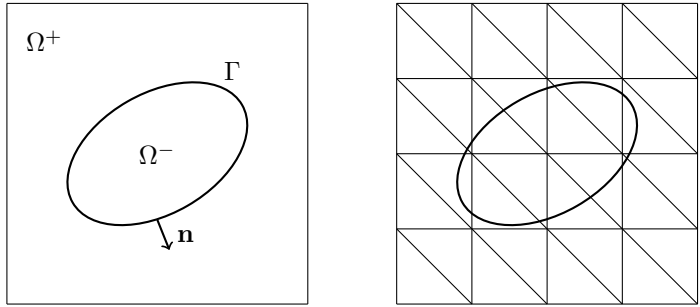
\begin{figure} [htbp]
\centering
\begin{tikzpicture}[scale=2]
\draw  (-1,-1)--(1,-1);
\draw (-1,1)--(1,1);
\draw (-1,-1)--(-1,1);
\draw (1,-1)--(1,1);
\draw[thick][rotate=120] (0,0) ellipse [x radius=0.4, y radius=0.65];
\node at (0,0) {$\Omega^-$};
\node at (-0.75,0.75) {$\Omega^+$};
\node at (0.5,0.55) {$\Gamma$};
\draw [ ->,thick] (0,-0.44)--(0+0.08,-0.44-0.2);
\node [above right] at (0.07,-0.65) {$\textbf{n}$};
\end{tikzpicture}
\qquad\quad
\begin{tikzpicture}[scale=2]
\draw  (-1,-1)--(1,-1);
\draw (-1,-0.5)--(1,-0.5);
\draw (-1,0)--(1,0);
\draw (-1,0.5)--(1,0.5);
\draw (-1,1)--(1,1);
\draw (-1,-1)--(-1,1);
\draw (-0.5,-1)--(-0.5,1);
\draw (0,-1)--(0,1);
\draw (0.5,-1)--(0.5,1);
\draw (1,-1)--(1,1);
\draw (-1,1)--(1,-1);
\draw (-0.5,1)--(1,-0.5);
\draw (0,1)--(1,0);
\draw (0.5,1)--(1,0.5);
\draw (-1,0.5)--(0.5,-1);
\draw (-1,0)--(0,-1);
\draw (-1,-0.5)--(-0.5,-1);
\draw[thick][rotate=120] (0,0) ellipse [x radius=0.4, y radius=0.65];
\end{tikzpicture}
 \caption{Left diagram: geometries of an interface problem; Right diagram: an unfitted mesh.}\label{interfacepb} 
\end{figure}

The study of the Stokes equations is motivated to solve two-phase incompressible flows,  often modeled by the Navier-Stokes equations with a discontinuous density and viscosity  across a sharp interface. The Stokes interface problem is a reasonable approximation if the inertia term is negligible. 
For interface problems, numerical methods using unfitted meshes  have attracted a lot of attention because of the relative ease of  handling  moving interfaces or complex interfaces. Unfitted meshes are generated independently of the interface,  and  can have  elements cut by the interface (called interface elements), 
which makes it challenging to design  numerical methods with  optimal convergence rates  due to the discontinuities in  the pressure and the derivatives of velocity  across the interface.

In the finite element framework, generally there are two kinds of unfitted mesh methods.  One type of the method is to enrich the traditional finite element space  by extra degrees of freedom on interface elements to capture the discontinuities (see, e.g., XFEM \cite{Fries2010}, cutFEM \cite{burman2015cutfem}, Nitsche-XFEM \cite{lehrenfeld2012nitsche},  GFEM\cite{ZHANG2020112889}).
%
%
For the Stokes interface problems, this type of methods have been developed and analyzed in \cite{hansbo2014cut, cattaneo2015stabilized,wang2015new,kirchhart2016analysis, guzman2018inf, caceres2020new,wang2019non,xiaoxiao2019stabilized}. Immersed finite element (IFE) methods \cite{li1998immersed,Li2003new} are the other type of unfitted mesh methods which modify the traditional finite element on interface elements according to interface conditions to achieve the optimal approximation capability, while keeping the degrees of freedom  unchanged. For second-order elliptic interface problems, IFE methods have been studied extensively in \cite{li2004immersed,he2012convergence,taolin2015siam,Guojcp2020,2021ji_IFE}. However, for the Stokes interface problems, 
there are much fewer works on IFE method in the literature.
 One difficulty is that the jumps of velocity and pressure are coupled  together and it is difficult to modify the velocity and the pressure finite element spaces separately. 

Although the idea of IFE methods was proposed in 1998 \cite{li1998immersed}, the first IFE method for Stokes interface problems  was developed  in 2015 by Adjerid, Chaabane, and Lin in \cite{adjerid2015immersed}, in which
 the coupling of the velocity and pressure was taken into account  in constructing  the  IFE spaces and an immersed $Q_1$-$Q_0$ discontinuous Galerkin method was proposed.  The method then was applied  to  the Stokes interface problems with moving interfaces in \cite{adjerid2019immersed},  and the idea was further developed with  immersed $CR$-$P_0$ and rotated $Q_1$-$Q_0$ elements in \cite{jones2021class}.  We also note that recently, a Taylor-Hood IFE  was constructed by a least-squares approximation in \cite{chen2021p2}. However, to the best of our knowledge, there is no theoretical analysis even for the optimal approximation capabilities of the existing IFE spaces, not mentioning the stability and the convergence of the corresponding IFE methods for Stokes interface problems. One of the major obstacles hindering the analysis is that the velocity and the pressure  are also coupled in IFE spaces.

The purposes of this paper is to provide a complete theoretical analysis of an IFE method for Stokes interface problems. We develop and analyze an IFE method based on the immersed $CR$-$P_0$ element originally proposed in \cite{jones2021class}. 
Different  from \cite{jones2021class}, we propose a new bilinear form by including additional integral terms defined on the edges cut by the interface (called interface edges) to ensure  the inf-sup stability and the optimal convergence. We show that these terms are important to prove the optimal convergence of the IFE method. In some sense, one need these terms to get an optimal error estimate on interface edges, otherwise the order of convergence is suboptimal; see the counter example in \cite{2021ji_nonconform}  for the second-order elliptic interface problems. 

Apart from the different scheme considered in this paper (compared with  \cite{jones2021class}) we mention the following other three new contributions of this paper. The first one is  about the unisolvence  (i.e., the existence and uniqueness) of the IFE basis functions. We prove the unisolvence on arbitrary triangles via a new augmented approach inspired by \cite{li2007augmented}. Note that in \cite{jones2021class} the unisolvence is only shown on isosceles right triangles by proving the invertibility of the corresponding $14\times14$ coefficient matrices. It seems that the proof is tedious and cannot be extended to arbitrary triangles. Furthermore, we also provide an explicit formula for the IFE basis functions, which is  convenient in the implementation. 
The second contribution is that we prove the optimal approximation capabilities of the IFE spaces on shape-regular triangulations, although it is challenging due to the coupling of the velocity and pressure. The proof is based on some novel auxiliary functions constructed on interface elements and a $\delta$-strip argument developed by Li et al. \cite{Li2010Optimal} for estimating errors in the region near the interface. The third contribution is the inf-sup stability result and the finite element error estimates. 
By establishing a new trace inequality for IFE functions and investigating the relations of the coupled velocity and pressure in IFE spaces, we prove that the coupled velocity and pressure IFE spaces satisfy the inf-sup condition with a  constant  independent of the meshsize and the interface location relative to the mesh.  The optimal error estimates of the proposed IFE method are also derived where the errors resulting from approximating curved interfaces by line segments are taken into consideration rigorously.  


The rest of this paper is organized as follows. In Section~\ref{preliminary}, we introduce some notations and assumptions. The IFE and corresponding IFE  method are presented in Section~\ref{sec_IFE}. Section~\ref{sec_pro_IFE} is devoted to the study of the properties of the  IFE including the unisolvence of the IFE basis functions and the optimal approximation capabilities of the IFE space. In Section~\ref{sec_error}, the stability and the optimal error estimates are proved. Section~\ref{sec_num} provides some numerical experiments. 

\section{Preliminaries and notations}\label{preliminary}
Throughout the paper we adopt the standard notation $W^k_p(\Lambda)$ for Sobolev spaces on a domain $\Lambda$ with the norm $\|\cdot\|_{W^k_p(\Lambda)}$ and the seminorm $|\cdot|_{W^k_p(\Lambda)}$. Specially, $W^k_2(\Lambda)$ is denoted by $H^{k}(\Lambda)$ with the norm  $\|\cdot\|_{H^{k}(\Lambda)}$ and the seminorm $|\cdot|_{H^{k}(\Lambda)}$. 
As usual $H_0^1(\Lambda)=\{v\in H^1(\Lambda) : v=0 \mbox{ on }\partial \Lambda\}$.
Given a domain $\Lambda$,  we define subregions
$\Lambda^\pm:=\Lambda\cap \Omega^\pm$ and  a broken space
\begin{equation*}
H^k(\Lambda^+\cup\Lambda^-):=\{v\in L^2(\Lambda) : v|_{\Lambda^\pm} \in H^k(\Lambda^\pm)\}
\end{equation*}
 equipped with the norm $\|\cdot\|_{H^k(\Lambda^+\cup\Lambda^-)}$ and the  semi-norm $|\cdot|_{H^k(\Lambda^+\cup\Lambda^-)}$ satisfying
$$
\|\cdot\|^2_{H^k(\Lambda^+\cup\Lambda^-)}=\|\cdot\|^2_{H^k(\Lambda^+)}+\|\cdot\|^2_{H^k(\Lambda^-)}, \quad|\cdot|^2_{H^k(\Lambda^+\cup\Lambda^-)}=|\cdot|^2_{H^k(\Lambda^+)}+|\cdot|^2_{H^k(\Lambda^-)}.
$$
With the usual  spaces $\mathbf{V}:=(H_0^1(\Omega))^2$  and $M:=\{q\in L^2(\Omega) : \int_\Omega q=0\}$, the weak form of the Stokes interface problem (\ref{originalpb1})-(\ref{originalpb5}) reads: find $(\mathbf{u}, p)$ in $(\mathbf{V}, M)$  such that
\begin{equation}\label{weakform}
\begin{aligned}
a(\mathbf{u}, \mathbf{v})+b(\mathbf{v}, p) &= \int_\Omega \mathbf{f}\cdot\mathbf{v}\quad &&\forall \mathbf{v}\in \mathbf{V},\\
b(\mathbf{u}, q)&=0\quad &&\forall q\in M,
\end{aligned}
\end{equation}
where
$$
a(\mathbf{u}, \mathbf{v}):=\int_\Omega2\mu\boldsymbol{\epsilon}(\mathbf{u}):\boldsymbol{\epsilon}(\mathbf{v}),\quad b(\mathbf{v}, q):=-\int_\Omega q\nabla\cdot \mathbf{v}.
$$
It is well-known that the problem (\ref{weakform}) is well-posed, that is,   there exists a unique solution $(\mathbf{u}, p)\in (\mathbf{V}, M)$ to the weak form (\ref{weakform}). For the convergence analysis we assume that the solution has a higher regularity in each sub-domain, i.e., $(\mathbf{u}, p)\in \widetilde{\boldsymbol{H}_2H_1}\cap (\mathbf{V}, M)$, where 
\begin{equation}\label{def_h2h1}
\begin{aligned}
\widetilde{\boldsymbol{H}_2H_1}:=\{(\mathbf{v}, q) ~:~  &\mathbf{v}\in (H^2(\Omega^+\cup\Omega^-))^2, ~q\in H^1(\Omega^+\cup\Omega^-), \\
&[\sigma(\mu,\mathbf{v},q)\mathbf{n}]_\Gamma=\mathbf{0}, ~[\mathbf{v}]_\Gamma=\mathbf{0},~ [\nabla\cdot \mathbf{v}]_\Gamma=0 \}.
\end{aligned}
\end{equation}

In order to solve the problem (\ref{weakform}), we consider a family of triangulations  $ \{\mathcal{T}_h\}_{h>0}$ of $\Omega$, generated independently of the interface $\Gamma$. For each element $T\in\mathcal{T}_h$, let $h_T$ denote its diameter, and define the meshsize of the triangulation $\mathcal{T}_h$ by $h=\max_{T\in\mathcal{T}_h}h_T$. We assume that $\mathcal{T}_h$ is shape-regular, i.e., for every $T$, there exists  $\varrho>0$ such that  $ h_T\leq \varrho r_T$ where $r_T$ is the diameter of the largest circle inscribed in $T$.  
Denote  $\mathcal{E}^\circ_h$ and $\mathcal{E}^b_h$ as the sets of interior and boundary edges, respectively. 
The set of all edges of the triangulation then is $\mathcal{E}_h=\mathcal{E}^\circ_h\cup \mathcal{E}^b_h$.
Since the interface $\Gamma$ is $C^2$-smooth, we can always refine the mesh near the interface to  satisfy the following assumption.
\begin{assumption}\label{assum_2}
The interface $\Gamma$ does not intersect the boundary of any element $T\in\mathcal{T}_h$ at more than two points. The interface $\Gamma$ does not intersect  the closure $\overline{e}$ for any $e\in\mathcal{E}_h$  at more than one point.
\end{assumption}

We adopt the convention that the elements $T\in\mathcal{T}_h$ and edges $e\in\mathcal{E}_h$ are open sets. The sets of interface elements and interface edges are  then defined by
\begin{equation*}
\mathcal{T}_h^\Gamma :=\{T\in\mathcal{T}_h :  T\cap \Gamma\not = \emptyset\} \quad\mbox{ and }\quad \mathcal{E}_h^\Gamma:=\{e\in \mathcal{E}_h : e \cap \Gamma\not = \emptyset\}.
\end{equation*}
The sets of non-interface elements and non-interface edges are $\mathcal{T}^{non}_h=\mathcal{T}_h\backslash\mathcal{T}_h^{\Gamma}$ and $\mathcal{E}^{non}_h=\mathcal{E}_h\backslash\mathcal{E}_h^{\Gamma}$.

On an edge $e={\rm int}(\partial T_1\cap \partial T_2)$ with $T_1, T_2\in\mathcal{T}_h$,  let $\mathbf{n}_e$ be the unit normal vector of $e$ pointing  toward $T_2$. For a piecewise smooth function $v$, we define the jump across the edge $e$ by 
$[v]_e:=v|_{T_1}-v|_{T_2}$ and the average by $\{v\}_e:=\frac{1}{2}(v|_{T_1}+v|_{T_2})$.  If $e\in\mathcal{E}^b_h$, then $\mathbf{n}_e$ is the unit normal vector of $e$ pointing  toward the outside of $\Omega$, and we define $[v]_e:=v$ and $\{v\}_e:=v$.
On a region $\Lambda$, for any $v^+\in L^1(\Lambda)$ and $v^-\in L^1(\Lambda)$, we also need the following notation 
\begin{equation*}
[\![v^\pm]\!](\mathbf{x}):=v^+(\mathbf{x})-v^-(\mathbf{x}) \quad\forall \mathbf{x}\in \Lambda.
\end{equation*}
For vector or matrix-valued functions, the notations $[\cdot]_e$, $\{\cdot\}_e$ and $[\![\cdot]\!]$ are defined analogously. Note that the difference between $[\![\cdot]\!](\mathbf{x})$ and $[\cdot]_\Gamma(\mathbf{x})$ is the range of $\mathbf{x}$.

We approximate the interface $\Gamma$ by $\Gamma_h$, which is composed of all the line segments connecting the intersection points of boundaries of interface elements and the interface.  The approximate interface $\Gamma_h$ divides $\Omega$ into two disjoint sub-domains $\Omega^+_h$ and $\Omega^-_h$ such that $\Gamma_h=\partial \Omega^-_h$.   
On each interface element $T\in\mathcal{T}_h^\Gamma$,  the discrete interface $\Gamma_h$ divides $T$ into two sub-elements:
$T^+_h:=T\cap \Omega^+_h \mbox{ and } T^-_h:=T\cap \Omega^-_h.$ For simplicity of notation, we denote 
$\Gamma_T:=\Gamma\cap T \mbox{ and } \Gamma_{h,T}:=\Gamma_h\cap T.$
Let $\textbf{n}_h(\mathbf{x})$  be the unit normal vector of $\Gamma_h$ pointing toward $\Omega^+_h$; see Figure~\ref{interface_ele} for an illustration.
The unit tangent vectors of $\Gamma_h$ and $\Gamma$ are obtained by a $90^\circ$ clockwise rotation of $\mathbf{n}_h$ and $\mathbf{n}$, i.e.,  $\mathbf{t}_h(\mathbf{x})=R_{-\pi/2}\mathbf{n}_h(\mathbf{x})$ and $\mathbf{t}(\mathbf{x})=R_{-\pi/2}\mathbf{n}(\mathbf{x})$ with  a  rotation matrix 
\begin{equation*}
R_\alpha=
\left(
\begin{aligned}
&\cos\alpha &-\sin\alpha\\
&\sin\alpha  &\cos\alpha
\end{aligned}
\right).
\end{equation*}

At the end of this section, we recall the notation $v^\pm:=v|_{\Omega^\pm}$ for a function $v$ defined on the whole domain $\Omega$.  Again the notation of the superscripts $s=+$ and $-$ may be different in the continuous and discrete cases due to some mismatched regions from the line segment approximation.  We also use  $q^\pm$ to represent $q|_{T_h^\pm}$  if no confusion can arise. Furthermore, if the function $q^s$, $s=+$ or $-$, is  a polynomial,  then the  polynomial $q^s$ is viewed as defined on the whole element $T$, unless otherwise specified. The superscripts are used for vector or matrix-valued functions similarly.

\section{The immersed $CR$-$P_0$ finite element method}\label{sec_IFE}
\subsection{The IFE space}

Let $P_k(T)$ be the set of all polynomials of degree less than or equal to $k$ on each $T\in\mathcal{T}_h$. On a non-interface element $T\in\mathcal{T}_h^{non}$, we use the standard  $CR$-$P_0$ shape function spaces \cite{crouzeix1973conforming}, i.e.,
\begin{equation*}
(\mathbf{V}_h(T), M_h(T))=(P_1(T)^2, P_0(T)). 
\end{equation*}
For every $T\in\mathcal{T}_h$, 
the local degrees of freedom are chosen as
\begin{equation}\label{loc_dof}
N_{i,T}(\mathbf{v},q):=\frac{1}{|e_i|}\int_{e_i} v_1,~N_{3+i,T}(\mathbf{v},q):=\frac{1}{|e_i|}\int_{e_i} v_2, ~i=1,2,3, 
~~~~N_7(\mathbf{v}, q):=\frac{1}{|T|}\int_Tq,
\end{equation}
where $e_i\in\mathcal{E}_h$, $i,=1,2,3$ are edges of $T$, and $v_1$ and $v_2$ are two components of $\mathbf{v}$, i.e., $\mathbf{v}=(v_1,v_2)^T$.

On an interface element $T\in\mathcal{T}_h^\Gamma$,  the shape function spaces $(\mathbf{V}_h(T), M_h(T))$ do not have the optimal approximation capabilities due to the interface jumps  (\ref{jp_cond1}), (\ref{jp_cond2}) and (\ref{jp_cond3}). The shape function spaces need to be modified according to these interface jump conditions.  
Given $\mathbf{v}^\pm\in P_1(T)^2 $ and $q^\pm\in P_0(T)$, we define the following discrete interface jump conditions 
\begin{align}
&[\![\sigma(\mu^\pm,\mathbf{v}^\pm,q^\pm)\mathbf{n}_h]\!]=\mathbf{0},\label{jp_cond_IFE1}\\
&[\![\mathbf{v}^\pm]\!]|_{\Gamma_{h,T}}=\mathbf{0}~  (\mbox{or, equivalently,}~  [\![\mathbf{v}^\pm]\!](\mathbf{x}_T)=\mathbf{0}, [\![\nabla\mathbf{v}^\pm\mathbf{t}_h]\!]=\mathbf{0}),\label{jp_cond_IFE2}\\
&[\![\nabla\cdot\mathbf{v}^\pm]\!]=0,\label{jp_cond_IFE3}
\end{align}
where $\mathbf{x}_T$ is a point on $\Gamma_{h,T}\cap \Gamma_{T}$.
The immersed $CR$-$P_0$ shape function space is then defined by (see \cite{jones2021class})
\begin{equation}\label{def_local_IFE}
\begin{aligned}
\mathbf{V}M_h^{IFE}(T)=\{(\mathbf{v},q) ~:&~ \mathbf{v}|_{T_h^\pm}=\mathbf{v}^\pm|_{T_h^\pm},~ \mathbf{v}^\pm\in P_1(T)^2,~ q|_{T_h^\pm}=q^\pm|_{T_h^\pm},~ q^\pm\in P_0(T),  \\
&~(\mathbf{v}^\pm, q^\pm)  \mbox{ satisfies (\ref{jp_cond_IFE1})-(\ref{jp_cond_IFE3})}  \}.
\end{aligned}
\end{equation}

\begin{remark}\label{remark_14by14}
Note that $\mathbf{v}^\pm$ and $q^\pm$ have fourteen parameters. It is easy to check that (\ref{jp_cond_IFE1}) provides two constraints, (\ref{jp_cond_IFE2}) provides four constraints, and (\ref{jp_cond_IFE3}) provides one constraint. Intuitively, we can expect that the functions $\mathbf{v}^\pm$ and $q^\pm$ satisfying conditions (\ref{jp_cond_IFE1})-(\ref{jp_cond_IFE3}) are  uniquely determined by the degrees of freedom  $N_{i,T}$, $i=1,...,7$ defined in (\ref{loc_dof}).   The authors in \cite{jones2021class} proved the unisolvence of IFE basis functions on isosceles right triangles. In Subsection~\ref{sec_uni}, we will prove that the unisolvence holds on arbitrary triangles without any angle conditions.
\end{remark}


The   global IFE space is  defined by
\begin{equation*}
\begin{aligned}
\mathbf{V}M_h^{IFE}=\left \{ \frac{\null}{\null} (\mathbf{v},q) ~:  \right. &~\mathbf{v}|_T\in \mathbf{V}_h(T),~ q|_T\in M_h(T)~ ~\forall T\in\mathcal{T}_h^{non},  \\
 & \left.~(\mathbf{v}|_T, q|_T)\in \mathbf{V}M_h^{IFE}(T)~~\forall T\in\mathcal{T}_h^\Gamma,~~\int_{e}[\mathbf{v}]_e=\mathbf{0} ~~\forall e\in\mathcal{E}^\circ_h \right \},
\end{aligned}
\end{equation*}
in which the velocity and pressure are  coupled. 
We also define a subspace of $\mathbf{V}M_h^{IFE}$ to take into account the boundary condition of velocity and the constraint of  pressure
\begin{equation*}
\mathbf{V}M_{h,0}^{IFE}= \left \{ (\mathbf{v},q) ~:~ (\mathbf{v},q)\in \mathbf{V}M_h^{IFE},~ \int_e\mathbf{v}=\mathbf{0} ~~\forall e\in\mathcal{E}_h^b, ~\int_\Omega q=0 \right \}.
\end{equation*}

\subsection{The IFE method}
To make the method easy for implementation,  we define a discrete viscosity by $\mu_h|_{\Omega_h^\pm}=\mu^\pm$.
In other words, the viscosity is adjusted in the mismatched small area due to the line segment approximation. 
The immersed $CR$-$P_0$ finite element method for the Stokes interface problem (\ref{originalpb1})-(\ref{originalpb5}) reads: find $(\mathbf{u}_h, p_h)\in  \mathbf{V}M_{h,0}^{IFE}$ such that 
\begin{equation}\label{IFE_method}
A_h(\mathbf{u}_h,p_h; \mathbf{v}_h,q_h )=\int_\Omega \mathbf{f}\cdot \mathbf{v}_h \qquad \forall (\mathbf{v}_h, q_h)\in  \mathbf{V}M_{h,0}^{IFE}.
\end{equation}
Here the bilinear form is defined as follows,
\begin{equation}\label{def_Ah}
\begin{aligned}
&A_h(\mathbf{u}_h, p_h; \mathbf{v}_h, q_h ):=a_h(\mathbf{u}_h,\mathbf{v}_h)+b_h(\mathbf{v}_h,p_h)-b_h(\mathbf{u}_h,q_h)+  J_h(p_h,q_h),\\
&a_h(\mathbf{u}_h,\mathbf{v}_h):=\sum_{T\in\mathcal{T}_h}\int_T2\mu_h \boldsymbol{\epsilon}(\mathbf{u}_h):\boldsymbol{\epsilon}(\mathbf{v}_h)+\sum_{e\in\mathcal{E}_h}\frac{1}{|e|}\int_e[\mathbf{u}_h]_e\cdot[\mathbf{v}_h]_e\\
&\quad-\sum_{e\in\mathcal{E}_h^\Gamma}\int_e \left(\{ 2\mu_h\boldsymbol{\epsilon}(\mathbf{u}_h)\mathbf{n}_e\}_e\cdot[\mathbf{v}_h]_e+\delta\{ 2\mu_h\boldsymbol{\epsilon}(\mathbf{v}_h)\mathbf{n}_e\}_e\cdot[\mathbf{u}_h]_e\right) +\sum_{e\in\mathcal{E}_h^\Gamma}\frac{\eta}{|e|}\int_e[\mathbf{u}_h]_e\cdot[\mathbf{v}_h]_e,\\
&b_h(\mathbf{v}_h,q_h):=-\sum_{T\in\mathcal{T}_h}\int_T q_h\nabla\cdot\mathbf{v}_h +\sum_{e\in\mathcal{E}_h^\Gamma}\int_e\{q_h\}_e[\mathbf{v}_h\cdot\mathbf{n}_e]_e,\\
&J_h(p_h,q_h):=  \sum_{e\in  \mathcal{E}_{h}^{\Gamma} }|e|\int_{e}[p_h]_e[q_h]_e,
\end{aligned}
\end{equation}
where $\delta=\pm 1$ and $\eta\geq 0$.
When the parameter $\delta=1$, the bilinear form $a_h(\cdot,\cdot)$ is symmetric and the penalty $\eta$ should be larger enough to ensure the coercivity.  When $\delta=-1$, the bilinear form $a_h(\cdot,\cdot)$ is non-symmetric. In general, we can choose an arbitrary $\eta\geq0$ to ensure the coercivity; see Lemma~\ref{lem_cor} in Section~\ref{sec_error}.

Different from the method proposed in \cite{jones2021class}, our IFE method includes additional terms on edges. We briefly discuss the roles of these terms in the method.
The second term of $a_h(\cdot,\cdot)$ is added to control the rigid body rotations so that the Korn inequality holds for the Crouzeix-Raviart finite element space. 
The integral $\int_e \{ 2\mu_h\boldsymbol{\epsilon}(\mathbf{u}_h)\mathbf{n}_e\}_e\cdot[\mathbf{v}_h]_e$ in the third term of $a_h(\cdot,\cdot)$ appears to make the method consistent on interface edges; and correspondingly the integral $\int_e \{ 2\mu_h\boldsymbol{\epsilon}(\mathbf{v}_h)\mathbf{n}_e\}_e\cdot[\mathbf{u}_h]_e$ and the fourth term are added to make the bilinear form $a_h(\cdot,\cdot)$ coercive.
We emphasize that,  different from the traditional $CR$-$P_0$ finite element method, these integral terms on interface edges cannot be neglected and are important to ensure the optimal convergence of the IFE method. The reason is similar to that of the nonconforming IFE methods for second-order elliptic interface problems \cite{2021ji_nonconform}. The second term in $b_h(\cdot,\cdot)$ is needed also for the consistency on interface edges and the penalty term $J_h(\cdot,\cdot)$ controlling the jumps of the pressure is added to make the inf-sup condition satisfied.

\section{Properties of the IFE}\label{sec_pro_IFE}
In this section, we discuss some properties of the proposed IFE. To begin with, we make some preparations. Denote $\mbox{dist}(\mathbf{x},\Gamma)$ as  the distance between a point $\mathbf{x}$ and the interface $\Gamma$, and  $U(\Gamma,\delta)=\{\mathbf{x}\in\mathbb{R}^2: \mbox{dist}(\mathbf{x},\Gamma)< \delta\}$  as the neighborhood of $\Gamma$ of thickness $\delta$. 
Define the meshsize of $\mathcal{T}_h^\Gamma$ by
$h_\Gamma:=\max_{T\in\mathcal{T}_h^\Gamma}h_T.$
It is obvious that $h_\Gamma\leq h$ and $\bigcup_{T\in\mathcal{T}_h^\Gamma} T\subset U(\Gamma,h_\Gamma)$. We need the  signed distance function  $\rho(\mathbf{x})$ defined by $\rho(\mathbf{x})|_{\Omega^\pm}=\pm \mbox{dist}(\mathbf{x},\Gamma)$.
As we assume that $\Gamma\in C^2$, there exists a constant $\delta_0>0$ such that $\rho(\mathbf{x})\in C^2(U(\Gamma,\delta_0))$ (see \cite{foote1984regularity}).  In the following analysis, we make the following assumption.
\begin{assumption}\label{assumption_delta}
We assume that $h_\Gamma<\delta_0$ so that  $T\subset U(\Gamma,\delta_0)$ for all interface elements $T\in\mathcal{T}_h^\Gamma$. 
\end{assumption}


Using the signed distance function $\rho(\mathbf{x})$, we can evaluate the unit normal and tangent vectors of the interface as $\textbf{n}(\mathbf{x})=\nabla \rho$ and $\textbf{t}(\mathbf{x}) =R_{-\pi/2}\nabla \rho$ (see \cite{foote1984regularity}). The functions $\textbf{n}(\mathbf{x})$ and $\textbf{t}(\mathbf{x})$ are well-defined in the region $U(\Gamma,\delta_0)$ and now are considered in the extended sense.
We note that the functions $\textbf{n}_h(\mathbf{x})$ and $\textbf{t}_h(\mathbf{x})$ can also be  viewed as piecewise constant vectors defined on interface elements.
Since $\Gamma$ is $C^2$-smooth, by Rolle's Theorem, there exists at least one point $\mathbf{x}^*\in\Gamma\cap T$ such that
$\textbf{n}(\mathbf{x}^*)=\textbf{n}_h(\mathbf{x}^*)$.
Since $\rho(\mathbf{x})\in C^2(U(\Gamma,\delta_0))$, we have $\textbf{n}(\mathbf{x})\in \left(C^1(\overline{T})\right)^2$.
Using  the Taylor expansion  at $\mathbf{x}^*$, we further have
\begin{equation}\label{error_nt}
\|\textbf{n}-\textbf{n}_h\|_{L^\infty(T)}\leq Ch_T,\quad \|\textbf{t}-\textbf{t}_h\|_{L^\infty(T)}=\|R_{-\pi/2}(\textbf{n}-\textbf{n}_h)\|_{L^\infty(T)}\leq Ch_T\quad \forall T\in\mathcal{T}_h^\Gamma.
\end{equation}
Define the region between the mismatched interfaces $\Gamma$ and $\Gamma_h$ as 
\begin{equation}\label{def_triangleT}
T^\triangle:=(T^-\cap T_h^+)\cup(T^+\cap  T_h^-)\qquad \forall T\in\mathcal{T}_h^\Gamma.
\end{equation}
Since $\Gamma$ is $C^2$-smooth, there exists a constant $C$ depending only on the curvature of $\Gamma$ such that 
\begin{equation}\label{triT_rela}
T^\triangle\subset U(\Gamma, Ch^2_\Gamma)\qquad \forall T\in\mathcal{T}_h^\Gamma.
\end{equation}

The following lemma presents a $\delta$-strip argument that will be used for the error estimate in the region near the interface; see Lemma 2.1 in \cite{Li2010Optimal}.
\begin{lemma}\label{strip}
Let $\delta>0$ be a sufficiently small number.  Then it holds  for any $v\in H^1(\Omega)$ that 
\begin{equation*}
\|v\|_{L^2(U(\Gamma,\delta))}\leq C\sqrt{\delta}\,  \| v\|_{H^1(\Omega)}.
\end{equation*}
Furthermore,  if $v|_{\Gamma}=0$, then there holds
\begin{equation*}
\|v\|_{L^2(U(\Gamma,\delta))}\leq C\delta \, \|\nabla v\|_{L^2(U(\Gamma,\delta))}.
\end{equation*}
\end{lemma}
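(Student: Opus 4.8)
The plan is to introduce tubular (Fermi) coordinates around $\Gamma$ using the signed distance function $\rho$, which by Assumption~\ref{assumption_delta} is $C^2$ on $U(\Gamma,\delta_0)$ with $|\nabla\rho|=1$ and $\mathbf{n}=\nabla\rho$. I would parametrize $\Gamma$ by arc length $s$ and use the map $(s,r)\mapsto \gamma(s)+r\,\mathbf{n}(\gamma(s))$; for $\delta<\delta_0$ sufficiently small this is a bi-Lipschitz diffeomorphism onto $U(\Gamma,\delta)$ whose Jacobian $J(s,r)=1-r\kappa(s)$ (with $\kappa$ the curvature of $\Gamma$) is bounded above and below by positive constants depending only on the curvature. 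I would first record these facts, so that the area element $d\mathbf{x}$ and the arc-length element of each level curve $\Gamma_r:=\{\rho=r\}$ are comparable, up to constants, to $dr\,ds$ and $ds$, respectively.

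For the first estimate, I would invoke the co-area formula for $\rho$: since $|\nabla\rho|=1$,
\[
\|v\|_{L^2(U(\Gamma,\delta))}^2=\int_{-\delta}^{\delta}\left(\int_{\Gamma_r}|v|^2\,d\sigma\right)dr .
\]
It then suffices to bound the inner integral uniformly in $r$. Because the level curves $\Gamma_r$ are uniformly (in $r$, for small $\delta$) close to $\Gamma$ in $C^1$ and compactly contained in $\Omega$, the standard trace inequality gives $\int_{\Gamma_r}|v|^2\,d\sigma\le C\|v\|_{H^1(\Omega)}^2$ with $C$ independent of $r$; this is precisely where the two-sided Jacobian control over the family of curves is used. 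Integrating over $r\in(-\delta,\delta)$ produces the factor $2\delta$ and hence $\|v\|_{L^2(U(\Gamma,\delta))}\le C\sqrt{\delta}\,\|v\|_{H^1(\Omega)}$.

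For the second estimate, assuming $v|_\Gamma=0$, I would argue along each normal ray. By a density argument it suffices to treat smooth $v$; for a fixed base point $\gamma\in\Gamma$ the fundamental theorem of calculus together with $v(\gamma)=0$ gives $v(\gamma+r\,\mathbf{n})=\int_0^r \nabla v(\gamma+\tau\,\mathbf{n})\cdot\mathbf{n}\,d\tau$, so the Cauchy--Schwarz inequality yields $|v(\gamma+r\,\mathbf{n})|^2\le \delta\int_{-\delta}^{\delta}|\nabla v(\gamma+\tau\,\mathbf{n})|^2\,d\tau$. Integrating this first in $r$ over $(-\delta,\delta)$ and then in $s$, and using the two-sided Jacobian bounds to pass between the $(s,r)$ variables and $U(\Gamma,\delta)$, I obtain $\|v\|_{L^2(U(\Gamma,\delta))}^2\le C\delta^2\|\nabla v\|_{L^2(U(\Gamma,\delta))}^2$, which is the claim after taking square roots.

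The main obstacle I anticipate is the first inequality, specifically making the trace constant on the level curves $\Gamma_r$ genuinely uniform in $r$; the clean route is to push the trace integral back onto $\Gamma$ via the foot-point map and absorb the resulting $C^1$-bounded length Jacobian, rather than reproving a trace theorem on each curve. The second inequality is more routine, the only care being the density argument that makes the pointwise manipulation valid for $H^1$ functions and the directional-derivative bound $|\nabla v\cdot\mathbf{n}|\le|\nabla v|$. Since this result is quoted from Lemma~2.1 of \cite{Li2010Optimal}, I would keep the write-up brief and lean on these standard ingredients.
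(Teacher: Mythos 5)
The paper itself offers no proof of Lemma~\ref{strip}: it is quoted directly from Lemma~2.1 of \cite{Li2010Optimal}, so the only comparison available is with the standard proof in that reference. Your tubular-coordinate argument is correct and is essentially that standard proof: the reference reduces to one-dimensional estimates in the normal variable after (locally) flattening $\Gamma$, while you organize the same idea globally through the signed distance function, the co-area formula (legitimate here since $|\nabla\rho|=1$), and a trace bound on the level curves $\Gamma_r=\{\rho=r\}$ that is uniform in $r$. Both routes rely on the same geometric input, namely the two-sided bound on the Jacobian $1-r\kappa(s)$ for $\delta$ small, which you state correctly.

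Two details deserve care in a full write-up, though neither is a genuine gap. First, for the $\sqrt{\delta}$ estimate, the uniformity of the trace constant on $\Gamma_r$ is not obtained by ``pushing the trace integral back onto $\Gamma$'': the restriction of $v$ to $\Gamma_r$ is not the trace of $v$ on $\Gamma$, so the foot-point map alone cannot transfer the inequality. The clean justifications are either (i) that the curves $\{\Gamma_r\}_{|r|\le\delta}$ have uniformly bounded $C^1$/Lipschitz character, so the trace theorem holds with one constant, or (ii) the direct one-dimensional argument: for $x=\gamma(s)+r\mathbf{n}$, bound $|v(x)|^2\le 2|v(\gamma(s)+t\mathbf{n})|^2+2\left(\int|\partial_\tau v|\,d\tau\right)^2$, average over $t$ in a band of fixed width $O(\delta_0)$, and integrate in $s$; this yields $\int_{\Gamma_r}|v|^2\,d\sigma\le C\|v\|^2_{H^1(\Omega)}$ with $C$ depending only on $\Gamma$ and $\delta_0$, not on $r$ or $\delta$. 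Second, in the Poincar\'e-type estimate, smooth approximants $v_k\to v$ in $H^1(\Omega)$ will not satisfy $v_k|_\Gamma=0$ exactly; the routine fix is to keep the boundary term in the ray-wise computation, obtaining $\|v_k\|^2_{L^2(U(\Gamma,\delta))}\le C\delta\|v_k\|^2_{L^2(\Gamma)}+C\delta^2\|\nabla v_k\|^2_{L^2(U(\Gamma,\delta))}$, and then let $k\to\infty$ using $\|v_k\|_{L^2(\Gamma)}\to\|v\|_{L^2(\Gamma)}=0$. With these two standard repairs, your proof is sound.
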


We also need the following well-known extension result \cite{Gilbargbook}.
\begin{lemma}\label{lem_ext}
Suppose that $v^\pm\in H^m(\Omega^\pm)$ with $m>0$. Then there exist extensions $ v_E^\pm \in H^m(\Omega)$ such that
\begin{equation*}
v_E^\pm|_{\Omega^\pm}=v^\pm~\mbox{ and }~\|v_E^\pm\|_{H^m(\Omega)}\leq C\|v^\pm\|_{H^m(\Omega^\pm)}
\end{equation*}
for a constant $C>0$ depending only on $\Omega^\pm$.
\end{lemma}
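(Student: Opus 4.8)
The plan is to reduce the assertion to a local extension across the smooth part of the boundary and then invoke the classical reflection construction. First I would observe that to build $v_E^\pm\in H^m(\Omega)$ it suffices to extend across the interface $\Gamma$ only: the region to be filled in for $v^+$ is $\Omega^-=\Omega\setminus\overline{\Omega^+}$, whose only relevant boundary piece interior to $\Omega$ is $\Gamma$, and symmetrically for $v^-$; the outer polygonal boundary $\partial\Omega$ is never crossed, so its corners play no role. Since $\Gamma$ is $C^2$-smooth and compact, I would cover a tubular neighbourhood of $\Gamma$ by finitely many open balls $\{B_k\}$ on each of which $\Gamma$ is the graph of a $C^2$ function after a rigid rotation, together with one interior set $B_0\subset\subset\Omega^\pm$, and fix a smooth partition of unity $\{\chi_k\}$ subordinate to this cover. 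On $B_0$ the extension is the identity, so the work is entirely in the boundary charts.

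In each boundary chart I flatten $\Gamma$ by a $C^2$ diffeomorphism $\Phi_k$ mapping $B_k\cap\overline{\Omega^\pm}$ onto a subset of the upper half-plane $\{y_2\geq 0\}$ and $\Gamma$ onto $\{y_2=0\}$, and transport $\chi_k v^\pm$ to a function $w$ on the half-plane. On the half-plane I use the higher-order reflection
\begin{equation*}
(Ew)(y_1,y_2):=\sum_{j=1}^{m}c_j\,w(y_1,-j y_2)\qquad\text{for }y_2<0,
\end{equation*}
where the coefficients $c_j$ solve the (nonsingular) Vandermonde system $\sum_{j=1}^{m}c_j(-j)^\ell=1$ for $\ell=0,1,\dots,m-1$. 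This choice forces $Ew$ and its normal derivatives up to order $m-1$ to match those of $w$ across $\{y_2=0\}$, so that the full reflected function lies in $H^m$ of the ball; moreover, because each reflected term is merely an anisotropic dilation, a change of variables gives $|Ew|_{H^i(\{y_2<0\})}\leq C\,|w|_{H^i(\{y_2>0\})}$ for each $i=0,\dots,m$ with no lower-order contamination. This single construction covers both $m=1$ (continuity matching) and $m=2$ (continuity plus first-derivative matching).

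Finally I would pull $Ew$ back by $\Phi_k^{-1}$ and sum against the partition of unity to define $v_E^\pm$ on all of $\Omega$; that $v_E^\pm|_{\Omega^\pm}=v^\pm$ is immediate since the reflection leaves the upper half-plane untouched and $\sum_k\chi_k\equiv 1$ near $\Gamma$. The seminorm estimates then follow from the chain rule together with uniform $C^2$ bounds on $\Phi_k$ and $\Phi_k^{-1}$ (available because $\Gamma$ is $C^2$ and the cover is finite), the boundedness of the cutoffs $\chi_k$ and their derivatives, and the half-plane estimate above; the resulting constant depends only on this geometry, hence only on $\Omega^\pm$. I expect the main obstacle to be the $m=2$ case: differentiating the composition $Ew\circ\Phi_k^{-1}$ twice produces, via the chain rule, terms containing first derivatives of $Ew$ multiplied by $\partial^2\Phi_k^{-1}$, so the naive bound would control $|v_E^\pm|_{H^2(\Omega)}$ only by the full norm $\|v^\pm\|_{H^2(\Omega^\pm)}$ rather than by the seminorm alone. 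Recovering the stated pure seminorm bound requires either arranging the flattening maps to be affine to leading order along $\Gamma$ or carefully absorbing these lower-order cross terms after summation over charts, and tracking this bookkeeping uniformly is the delicate part of the argument.
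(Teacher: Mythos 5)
The only part of this lemma that goes beyond the textbook extension theorem is the \emph{seminorm-to-seminorm} bound $|v_E^\pm|_{H^i(\Omega)}\leq C|v^\pm|_{H^i(\Omega^\pm)}$ for each $i=0,\dots,m$ simultaneously; your chart-plus-reflection construction is the standard proof of the \emph{full-norm} statement $\|v_E^\pm\|_{H^i(\Omega)}\leq C\|v^\pm\|_{H^i(\Omega^\pm)}$, and the obstacle you flag in your last paragraph is a genuine gap, not bookkeeping, and it is not confined to $m=2$. The structural obstruction is polynomial reproduction: any extension satisfying $|v_E|_{H^m(\Omega)}\leq C|v|_{H^m(\Omega^\pm)}$ must extend every polynomial of degree $m-1$ to \emph{itself}, since for such $v$ the right-hand side vanishes and $\Omega$ is connected. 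Your operator fails this. If the cutoffs are applied before reflection (as written: "transport $\chi_k v^\pm$"), then already constants are not reproduced, because the reflected points $\Phi_k^{-1}(y_1,-jy_2)$ of a fixed physical point differ from chart to chart, so the reflected cutoffs no longer sum to $1$; hence $|v_E|_{H^1(\Omega)}>0$ for $v\equiv\mathrm{const}$, violating the $m=1$ bound. If instead the cutoffs are applied after reflection, constants survive, but for $m=2$ an affine $P$ becomes the non-affine function $P\circ\Phi_k^{-1}$ in the chart, which the reflection does not leave invariant, so $|v_E|_{H^2(\Omega)}>0=C|v|_{H^2(\Omega^\pm)}$ for affine $v$. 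Neither of your proposed repairs works: the flattening maps cannot be made affine because $\Gamma$ is curved, and "absorbing lower-order cross terms after summation" cannot succeed for a construction that provably fails on affine inputs.

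For comparison, the paper does not prove the lemma at all; it cites the classical result in \cite{Gilbargbook}, which again is the full-norm version. The standard way to close the gap (and what a complete proof should contain) is a polynomial correction: let $\tilde E$ be \emph{any} extension operator bounded in the full $H^i$ norms for $i\leq m$ (yours, once the full-norm estimates are written out, or the cited one), let $P=Q^m v^\pm$ be the averaged Taylor polynomial of degree $m-1$ of $v^\pm$ over a fixed ball $B\subset\Omega^\pm$, and set $v_E^\pm:=\tilde E(v^\pm-P)+P$. Then for each $i\leq m$, $|v_E^\pm|_{H^i(\Omega)}\leq \|\tilde E(v^\pm-P)\|_{H^i(\Omega)}+|P|_{H^i(\Omega)}\leq C\|v^\pm-P\|_{H^i(\Omega^\pm)}+|P|_{H^i(\Omega)}$; the Dupont--Scott estimates $|v^\pm-Q^mv^\pm|_{H^j(\Omega^\pm)}\leq C|v^\pm|_{H^i(\Omega^\pm)}$ for $j\leq i\leq m$, the commutation $D^\alpha Q^mv^\pm=Q^{m-|\alpha|}D^\alpha v^\pm$, and the equivalence of all norms of polynomials on bounded domains (see \cite{brenner2008mathematical}, Chapter 4) bound both terms by $C|v^\pm|_{H^i(\Omega^\pm)}$, with $|P|_{H^m}=0$ handling the top order. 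This correction step is the missing idea; without it the proposal establishes a weaker statement than the lemma asserts.
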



Let $W(T):=\{v\in L^2(T): \int_{e_i}v, i=1,2,3 \mbox{ are well defined}\}$, where
$e_i$, $i=1,2,3$ are edges of  $T\in\mathcal{T}_h$.
 We define local interpolation operators $\pi_{h,T}^{CR}$, $\pi_{h,T}^0$ and $\Pi_{h,T}$ such that, for all $ v\in W(T)$ and for all $(\mathbf{v},q)\in (W(T)^2, L^2(T))$,
\begin{equation}\label{def_opera}
\begin{aligned}
&\pi_{h,T}^{CR}v\in P_1(T),~~ \int_{e_i}\pi_{h,T}^{CR}v=\int_{e_i}v,~~i=1,2,3,\\
&\pi_{h,T}^{0}q\in P_0(T),~~ \int_{T}\pi_{h,T}^{0}q=\int_{T}q,\\
&\Pi_{h,T}(\mathbf{v},q)\in (\mathbf{V}_h(T), M_h(T)), ~~N_{i,T}\left(\Pi_{h,T}(\mathbf{v},q)\right)=N_{i,T}(\mathbf{v},q),~~i=1,...,7.
\end{aligned}
\end{equation}
Let $\mathbf{v}=(v_1,v_2)^T$. Then we have
\begin{equation}\label{operaPI_1}
\Pi_{h,T}(\mathbf{v}, q)=(\boldsymbol{\pi}^{CR}_{h,T}\mathbf{v}, \pi_{h,T}^{0}q)~ \mbox{ with } ~\boldsymbol{\pi}^{CR}_{h,T}\mathbf{v}:=(\pi_{h,T}^{CR}v_1,\pi_{h,T}^{CR}v_2)^T.
\end{equation}

For an interface element $T\in\mathcal{T}_h^\Gamma$, define a local IFE interpolation operator $\Pi_{h,T}^{IFE}: (W(T)^2, L^2(T))\rightarrow \mathbf{V}M_h^{IFE}(T)$ such that
\begin{equation}\label{local_PIife}
N_{i,T}\left(\Pi_{h,T}^{IFE}(\mathbf{v},q)\right)=N_{i,T}(\mathbf{v},q),~~i=1,...,7,~~~\forall (\mathbf{v},q)\in (W(T)^2, L^2(T)).
\end{equation}
Now the global IFE interpolation operator $\Pi_{h}^{IFE}: (H^1(\Omega)^2,L^2(\Omega))\rightarrow \mathbf{V}M_h^{IFE}$  is defined by  
\begin{equation}\label{Pi_IFE_1}
\forall (\mathbf{v},q)\in (H^1(\Omega)^2, L^2(\Omega)),\qquad
 \left(\Pi_h^{IFE}(\mathbf{v},q)\right)|_{T}=\left\{
\begin{aligned}
&\Pi_{h,T}^{IFE}(\mathbf{v},q)\quad&&\mbox{ if } ~T\in\mathcal{T}_h^\Gamma,\\
&\Pi_{h,T} (\mathbf{v},q)&&\mbox{ if }~ T\in\mathcal{T}_h^{non}.
\end{aligned}\right.
\end{equation}
We use $\Pi_{\mathbf{v},q}^{IFE}\mathbf{v}$ and $\Pi_{\mathbf{v},q}^{IFE} q$ to represent the velocity and pressure of  $\Pi_h^{IFE}(\mathbf{v},q)$, i.e.,
\begin{equation}\label{Pi_IFE_2}
\Pi_h^{IFE}(\mathbf{v},q)=\left (\Pi_{\mathbf{v},q}^{IFE}\mathbf{v}, \Pi_{\mathbf{v},q}^{IFE} q \right ). 
\end{equation}
Note that the subscript of $\Pi_{\mathbf{v},q}^{IFE}$ means that the interpolation operator may depend not only on $\mathbf{v}$ but also on $q$ since the velocity and pressure are coupled in the IFE space; see Remark~\ref{basiseq0_affect} for details. 

We can introduce the standard CR basis functions  by
\begin{equation}\label{cr_basis}
\lambda_{i,T}\in P_1(T),~~\frac{1}{|e_j|}\int_{e_j}\lambda_{i,T}=\delta_{ij} ~(\mbox{the Kronecker function}),~~ i,j=1,2,3,
\end{equation}
and the standard  $CR$-$P_0$ finite element basis functions  by
\begin{equation}\label{crp0_basis}
(\boldsymbol{\phi}_{i,T}, \varphi_{i,T})\in \left(\mathbf{V}_h,M_h(T)\right), \quad N_{j,T}(\boldsymbol{\phi}_{i,T}, \varphi_{i,T})=\delta_{ij},\quad  \forall i,j=1,...,7,
\end{equation}
Obviously, we have 
\begin{equation}\label{crvec_basis}
\begin{aligned}
&\boldsymbol{\phi}_{i,T}=(\lambda_{i,T},0)^T,~ \boldsymbol{\phi}_{i+3,T}=(0, \lambda_{i,T})^T,~ i=1,2,3,~~~\boldsymbol{\phi}_{7,T}=\mathbf{0},\\
&\varphi_{i,T}=0, ~i=1,...,6,  ~~~\varphi_{7,T}=1.
\end{aligned}
\end{equation}

It is well-known that  the local interpolation operators $\pi_{h,T}^{CR}$, $\pi_{h,T}^0$ and $\Pi_{h,T}$ are well-defined.  However, the well-definedness of  the IFE interpolation operator $\Pi_{h,T}^{IFE}$ is not obvious. 
We need a result that the IFE shape functions in $\mathbf{V}M_h^{IFE}(T)$ can be uniquely determined by $N_{i,T}(\mathbf{v},q),~i=1,...,7$, which will be proved in the following subsection.

\subsection{The unisolvence of IFE shape functions}\label{sec_uni}
Note that for many existing IFEs developed for other interface problems,  the unisolvence of  IFE shape functions with respect to the degrees of freedom relies on the mesh assumption, i.e., the no-obtuse angle condition \cite{GuoIMA2019,2021ji_IFE,guo2020solving,ji2020IRT}.  Recently, we showed that for second-order elliptic interface problems, if integral-values on edges are used as the degrees of freedom,  then the unisolvence holds on arbitrary triangles \cite{2021ji_nonconform}.  In this paper, we are able to prove  that the unisolvence also holds on arbitrary  elements for the  immersed $CR$-$P_0$ element for Stokes interface problems as well.

Now we use a new augmented approach inspired by \cite{li2007augmented} to prove the unisolvence. Without loss of generality, we consider an interface element $T\in\mathcal{T}_h^\Gamma$ for the proof. By the definition (\ref{jp_cond_IFE1})-(\ref{def_local_IFE}), it is obvious that the space $\mathbf{V}M_h^{IFE}(T)$ is not an empty set since $(\mathbf{0},0)\in \mathbf{V}M_h^{IFE}(T)$. Given a pair of IFE functions $(\mathbf{v},q)\in \mathbf{V}M_h^{IFE}(T)$, we define $(\mathbf{v}^{J_0}, q^{J_0})$ such that 
\begin{equation}\label{def_vj0}
(\mathbf{v}^{J_0}, q^{J_0})\in (\mathbf{V}_h(T), M_h(T)),\qquad N_{i,T}(\mathbf{v}^{J_0}, q^{J_0})=N_{i,T}(\mathbf{v},q),~i=1,...,7.
\end{equation}
From (\ref{def_opera})-(\ref{operaPI_1}), we know $(\mathbf{v}^{J_0}, q^{J_0})=(\boldsymbol{\pi}^{CR}_{h,T}\mathbf{v}, \pi_{h,T}^{0}q)$.
Recalling the notation of superscripts $\pm$ described at the end of Section~\ref{preliminary},  we set $\mathbf{v}^{J_0,\pm}:=(\mathbf{v}^{J_0})^\pm$ and $q^{J_0,\pm}:=(q^{J_0})^\pm$.  
It is easy to check that 
\begin{equation}\label{vj0_jp}
[\![\sigma(1,\mathbf{v}^{J_0,\pm},q^{J_0,\pm})\mathbf{n}_h]\!]=\mathbf{0},~[\![\mathbf{v}^{J_0,\pm}]\!]|_{\Gamma_{h,T}}=\mathbf{0},~[\![\nabla\cdot\mathbf{v}^{J_0,\pm}]\!]=0.
\end{equation}
We  define  $(\mathbf{v}^{J_1}, q^{J_1})$  such that 
\begin{equation}\label{def_vj1}
\begin{aligned}
&\mathbf{v}^{J_1,\pm}:=(\mathbf{v}^{J_1})^\pm\in \mathbf{V}_h(T),~q^{J_1,\pm}:=(q^{J_1})^\pm\in M_h(T), ~ N_{i,T}(\mathbf{v}^{J_1}, q^{J_1})=0,~i=1,...,7,\\
&[\![\sigma(1,\mathbf{v}^{J_1,\pm},q^{J_1,\pm})\mathbf{n}_h]\!]=\mathbf{n}_h,~[\![\mathbf{v}^{J_1,\pm}]\!]|_{\Gamma_{h,T}}=\mathbf{0},~[\![\nabla\cdot\mathbf{v}^{J_1, \pm}]\!]=0,
\end{aligned}
\end{equation}
and $(\mathbf{v}^{J_2}, q^{J_2})$ such that 
\begin{equation}\label{def_vj2}
\begin{aligned}
&\mathbf{v}^{J_2,\pm}:=(\mathbf{v}^{J_2})^\pm\in \mathbf{V}_h(T),~q^{J_2,\pm}:=(q^{J_2})^\pm\in M_h(T), ~ N_{i,T}(\mathbf{v}^{J_2}, q^{J_2})=0,~i=1,...,7,\\
&[\![\sigma(1,\mathbf{v}^{J_2,\pm},q^{J_2,\pm})\mathbf{n}_h]\!]=\mathbf{t}_h,~[\![\mathbf{v}^{J_2,\pm}]\!]|_{\Gamma_{h,T}}=\mathbf{0},~[\![\nabla\cdot\mathbf{v}^{J_2, \pm}]\!]=0.
\end{aligned}
\end{equation}
The existence and uniqueness of $\mathbf{v}^{J_1}$ and $\mathbf{v}^{J_2}$ will be proved in Lemma~\ref{lem_vj}.
Combining (\ref{def_vj0})-(\ref{def_vj2}), we immediately have the following lemma.
\begin{lemma}
Given $(\mathbf{v},q)\in \mathbf{V}M_h^{IFE}(T)$, if we know the augmented variable 
\begin{equation*}
[\![\sigma(1,\mathbf{v}^{\pm},q^{\pm})\mathbf{n}_h]\!]=c_1\mathbf{n}_h+c_2\mathbf{t}_h,
\end{equation*}
then the pair of functions $(\mathbf{v},q)$ can be written as
\begin{equation}\label{base_decouple}
(\mathbf{v},q)=(\mathbf{v}^{J_0}+c_1\mathbf{v}^{J_1}+c_2\mathbf{v}^{J_2},q^{J_0}+c_1q^{J_1}+c_2q^{J_2}).
\end{equation}
\end{lemma}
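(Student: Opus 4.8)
The plan is to set $(\hat{\mathbf v},\hat q):=(\mathbf v,q)-c_1(\mathbf v^{J_1},q^{J_1})-c_2(\mathbf v^{J_2},q^{J_2})$ and to prove that $(\hat{\mathbf v},\hat q)=(\mathbf v^{J_0},q^{J_0})$, which is exactly the claimed decomposition (\ref{base_decouple}). The whole argument rests on the observation that the seven functionals $N_{i,T}$ in (\ref{loc_dof}) together with the three interface operators in (\ref{jp_cond_IFE1})--(\ref{jp_cond_IFE3}) are all \emph{linear} in the pair $(\mathbf v^\pm,q^\pm)$, so the defining data of $(\hat{\mathbf v},\hat q)$ can be computed term by term from the data of its three constituents.

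First I would record the data of $(\hat{\mathbf v},\hat q)$. Using $N_{i,T}(\mathbf v^{J_1},q^{J_1})=N_{i,T}(\mathbf v^{J_2},q^{J_2})=0$ from (\ref{def_vj1})--(\ref{def_vj2}) gives $N_{i,T}(\hat{\mathbf v},\hat q)=N_{i,T}(\mathbf v,q)$ for $i=1,\dots,7$; the zero velocity and zero divergence jumps listed in the three definitions give $[\![\hat{\mathbf v}^\pm]\!]|_{\Gamma_{h,T}}=\mathbf 0$ and $[\![\nabla\cdot\hat{\mathbf v}^\pm]\!]=0$; and, since $\sigma(1,\cdot,\cdot)$ is linear in its arguments, the prescribed stress jumps combine as $[\![\sigma(1,\hat{\mathbf v}^\pm,\hat q^\pm)\mathbf n_h]\!]=(c_1\mathbf n_h+c_2\mathbf t_h)-c_1\mathbf n_h-c_2\mathbf t_h=\mathbf 0$, where the first term is precisely the augmented variable of $(\mathbf v,q)$ assumed in the hypothesis. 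Thus $(\hat{\mathbf v},\hat q)$ carries the same seven degrees of freedom as $(\mathbf v,q)$ and satisfies the \emph{homogeneous} interface conditions with viscosity normalised to $1$ on both sides.

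It then remains to show that these homogeneous conditions force $(\hat{\mathbf v},\hat q)$ to reduce to the standard interface-free pair $(\mathbf v^{J_0},q^{J_0})$; this is the only nontrivial step and the point where I expect the main obstacle. Writing $\mathbf d:=\hat{\mathbf v}^+-\hat{\mathbf v}^-$, which is affine, the vanishing of $[\![\hat{\mathbf v}^\pm]\!]$ on $\Gamma_{h,T}$ (value at $\mathbf x_T$ together with zero tangential derivative, as in (\ref{jp_cond_IFE2})) yields $\mathbf d(\mathbf x)=(\mathbf n_h\cdot(\mathbf x-\mathbf x_T))\,\mathbf g$ with the constant $\mathbf g=\nabla\mathbf d\,\mathbf n_h$, so that $\nabla\mathbf d=\mathbf g\,\mathbf n_h^{T}$. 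The divergence condition gives $\nabla\cdot\mathbf d=\mathbf g\cdot\mathbf n_h=0$, hence $\mathbf g=\beta\mathbf t_h$ for some scalar $\beta$; and, recalling $\sigma(1,\cdot,\cdot)=2\boldsymbol{\epsilon}(\cdot)-(\cdot)\mathbb{I}$ so that $2[\![\boldsymbol{\epsilon}(\hat{\mathbf v}^\pm)]\!]\mathbf n_h=(\nabla\mathbf d+(\nabla\mathbf d)^T)\mathbf n_h=\mathbf g+(\mathbf g\cdot\mathbf n_h)\mathbf n_h=\beta\mathbf t_h$, the stress condition reads $\beta\mathbf t_h-s\mathbf n_h=\mathbf 0$ with $s:=[\![\hat q^\pm]\!]$. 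The orthonormality of $\{\mathbf n_h,\mathbf t_h\}$ forces $\beta=s=0$, so $\mathbf d\equiv\mathbf 0$ and $\hat q^+=\hat q^-$, i.e. $(\hat{\mathbf v},\hat q)\in(\mathbf V_h(T),M_h(T))$. Finally, the standard $CR$-$P_0$ unisolvence of the functionals $N_{i,T}$ on $(\mathbf V_h(T),M_h(T))$ (the well-definedness recalled in the text following (\ref{crvec_basis})), combined with $N_{i,T}(\hat{\mathbf v},\hat q)=N_{i,T}(\mathbf v,q)=N_{i,T}(\mathbf v^{J_0},q^{J_0})$ from (\ref{def_vj0}), gives $(\hat{\mathbf v},\hat q)=(\mathbf v^{J_0},q^{J_0})$ and hence (\ref{base_decouple}). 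This continuity computation is exactly where the normalisation $\mu\equiv1$ in the augmented variable and the independence of $\mathbf n_h,\mathbf t_h$ are indispensable, and it coincides with the $c_1=c_2=0$ case of the uniqueness to be established in Lemma~\ref{lem_vj}.
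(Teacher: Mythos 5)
Your proof is correct, and it follows the same augmented--decomposition strategy the paper intends; the difference is one of completeness rather than of route. The paper gives this lemma no proof at all (it is asserted as following ``immediately'' from (\ref{def_vj0})--(\ref{def_vj2})), and what it leaves implicit is exactly the step you work out in your third paragraph: after subtracting $c_1(\mathbf{v}^{J_1},q^{J_1})+c_2(\mathbf{v}^{J_2},q^{J_2})$ by linearity, one still has to show that a piecewise $P_1(T)^2\times P_0(T)$ pair whose $\mu\equiv 1$ stress jump, velocity jump and divergence jump all vanish is in fact a \emph{single} global polynomial pair, so that the standard $CR$-$P_0$ unisolvence of the $N_{i,T}$ identifies it with $(\mathbf{v}^{J_0},q^{J_0})$. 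In the paper this kernel-triviality is only available a posteriori, as the $\mathbf{g}=\mathbf{0}$ case of the identities (\ref{new_add2}), which are established (via the $\mathbf{n}_h$-$\mathbf{t}_h$ coordinate system and a citation to the literature) inside the proof of Lemma~\ref{lem_vj}, i.e.\ \emph{after} the present lemma. Your direct rank-one computation --- $\mathbf{d}=(\mathbf{n}_h\cdot(\mathbf{x}-\mathbf{x}_T))\,\mathbf{g}$ with $\nabla\mathbf{d}=\mathbf{g}\,\mathbf{n}_h^{T}$, then $\mathbf{g}\cdot\mathbf{n}_h=0$ from the divergence jump, then $\beta\mathbf{t}_h-s\mathbf{n}_h=\mathbf{0}$ from the stress jump --- proves the same fact self-containedly and in the correct logical order, which is a genuine gain in rigor over the paper's ``immediately.'' The one caveat, shared equally by the paper, is that the statement presupposes the existence of $(\mathbf{v}^{J_1},q^{J_1})$ and $(\mathbf{v}^{J_2},q^{J_2})$, which is only settled in Lemma~\ref{lem_vj}; your argument, like the paper's, uses only their defining properties, so this ordering issue is inherited rather than introduced.
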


 We want to find the augmented variable $(c_1,c_2)^T$ so that the original interface jump condition (\ref{jp_cond_IFE1}) is satisfied.
Substituting (\ref{base_decouple}) into (\ref{jp_cond_IFE1}), we have
\begin{equation}\label{c1c2_1}
\begin{aligned}
[\![\sigma(\mu^\pm,c_1\mathbf{v}^{J_1,\pm}+c_2\mathbf{v}^{J_2,\pm},c_1q^{J_1,\pm}+c_2q^{J_2,\pm})\mathbf{n}_h]\!]&=-[\![\sigma(\mu^\pm,\mathbf{v}^{J_0,\pm},q^{J_0,\pm})\mathbf{n}_h]\!]\\
&=-\sigma([\![\mu^\pm]\!],\mathbf{v}^{J_0},0)\mathbf{n}_h.
\end{aligned}
\end{equation}
To derive an equation for the augmented variable $(c_1,c_2)^T$ according to (\ref{c1c2_1}), we need the following lemma about the functions $(\mathbf{v}^{J_1}, q^{J_1})$ and $(\mathbf{v}^{J_2}, q^{J_2})$.
\begin{lemma}\label{lem_vj}
The functions $(\mathbf{v}^{J_1}, q^{J_1})$ and $(\mathbf{v}^{J_2}, q^{J_2})$ defined in (\ref{def_vj1}) and (\ref{def_vj2}) are unique and  can be constructed explicitly as
\begin{equation}\label{cons_j1j2}
\begin{aligned}
&\mathbf{v}^{J_1}=\mathbf{0},\quad q^{J_1}=z-\pi_{h,T}^0z, \quad\mathbf{v}^{J_2}=(w-\pi_{h,T}^{CR}w)\mathbf{t}_h,\quad q^{J_2}=0,
\end{aligned}
\end{equation}
with 
\begin{equation}\label{def_zw}
z(\mathbf{x})=\left\{
\begin{aligned}
&z^+=-1\quad &&\mbox{ if }\mathbf{x}\in T_h^+,\\
&z^-=0\quad &&\mbox{ if }\mathbf{x}\in T_h^-,
\end{aligned}\right.
\qquad
w(\mathbf{x})=\left\{
\begin{aligned}
&w^+=\mbox{dist}(\mathbf{x},\Gamma_{h,T}^{ext})\quad &&\mbox{ if }\mathbf{x}\in T_h^+,\\
&w^-=0\quad &&\mbox{ if }\mathbf{x}\in T_h^-,
\end{aligned}\right.
\end{equation}
where $\Gamma_{h,T}^{ext}$ is the straight line containing the line segment $\Gamma_{h,T}$.
\end{lemma}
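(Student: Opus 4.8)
The plan is to establish the two assertions of the lemma separately. Existence amounts to checking that the explicit pairs in (\ref{cons_j1j2}) satisfy every requirement in (\ref{def_vj1}) and (\ref{def_vj2}), which is a direct substitution. Uniqueness, by linearity of all the defining conditions, reduces to showing that the corresponding homogeneous problem (zero degrees of freedom, zero stress jump, zero velocity jump on $\Gamma_{h,T}$, zero divergence jump) admits only the trivial solution; any two admissible solutions then differ by such a homogeneous one.

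For existence I would first treat $(\mathbf{v}^{J_1},q^{J_1})=(\mathbf{0},\,z-\pi_{h,T}^0z)$. Since the velocity is zero, the stress collapses to $\sigma(1,\mathbf{0},q^{J_1,\pm})\mathbf{n}_h=-q^{J_1,\pm}\mathbf{n}_h$, so the stress jump equals $-[\![q^{J_1,\pm}]\!]\mathbf{n}_h$; because $\pi_{h,T}^0z$ is constant it drops out of the jump and $[\![q^{J_1,\pm}]\!]=[\![z^\pm]\!]=-1$, producing exactly $\mathbf{n}_h$. The velocity and divergence jumps vanish trivially, the six edge functionals vanish because $\mathbf{v}^{J_1}=\mathbf{0}$, and $N_{7,T}$ vanishes since $\pi_{h,T}^0$ preserves the mean. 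For $(\mathbf{v}^{J_2},q^{J_2})=((w-\pi_{h,T}^{CR}w)\mathbf{t}_h,\,0)$, I would set $\phi:=w-\pi_{h,T}^{CR}w$ and use that $w^+$ is the affine signed distance to $\Gamma_{h,T}$ with $\nabla w^+=\mathbf{n}_h$ whereas $w^-\equiv 0$, hence $[\![\nabla\phi^\pm]\!]=\mathbf{n}_h$. The edge functionals vanish because $\pi_{h,T}^{CR}$ preserves edge integrals; the velocity jump on $\Gamma_{h,T}$ vanishes because $w^\pm=0$ there; and since $\mathbf{t}_h\cdot\mathbf{n}_h=0$ one gets $\nabla\cdot(\phi\mathbf{t}_h)=\nabla\phi\cdot\mathbf{t}_h$, so that $[\![\nabla\cdot\mathbf{v}^{J_2,\pm}]\!]=\mathbf{n}_h\cdot\mathbf{t}_h=0$. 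Finally, the identity $\boldsymbol{\epsilon}(\phi\mathbf{t}_h)\mathbf{n}_h=\tfrac12(\nabla\phi\cdot\mathbf{n}_h)\mathbf{t}_h$ (again by orthogonality) gives a stress jump of $([\![\nabla\phi^\pm]\!]\cdot\mathbf{n}_h)\mathbf{t}_h=\mathbf{t}_h$, as required.

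For uniqueness I would argue as follows. Let $(\mathbf{v},q)$ solve the homogeneous problem. Because $[\![\mathbf{v}^\pm]\!]$ is an affine vector field vanishing on the line $\Gamma_{h,T}$, it must be a constant vector $\mathbf{a}$ times the signed distance, so $[\![\nabla\mathbf{v}^\pm]\!]=\mathbf{a}\otimes\mathbf{n}_h$. The divergence condition forces $\mathbf{a}\cdot\mathbf{n}_h=0$, i.e. $\mathbf{a}=\alpha\mathbf{t}_h$, and a short computation with the symmetric gradient then yields $[\![\sigma(1,\mathbf{v}^\pm,q^\pm)\mathbf{n}_h]\!]=\alpha\mathbf{t}_h-[\![q^\pm]\!]\mathbf{n}_h$. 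The hard part is precisely this step, where the velocity--pressure coupling of the Stokes stress must be disentangled. Since $\{\mathbf{n}_h,\mathbf{t}_h\}$ is orthonormal, the vanishing of the stress jump splits into the two independent scalar equations $\alpha=0$ and $[\![q^\pm]\!]=0$. Hence $\mathbf{v}$ reduces to a single $P_1$ field and $q$ to a single constant; the six vanishing edge averages then force $\mathbf{v}=\mathbf{0}$ by the standard unisolvence of the $CR$ element, and $N_{7,T}=0$ forces $q=0$. I expect the only delicate bookkeeping to be the ordering of the three jump conditions in this last argument---velocity continuity to fix the structure of $[\![\nabla\mathbf{v}^\pm]\!]$, incompressibility to make it tangential, and the stress balance read componentwise in the $\{\mathbf{n}_h,\mathbf{t}_h\}$ frame.
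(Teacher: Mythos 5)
Your proposal is correct and takes essentially the same approach as the paper: both arguments reduce the coupled jump conditions to scalar statements in the orthonormal $\mathbf{n}_h$-$\mathbf{t}_h$ frame, using velocity continuity to kill the tangential-derivative jumps, the divergence condition to kill the normal-normal derivative jump, and the normal and tangential components of the stress condition to fix the remaining derivative jump and the pressure jump. The only difference is organizational: the paper derives these component-wise jump identities once for general data $\mathbf{g}$ and then reads off existence and uniqueness together for $\mathbf{g}=\mathbf{n}_h$ and $\mathbf{g}=\mathbf{t}_h$, whereas you verify the explicit formulas directly and settle uniqueness separately through the homogeneous problem.
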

\begin{proof}
First we introduce the following  identities about the interface jump conditions. If $\mathbf{v}^{J,\pm}\in \mathbf{V}_h(T)$ and $q^{J,\pm} \in M_h(T)$ satisfy 
\begin{equation}\label{new_add}
[\![\sigma(1,\mathbf{v}^{J,\pm},q^{J,\pm})\mathbf{n}_h]\!]=\mathbf{g},~[\![\mathbf{v}^{J,\pm}]\!]|_{\Gamma_{h,T}}=\mathbf{0},~[\![\nabla\cdot\mathbf{v}^{J,\pm}]\!]=0,
\end{equation}
then the following identities hold
\begin{equation}\label{new_add2}
\begin{aligned}
&[\![\nabla (\mathbf{v}^{J,\pm} \cdot\mathbf{n}_h)\cdot\mathbf{n}_h]\!]=0,~[\![\nabla (\mathbf{v}^{J,\pm} \cdot\mathbf{n}_h)\cdot\mathbf{t}_h]\!]=0, \\
&[\![\nabla (\mathbf{v}^{J,\pm} \cdot\mathbf{t}_h)\cdot\mathbf{n}_h]\!]=\mathbf{g}\cdot\mathbf{t}_h,~[\![\nabla (\mathbf{v}^{J,\pm} \cdot\mathbf{t}_h)\cdot\mathbf{t}_h]\!]=0, [\![q^{J,\pm}]\!]=-\mathbf{g}\cdot\mathbf{n}_h.
\end{aligned}
\end{equation}
The second and fourth identities are direct  consequences of  $[\![\mathbf{v}^{J,\pm}]\!]|_{\Gamma_{h,T}}=\mathbf{0}$. 
The other identities can be proved easily by decomposing $\mathbf{v}^{J,\pm}$ into the normal direction $\mathbf{n}_h$ and the tangential direction $\mathbf{t}_h$, i.e., 
\begin{equation*}
\begin{aligned}
&\sigma(1,\mathbf{v}^{J,\pm},q^{J,\pm})\mathbf{n}_h=\left(2\frac{\partial (\mathbf{v}^{J,\pm}\cdot \mathbf{n}_h)}{\partial \mathbf{n}_h}-q^{J,\pm}\right)\mathbf{n}_h+ \left( \frac{\partial (\mathbf{v}^{J,\pm}\cdot \mathbf{n}_h)}{\partial \mathbf{t}_h}+\frac{\partial (\mathbf{v}^{J,\pm}\cdot \mathbf{t}_h)}{\partial \mathbf{n}_h}\right)\mathbf{t}_h,\\
&\nabla \cdot \mathbf{v}^{J,\pm}=\frac{\partial (\mathbf{v}^{J,\pm}\cdot \mathbf{n}_h)}{\partial \mathbf{n}_h}+\frac{\partial (\mathbf{v}^{J,\pm}\cdot \mathbf{t}_h)}{\partial \mathbf{t}_h},
\end{aligned}
\end{equation*}
which can also be derived  easily in a new $\mathbf{n}_h$-$\mathbf{t}_h$ coordinate system.
The detailed  proof can be found in the literature;  see, e.g., \cite{ito2006interface,tan2009immersed}.

For the function $\mathbf{v}^{J_1}$ defined in (\ref{def_vj1}), we set $\mathbf{g}=\mathbf{n}_h$ in (\ref{new_add}), then (\ref{new_add2}) becomes
\begin{equation*}
\begin{aligned}
&[\![\nabla (\mathbf{v}^{J_1,\pm} \cdot\mathbf{n}_h)\cdot\mathbf{n}_h]\!]=0,~[\![\nabla (\mathbf{v}^{J_1,\pm} \cdot\mathbf{n}_h)\cdot\mathbf{t}_h]\!]=0, \\
&[\![\nabla (\mathbf{v}^{J_1,\pm} \cdot\mathbf{t}_h)\cdot\mathbf{n}_h]\!]=0,~[\![\nabla (\mathbf{v}^{J_1,\pm} \cdot\mathbf{t}_h)\cdot\mathbf{t}_h]\!]=0, ~[\![q^{J_1,\pm}]\!]=-1,
\end{aligned}
\end{equation*}
which together with $[\![\mathbf{v}^{J_1,\pm}]\!]|_{\Gamma_{h,T}}=\mathbf{0}$, $\mathbf{v}^{J_1,\pm}\in \mathbf{V}_h(T),~q^{J_1,\pm}\in M_h(T)$ and $N_{i,T}(\mathbf{v}^{J_1}, q^{J_1})=0,~i=1,...,7$ implies that $\mathbf{v}^{J_1}$ and  $q^{J_1}$ exist uniquely and can be constructed from (\ref{cons_j1j2})-(\ref{def_zw}).
Similarly, for the function $\mathbf{v}^{J_2}$ defined in (\ref{def_vj2}),  with $\mathbf{g}=\mathbf{t}_h$, we obtain
\begin{equation*}
\begin{aligned}
&[\![\nabla (\mathbf{v}^{J_2,\pm} \cdot\mathbf{n}_h)\cdot\mathbf{n}_h]\!]=0,~[\![\nabla (\mathbf{v}^{J_2,\pm} \cdot\mathbf{n}_h)\cdot\mathbf{t}_h]\!]=0, \\
&[\![\nabla (\mathbf{v}^{J_2,\pm} \cdot\mathbf{t}_h)\cdot\mathbf{n}_h]\!]=1,~[\![\nabla (\mathbf{v}^{J_2,\pm} \cdot\mathbf{t}_h)\cdot\mathbf{t}_h]\!]=0, ~[\![q^{J_2,\pm}]\!]=0.
\end{aligned}
\end{equation*}
Using the fact $[\![\mathbf{v}^{J_2,\pm}]\!]|_{\Gamma_{h,T}}=\mathbf{0}$, $\mathbf{v}^{J_2,\pm}\in \mathbf{V}_h(T),~q^{J_2,\pm}\in M_h(T)$ and $N_{i,T}(\mathbf{v}^{J_2}, q^{J_2})=0,~i=1,...,7$, we have
\begin{equation*}
\mathbf{v}^{J_2} \cdot\mathbf{n}_h=0,~~ \mathbf{v}^{J_2} \cdot\mathbf{t}_h=w-\pi_{h,T}^{CR}w,~~q^{J_2}=0,
\end{equation*}
which completes the proof.
\end{proof}

Since $\mathbf{v}^{J_1}=\mathbf{0}$ and $q^{J_2}=0$ from (\ref{cons_j1j2}), the equation (\ref{c1c2_1}) can be simplified as   
\begin{equation}\label{c1c2_2}
[\![\sigma(\mu^\pm,c_2\mathbf{v}^{J_2,\pm},c_1q^{J_1,\pm})\mathbf{n}_h]\!]=-\sigma([\![\mu^\pm]\!],\mathbf{v}^{J_0},0)\mathbf{n}_h.
\end{equation}
By the fact $\mathbf{v}^{J_2}\cdot\mathbf{n}_h=0$ from (\ref{cons_j1j2}), the above equation (\ref{c1c2_2}) becomes
\begin{equation}\label{c1c2_3}
\left(
\begin{aligned}
&-[\![q^{J_1,\pm}]\!] & 0 \\
&0  & [\![\mu^\pm \nabla(\mathbf{v}^{J_2,\pm}\cdot\mathbf{t}_h)\cdot \mathbf{n}_h]\!] 
\end{aligned}
\right)
\left(
\begin{aligned}
c_1\\
c_2
\end{aligned}
\right)=
-\left(
\begin{aligned}
&\sigma([\![\mu^\pm]\!],\mathbf{v}^{J_0},0)\mathbf{n}_h\cdot\mathbf{n}_h\\
&\sigma([\![\mu^\pm]\!],\mathbf{v}^{J_0},0)\mathbf{n}_h\cdot\mathbf{t}_h
\end{aligned}
\right).
\end{equation}
Using (\ref{cons_j1j2}) again, we have
$-[\![q^{J_1,\pm}]\!]=1$ and 
\begin{equation*}
\begin{aligned}
[\![\mu^\pm \nabla(\mathbf{v}^{J_2,\pm}\cdot\mathbf{t}_h)\cdot \mathbf{n}_h]\!]&=[\![\mu^\pm \nabla(w-\pi_{h,T}^{CR}w)^\pm\cdot \mathbf{n}_h]\!]=[\![\mu^\pm \nabla w^\pm\cdot \mathbf{n}_h]\!]-[\![\mu^\pm]\!] \nabla\pi_{h,T}^{CR}w\cdot \mathbf{n}_h\\
&=\mu^+-(\mu^+-\mu^-)\nabla\pi_{h,T}^{CR}w\cdot \mathbf{n}_h.
\end{aligned}
\end{equation*}
Thus,  the system of linear equations (\ref{c1c2_3}) for the augmented variable $(c_1,c_2)^T$ becomes 
\begin{equation}\label{eq_c1c2}
\left(
\begin{array}{cc}
1 & 0 \\
0  & 1+(\mu^-/\mu^+-1)\nabla\pi_{h,T}^{CR}w\cdot \mathbf{n}_h
\end{array}
\right)
\left(
\begin{aligned}
c_1\\
c_2
\end{aligned}
\right)=
\left(
\begin{aligned}
\sigma(\mu^--\mu^+,\mathbf{v}^{J_0},0)\mathbf{n}_h\cdot\mathbf{n}_h\\
\sigma(\mu^-/\mu^+-1,\mathbf{v}^{J_0},0)\mathbf{n}_h\cdot\mathbf{t}_h
\end{aligned}
\right).
\end{equation}

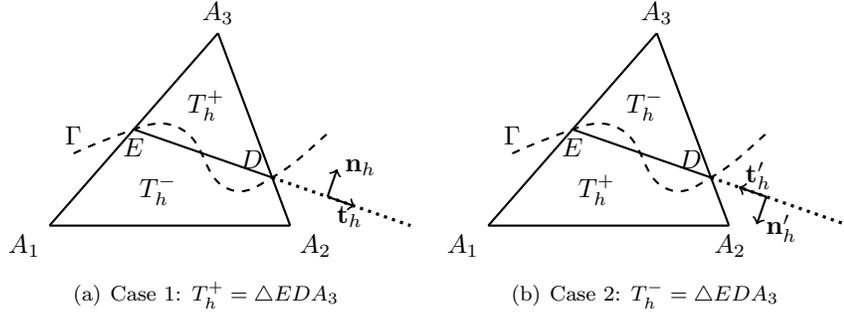
\begin{figure}[htbp]
\centering
\subfigure[Case 1: $T_h^+=\triangle EDA_3$]{ \label{interface_ele1} 
\begin{tikzpicture}[scale=1.6]
\draw [thick] (-1,-1)--(1,-1);
\draw [thick, name path=e2] (-1,-1)--(0.4,0.6);
\draw [thick, name path=e1] (0.4,0.6)--(1,-1);
\draw [thick](-1+1.4/2,-1+1.6/2)--(1-0.6/4,-1+1.6/4);
\node [below left] at (-1,-1) {$A_1$};
\node [below right ]at (1,-1) {$A_2$};
\node [above]at (0.4,0.6) {$A_3$};
\node [above left] at (1-0.6/4,-1+1.6/4) {$D$};
\node [below ] at (-1+1.4/2,-1+1.6/2) {$E$};
\draw [dashed, thick](-1+1.4/2-0.5,-1+1.6/2-0.2) to [out=25,in=180+20] (-1+1.4/2,-1+1.6/2)
to [out=20,in=150] (-1+1.4/2+0.4,-1+1.6/2)
to [out=-30,in=160] (-1+1.4/2+0.8,-1+1.6/2-0.5)
to [out=-20,in=180+30] (1-0.6/4,-1+1.6/4)
to [out=30,in=180+40] (1-0.6/4+0.5,-1+1.6/4+0.4);
\node [above] at (-1+1.4/2-0.5,-1+1.6/2-0.2) {$\Gamma$};
\node at (-1+1.4/2+0.6,-1+1.6/2+0.2) {$T_h^+$};
\node at (-1+1.4/2+0.2,-1+1.6/2-0.5) {$T_h^-$};
\draw [dotted, very thick] (0.85,-0.6)--(0.85+1.15,-0.6+-0.4);
\draw [ ->,thick] (0.85+1.15*0.4,-0.6-0.4*0.4)--(0.85+1.15*0.6,-0.6-0.4*0.6);
\draw [ ->,thick] (0.85+1.15*0.4,-0.6-0.4*0.4)--(0.85+1.15*0.4+0.4*0.2 ,-0.6-0.4*0.4 +1.15*0.2);
\node [above right] at (0.85+1.15*0.4+0.07,-0.6-0.4*0.4+0.1) {$\textbf{n}_h$};
\node [below right] at (0.85+1.15*0.4,-0.6-0.4*0.4) {$\textbf{t}_h$};
\end{tikzpicture}
}
\subfigure[Case 2: $T_h^-=\triangle EDA_3$]{ \label{interface_ele2} 
\begin{tikzpicture}[scale=1.6]
\draw [thick] (-1,-1)--(1,-1);
\draw [thick, name path=e2] (-1,-1)--(0.4,0.6);
\draw [thick, name path=e1] (0.4,0.6)--(1,-1);
\draw [thick](-1+1.4/2,-1+1.6/2)--(1-0.6/4,-1+1.6/4);
\node [below left] at (-1,-1) {$A_1$};
\node [below right ]at (0.8,-1) {$A_2$};
\node [above]at (0.4,0.6) {$A_3$};
\node [above left] at (1-0.6/4,-1+1.6/4) {$D$};
\node [below ] at (-1+1.4/2,-1+1.6/2) {$E$};
\draw [dashed, thick](-1+1.4/2-0.5,-1+1.6/2-0.2) to [out=25,in=180+20] (-1+1.4/2,-1+1.6/2)
to [out=20,in=150] (-1+1.4/2+0.4,-1+1.6/2)
to [out=-30,in=160] (-1+1.4/2+0.8,-1+1.6/2-0.5)
to [out=-20,in=180+30] (1-0.6/4,-1+1.6/4)
to [out=30,in=180+40] (1-0.6/4+0.5,-1+1.6/4+0.4);
\node [above] at (-1+1.4/2-0.5,-1+1.6/2-0.2) {$\Gamma$};
\node at (-1+1.4/2+0.6,-1+1.6/2+0.2) {$T_h^-$};
\node at (-1+1.4/2+0.2,-1+1.6/2-0.5) {$T_h^+$};
\draw [dotted, very thick] (0.85,-0.6)--(0.85+1.15,-0.6+-0.4);
\draw [ ->,thick] (0.85+1.15*0.4,-0.6-0.4*0.4)--(0.85+1.15*0.2,-0.6-0.4*0.2);
\draw [ ->,thick] (0.85+1.15*0.4,-0.6-0.4*0.4)--(0.85+1.15*0.4-0.4*0.2 ,-0.6-0.4*0.4 -1.15*0.2);
\node [above right] at (0.85+1.15*0.4-0.08,-0.6-0.4*0.4-0.45) {$\textbf{n}_h^\prime$};
\node [below right] at (0.85+1.15*0.4-0.25,-0.6+0.4*0.4+0.05) {$\textbf{t}_h^\prime$};
\end{tikzpicture}
}
\caption{Diagrams of typical interface elements. }\label{interface_ele}
\end{figure}

To prove the above system of linear equations having a unique solution, we need an estimate of $\nabla\pi_{h,T}^{CR}w\cdot \mathbf{n}_h$, which is shown in the following lemma.
\begin{lemma}\label{lem_01}
Let $T$ be an arbitrary interface triangle with an arbitrary $\Gamma_{h,T}$,  and $w$ be a piecewise linear function defined in (\ref{def_zw}). Then it holds that
\begin{equation}\label{est01}
\nabla\pi_{h,T}^{CR}w\cdot \mathbf{n}_h=\frac{|T_h^+|}{|T|}\in[0,1].
\end{equation}
\end{lemma}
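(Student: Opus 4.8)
The plan is to prove the sharper identity $\nabla\pi_{h,T}^{CR}w\cdot\mathbf{n}_h = |T_h^+|/|T|$, from which (\ref{est01}) follows immediately since $0\le |T_h^+|\le |T|$. Because $\pi_{h,T}^{CR}w\in P_1(T)$, its gradient is a constant vector, so it suffices to evaluate the average $\frac{1}{|T|}\int_T \nabla\pi_{h,T}^{CR}w\cdot\mathbf{n}_h$. The idea is to rewrite this average gradient as a boundary integral over $\partial T$ and then exploit the defining edge-average property of the $CR$ interpolant.

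Concretely, let $\boldsymbol{\nu}$ denote the outward unit normal on $\partial T$, which is constant along each edge $e_i$. Applying the divergence theorem componentwise to $\pi_{h,T}^{CR}w\in P_1(T)$ gives
$$\int_T\nabla\pi_{h,T}^{CR}w\,d\mathbf{x}=\int_{\partial T}\pi_{h,T}^{CR}w\,\boldsymbol{\nu}\,ds=\sum_{i=1}^3\boldsymbol{\nu}|_{e_i}\int_{e_i}\pi_{h,T}^{CR}w.$$
Since the interpolant matches the edge integrals of $w$ by (\ref{def_opera}), i.e. $\int_{e_i}\pi_{h,T}^{CR}w=\int_{e_i}w$ for $i=1,2,3$, the right-hand side equals $\int_{\partial T}w\,\boldsymbol{\nu}\,ds$. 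On the other hand, $w$ is a globally continuous piecewise-linear function on $T$ (it vanishes on $\Gamma_{h,T}$ from both sides), hence $w\in H^1(T)$, and the divergence theorem applied to $w$ yields $\int_{\partial T}w\,\boldsymbol{\nu}\,ds=\int_T\nabla w\,d\mathbf{x}$. Combining the two displays gives $\int_T\nabla\pi_{h,T}^{CR}w\,d\mathbf{x}=\int_T\nabla w\,d\mathbf{x}$.

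It remains to evaluate $\int_T\nabla w\cdot\mathbf{n}_h$ directly from the definition (\ref{def_zw}). On $T_h^-$ we have $w\equiv 0$, so $\nabla w=\mathbf{0}$; on $T_h^+$ the function $w(\mathbf{x})=\mbox{dist}(\mathbf{x},\Gamma_{h,T})$ equals $(\mathbf{x}-\mathbf{x}_0)\cdot\mathbf{n}_h$ for any fixed $\mathbf{x}_0\in\Gamma_{h,T}$ (because $\mathbf{n}_h$ points into $T_h^+$), so $\nabla w=\mathbf{n}_h$. Therefore $\int_T\nabla w\cdot\mathbf{n}_h=\int_{T_h^+}\mathbf{n}_h\cdot\mathbf{n}_h\,d\mathbf{x}=|T_h^+|$, and dividing by $|T|$ gives $\nabla\pi_{h,T}^{CR}w\cdot\mathbf{n}_h=|T_h^+|/|T|$, which lies in $[0,1]$.

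The only step requiring care is the reconciliation of the two divergence-theorem identities: it works precisely because the outward normal $\boldsymbol{\nu}$ is constant along each straight edge, so the scalar $CR$ edge-average condition transfers to the vector-valued boundary integral, and because $w$ is continuous across $\Gamma_{h,T}$, so that no jump or interface term arises when applying the divergence theorem to the piecewise-linear $w$. Once these two facts are in place the argument is a short, coordinate-free computation, and it has the added benefit of producing the explicit formula $\nabla\pi_{h,T}^{CR}w\cdot\mathbf{n}_h=|T_h^+|/|T|$ used in the entry of the system (\ref{eq_c1c2}).
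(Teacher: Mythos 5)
Your proof is correct, and it takes a genuinely different route from the paper's. The paper proceeds by a case analysis on which sub-element is the triangle $\triangle EDA_3$ (reducing Case 2 to Case 1 via the substitution $w'=w-\mathbf{n}_h\cdot\overrightarrow{D\mathbf{x}}$), and then computes $\nabla\pi_{h,T}^{CR}w\cdot\mathbf{n}_h$ for Case 1 explicitly through the CR basis functions $\lambda_{i,T}$, using geometric identities involving the midpoint $M_2$, its orthogonal projection $Q$, and the relation $|M_2Q|\,|e_1|=|T|$; this also lands on the value $|T_h^+|/|T|$. You instead apply the divergence theorem twice: once to $\pi_{h,T}^{CR}w\in P_1(T)$, then use the defining edge-average property $\int_{e_i}\pi_{h,T}^{CR}w=\int_{e_i}w$ together with the constancy of the outward normal on each straight edge to replace the boundary integral of the interpolant by that of $w$, and once to the continuous piecewise-linear $w$ itself, reducing everything to $\int_T\nabla w=|T_h^+|\,\mathbf{n}_h$. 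Your argument is coordinate-free, requires no case distinction (it never matters which sub-element is the triangle and which the quadrilateral), and makes transparent why the answer is exactly the area fraction $|T_h^+|/|T|$; the paper's computation, while longer, stays entirely at the level of explicit basis-function formulas, consistent with the style of the companion work it cites. The two points you flag as needing care — constancy of $\boldsymbol{\nu}$ on each edge, and continuity of $w$ across $\Gamma_{h,T}$ so the second divergence theorem produces no interface term — are indeed the only delicate steps, and both hold here.
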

\begin{proof}
Consider $T=\triangle A_1A_2A_3$ with edges $e_1=\overline{A_2A_3}$, $e_2=\overline{A_1A_3}$ and $e_3=\overline{A_1A_2}$.  Without loss of generality, we  assume that the interface $\Gamma$ cuts $e_1$ and $e_2$ at points $D$ and $E$.  
There are two cases: Case 1: $T_h^+=\triangle EDA_3$ (see Figure~\ref{interface_ele1}); Case 2: $T_h^-=\triangle EDA_3$ (see Figure~\ref{interface_ele2}). In Case 1, we have from (\ref{def_zw})  that 
\begin{equation}\label{case1}
w(\mathbf{x})=\left\{
\begin{aligned}
&\mathbf{n}_h\cdot\overrightarrow{D\mathbf{x}} \qquad&&\mbox{ if }\mathbf{x} \in \triangle EDA_3,\\
&0&&\mbox{ if } \mathbf{x} \in T\backslash\triangle EDA_3.
\end{aligned}\right.
\end{equation}
In order to distinguish between these two cases, we replace the notations $\textbf{n}_h$ and $w$ by $\textbf{n}^\prime_h$ and $w^\prime$ in Case 2. Using the fact $\textbf{n}_h^\prime=-\textbf{n}_h$, we have the following result according to (\ref{def_zw}) 
\begin{equation}\label{case2}
w^\prime(\mathbf{x})=\left\{
\begin{aligned}
&0\qquad&&\mbox{ if } \mathbf{x}\in \triangle EDA_3,\\
&-\textbf{n}_h\cdot\overrightarrow{D\mathbf{x}}   \qquad&&\mbox{ if  } \mathbf{x} \in T\backslash\triangle EDA_3.
\end{aligned}\right.
\end{equation}
Comparing  (\ref{case1}) with (\ref{case2}), we find $w^\prime=w-\mathbf{n}_h\cdot\overrightarrow{D\mathbf{x}}$,
which implies 
\begin{equation}\label{case12c}
\nabla\pi_{h,T}^{CR}w^\prime\cdot \mathbf{n}_h^\prime =\nabla\pi_{h,T}^{CR}(w-\mathbf{n}_h\cdot\overrightarrow{D\mathbf{x}}) \cdot (-\mathbf{n}_h)=1-\nabla\pi_{h,T}^{CR}w\cdot \mathbf{n}_h.
\end{equation}
If the identity~(\ref{est01}) holds for Case 1, then we can conclude from (\ref{case12c})  that the identity~(\ref{est01}) also holds for Case 2. Therefore, we just need to consider Case 1 whose geometric configuration is given in Figure~\ref{interface_ele1}. 

The proof for Case 1  is similar to that of  Lemma~3.3 in \cite{2021ji_nonconform}. By the definitions of the interpolation operator  $\pi_{h,T}^{CR}$ in (\ref{def_opera}) and the basis functions $\lambda_{i,T}$ in (\ref{cr_basis}), we have
\begin{equation}\label{pro_010}
\nabla\pi_{h,T}^{CR}w\cdot \mathbf{n}_h=\nabla\lambda_{1,T}\cdot\mathbf{n}_h\frac{1}{|e_1|}\int_{\overline{A_3D}} \mathbf{n}_h\cdot\overrightarrow{D\mathbf{x}} +\nabla\lambda_{2,T}\cdot\mathbf{n}_h\frac{1}{|e_2|}\int_{\overline{A_3E}} \mathbf{n}_h\cdot\overrightarrow{D\mathbf{x}}.
\end{equation}
Let $M_2$ be the midpoint of the edge $e_2$ and $Q$ be the orthogonal projection of $M_2$ onto the line $A_2A_3$. Then,  it holds
\begin{equation*}
\begin{aligned}
\nabla\lambda_1\cdot \textbf{n}_h&= |M_2Q|^{-1}\overrightarrow{M_2Q}|M_2Q|^{-1}\cdot \textbf{n}_h=|M_2Q|^{-1}R_{-\pi/2}\left(\overrightarrow{M_2Q}|M_2Q|^{-1}\right)\cdot R_{-\pi/2}\textbf{n}_h\\
&=|M_2Q|^{-1}|A_3D|^{-1}\overrightarrow{A_3D}\cdot \textbf{t}_h.
\end{aligned}
\end{equation*}
Note that 
\begin{equation*}
\frac{1}{|e_1|}\int_{\overline{A_3D}}\textbf{n}_h\cdot\overrightarrow{D\mathbf{x}}ds=\frac{1}{2}|e_1|^{-1}|A_3D|\textbf{n}_h\cdot\overrightarrow{DA_3}.
\end{equation*}
Therefore, it follows from the above identities and the fact $|M_2Q||e_1|=|T|$ that 
\begin{equation}\label{pro_011}
\nabla\lambda_{1,T}\cdot\mathbf{n}_h\frac{1}{|e_1|}\int_{\overline{A_3D}} \mathbf{n}_h\cdot\overrightarrow{D\mathbf{x}}=\frac{1}{2}|T|^{-1}(\textbf{n}_h\cdot\overrightarrow{DA_3})(\overrightarrow{A_3D}\cdot\mathbf{t}_h).
\end{equation}
Analogously, we have
\begin{equation}\label{pro_012}
\nabla\lambda_{2,T}\cdot\mathbf{n}_h\frac{1}{|e_2|}\int_{\overline{A_3E}} \mathbf{n}_h\cdot\overrightarrow{D\mathbf{x}}=\frac{1}{2}|T|^{-1}(\textbf{n}_h\cdot\overrightarrow{DA_3})(\overrightarrow{EA_3}\cdot\mathbf{t}_h).
\end{equation}
Substituting (\ref{pro_011}) and (\ref{pro_012}) into (\ref{pro_010}) yields 
\begin{equation}
\nabla\pi_{h,T}^{CR}w\cdot \mathbf{n}_h=\frac{1}{2}|T|^{-1}(\textbf{n}_h\cdot\overrightarrow{DA_3})(\overrightarrow{ED}\cdot\mathbf{t}_h)=\frac{1}{2}|T|^{-1}(\textbf{n}_h\cdot\overrightarrow{DA_3})|ED|=\frac{|T_h^+|}{|T|}\in [0,1],
\end{equation}
which completes the proof.
\end{proof}

Now we are ready to prove the unisolvence of IFE shape functions with respect to the degrees of freedom on arbitrary triangles.
\begin{lemma}\label{lem_IFEbasis}
For an arbitrary interface triangle $T\in\mathcal{T}_h^\Gamma$, the pair of functions $(\mathbf{v},q)\in \mathbf{V}M_h^{IFE}(T)$ is uniquely determined by $N_{i,T}(\mathbf{v},q),~i=1,...,7$. Furthermore, we have the following explicit formula
\begin{equation}\label{explicit_formula}
\begin{aligned}
&(\mathbf{v},q)=(\mathbf{v}^{J_0}, q^{J_0}) +(c_2\mathbf{v}^{J_2}, c_1q^{J_1})
\end{aligned}
\end{equation}
with 
\begin{equation}\label{ex_c1c2}
\begin{aligned}
&c_1=\sigma(\mu^--\mu^+,\mathbf{v}^{J_0},0)\mathbf{n}_h\cdot\mathbf{n}_h, ~~~c_2=\frac{\sigma(\mu^-/\mu^+-1,\mathbf{v}^{J_0},0)\mathbf{n}_h\cdot\mathbf{t}_h}{1+(\mu^-/\mu^+-1)\nabla\pi_{h,T}^{CR}w\cdot \mathbf{n}_h},\\
&\mathbf{v}^{J_0}=\sum_{i=1}^{6}N_{i,T}(\mathbf{v},q)\boldsymbol{\phi}_{i,T},~~q^{J_0}=N_{7,T}(\mathbf{v},q),
\end{aligned}
\end{equation}
where 
$\mathbf{v}^{J_2}$, $q^{J_1}$, $w$ and $\boldsymbol{\phi}_{i,T}$ are defined in (\ref{cons_j1j2}), (\ref{def_zw}) and (\ref{crp0_basis}), and $\pi_{h,T}^{CR}$ is the standard $CR$ interpolation defined in (\ref{def_opera}).
\end{lemma}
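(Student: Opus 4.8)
The plan is to reduce the unisolvence entirely to the solvability of the $2\times2$ linear system \eqref{eq_c1c2} for the augmented variable $(c_1,c_2)^T$. All of the structural work has already been done in the preceding derivation: by \eqref{base_decouple} any $(\mathbf{v},q)\in\mathbf{V}M_h^{IFE}(T)$ decomposes as $(\mathbf{v}^{J_0},q^{J_0})+(c_2\mathbf{v}^{J_2},c_1q^{J_1})$ (using $\mathbf{v}^{J_1}=\mathbf{0}$ and $q^{J_2}=0$ from Lemma~\ref{lem_vj}), and the original jump condition \eqref{jp_cond_IFE1} has been shown equivalent to \eqref{eq_c1c2}. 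Since $(\mathbf{v}^{J_0},q^{J_0})$ is fixed by the seven degrees of freedom through \eqref{def_vj0}, and since both $(\mathbf{v}^{J_1},q^{J_1})$ and $(\mathbf{v}^{J_2},q^{J_2})$ carry vanishing degrees of freedom (by their defining relations and the interpolation properties of $\pi_{h,T}^0$ and $\pi_{h,T}^{CR}$), the pair $(\mathbf{v},q)$ is determined by $N_{i,T}(\mathbf{v},q)$ exactly when $(c_1,c_2)^T$ is uniquely recoverable from \eqref{eq_c1c2}. Thus the whole lemma hinges on the invertibility of the coefficient matrix in \eqref{eq_c1c2}.

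The main step, and the only genuine obstacle, is this invertibility. The matrix is diagonal, so it suffices to show that its lower-right entry $1+(\mu^-/\mu^+-1)\,\nabla\pi_{h,T}^{CR}w\cdot\mathbf{n}_h$ does not vanish. Here I would invoke Lemma~\ref{lem_01}, which supplies $\alpha:=\nabla\pi_{h,T}^{CR}w\cdot\mathbf{n}_h\in[0,1]$. Writing $r:=\mu^-/\mu^+>0$, I rewrite the entry as $1+(r-1)\alpha=(1-\alpha)+r\alpha$, which is a convex combination of the two positive numbers $1$ and $r$ with weights $1-\alpha$ and $\alpha$ in $[0,1]$; hence it is bounded below by $\min\{1,\mu^-/\mu^+\}>0$ and is in particular strictly positive. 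This is precisely where the geometric bound of Lemma~\ref{lem_01} is indispensable: without $\alpha\le1$ the entry could be driven to zero, and the argument would collapse.

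With invertibility in hand I would finish by solving \eqref{eq_c1c2} explicitly. The first row gives $c_1=\sigma(\mu^--\mu^+,\mathbf{v}^{J_0},0)\mathbf{n}_h\cdot\mathbf{n}_h$, and dividing the second row by the nonzero entry yields the stated $c_2$ in \eqref{ex_c1c2}; combined with $\mathbf{v}^{J_0}=\sum_{i=1}^{6}N_{i,T}(\mathbf{v},q)\boldsymbol{\phi}_{i,T}$ and $q^{J_0}=N_{7,T}(\mathbf{v},q)$ read off from \eqref{crp0_basis}--\eqref{crvec_basis}, this produces the explicit formula \eqref{explicit_formula} and proves uniqueness. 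For the existence half I would run the construction in reverse: from prescribed degree-of-freedom values, form $(\mathbf{v}^{J_0},q^{J_0})$, compute $(c_1,c_2)$ from the now-solvable system, and assemble $(\mathbf{v},q)$ via \eqref{explicit_formula}. Conditions \eqref{jp_cond_IFE2}--\eqref{jp_cond_IFE3} then hold because every building block satisfies their homogeneous forms and these are linear, condition \eqref{jp_cond_IFE1} holds because $(c_1,c_2)$ solves \eqref{eq_c1c2}, and the seven degrees of freedom are reproduced because the $J_1$ and $J_2$ pieces are degree-of-freedom-free. This closes both directions and completes the proof.
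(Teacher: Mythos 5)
Your proposal is correct and follows essentially the same route as the paper: both reduce the lemma to the invertibility of the diagonal system \eqref{eq_c1c2}, invoke Lemma~\ref{lem_01} to bound the denominator $1+(\mu^-/\mu^+-1)\nabla\pi_{h,T}^{CR}w\cdot\mathbf{n}_h$ below by $\min\{1,\mu^-/\mu^+\}>0$ (your convex-combination rewriting is just a compact restatement of the paper's two-case bound \eqref{pro_basi_fenmu}), and then assemble \eqref{explicit_formula} by substituting \eqref{cons_j1j2} into \eqref{base_decouple}. Your explicit verification of the existence direction (reassembling $(\mathbf{v},q)$ from prescribed degrees of freedom and checking \eqref{jp_cond_IFE1}--\eqref{jp_cond_IFE3} by linearity) is a detail the paper leaves implicit, but it is not a different method.
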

\begin{proof}
From Lemma~\ref{lem_01}, we have
\begin{equation}\label{pro_basi_fenmu}
1+(\mu^-/\mu^+-1)\nabla\pi_{h,T}^{CR}w\cdot \mathbf{n}_h\geq\left\{
\begin{aligned}
&1\qquad&&\mbox{ if }\mu^-/\mu^+\geq 1,\\
&\mu^-/\mu^+\qquad&&\mbox{ if }0<\mu^-/\mu^+< 1.
\end{aligned}\right.
\end{equation}
Hence, the equation (\ref{eq_c1c2}) has a unique solution $(c_1,c_2)^T$ as shown in (\ref{ex_c1c2}). The proof is now completed by substituting  (\ref{cons_j1j2}) into (\ref{base_decouple}).
\end{proof}

\begin{remark}\label{IFEbasiseqbasis}
If $\mu^+=\mu^-$, then $c_1=c_2=0$. Thus, the IFE space $\mathbf{V}M_h^{IFE}$ becomes the standard $CR$-$P_0$ finite element space $(\mathbf{V}_h, M_h)$. If $|T_h^+|\rightarrow 0$ or $|T_h^-|\rightarrow 0$, from (\ref{cons_j1j2}) and  (\ref{def_zw})  we have $\mathbf{v}^{J_2}\rightarrow\mathbf{0}$ and $q^{J_1}\rightarrow0$.  In addition, using (\ref{pro_basi_fenmu}) and (\ref{ex_c1c2}) we have $|c_1|\leq C|\nabla \mathbf{v}^{J_0}|$ and $|c_1|\leq C|\nabla \mathbf{v}^{J_0}|$. Then it holds $(\mathbf{v},q)\rightarrow (\mathbf{v}^{J_0}, q^{J_0})\in (\mathbf{V}_h(T), M_h(T))$. Therefore, the IFE space $\mathbf{V}M_h^{IFE}(T)$ tends to the standard $CR$-$P_0$ finite element space $(\mathbf{V}_h(T), M_h(T))$  as $|T_h^+|\rightarrow 0$ or $|T_h^-|\rightarrow 0$.  This nice feature of the IFE space is desirable for moving interface problems. 
\end{remark}

\begin{remark}\label{basiseq0}
If $N_{i,T}(\mathbf{v},q)=0$, $i=1,...,7$, then $(\mathbf{v}^{J_0}, q^{J_0})=(\mathbf{0},0)$. From (\ref{ex_c1c2}), we also have $c_1=c_2=0$. Hence, we conclude $(\mathbf{v},q)=(\mathbf{0},0)$ when $(\mathbf{v},q)\in \mathbf{V}M_h^{IFE}(T)$ and $N_{i,T}(\mathbf{v},q)=0$, $i=1,...,7$.
\end{remark}

\begin{remark}\label{basiseq0_affect}
From (\ref{def_opera})-(\ref{operaPI_1}), we know $\mathbf{v}^{J_0}=\boldsymbol{\pi}_{h,T}^{CR}\mathbf{v}$ and $q^{J_0}=\pi_{h,T}^0q$.  Hence, the IFE interpolations of $(\mathbf{v}, q)\in (H^1(\Omega)^2,L^2(\Omega))$ on an interface element $T\in\mathcal{T}_h^\Gamma$ are
\begin{equation*}
(\Pi_{\mathbf{v},q}^{IFE}\mathbf{v})|_{T}=\boldsymbol{\pi}_{h,T}^{CR}\mathbf{v}+c_2\mathbf{v}^{J_2}\mbox{ and }(\Pi_{\mathbf{v},q}^{IFE}q)|_{T}=\pi_{h,T}^{0}q+c_1q^{J_1}
\end{equation*}
with $c_1$ and $c_2$ defined in (\ref{ex_c1c2}) that are independent of the pressure $q$. From the above identities, we find that $\Pi_{\mathbf{v},q}^{IFE}\mathbf{v} $  depends only on the velocity $\mathbf{v}$, not on the pressure $q$. However, $\Pi_{\mathbf{v},q}^{IFE}q$ depends both on $\mathbf{v}$ and $q$. 
\end{remark}

\subsection{Estimates of IFE basis functions}

For each interface element $T\in\mathcal{T}_h^\Gamma$, similar to (\ref{crp0_basis}), we define IFE basis functions by
\begin{equation}\label{def_IFEbasis}
(\boldsymbol{\phi}_{i,T}^{IFE}, \varphi_{i,T}^{IFE})\in \mathbf{V}M_h^{IFE}(T), \quad N_{j,T}(\boldsymbol{\phi}_{i,T}^{IFE}, \varphi_{i,T}^{IFE})=\delta_{ij},\quad  \forall i,j=1,...7.
\end{equation}
Using Lemma~\ref{lem_IFEbasis}, we can write these IFE basis functions $(\boldsymbol{\phi}_{i,T}^{IFE}, \varphi_{i,T}^{IFE})$  explicitly as
\begin{equation}\label{exp_IFE_basis}
\begin{aligned}
&\boldsymbol{\phi}_{i,T}^{IFE}=\boldsymbol{\phi}_{i,T}+\frac{\sigma(\mu^-/\mu^+-1,\boldsymbol{\phi}_{i,T},0)\mathbf{n}_h\cdot\mathbf{t}_h}{1+(\mu^-/\mu^+-1)\nabla\pi_{h,T}^{CR}w\cdot \mathbf{n}_h}(w-\pi_{h,T}^{CR}w)\mathbf{t}_h,~i=1,...,6,\\
&\varphi_{i,T}^{IFE}=\sigma(\mu^--\mu^+,\boldsymbol{\phi}_{i,T},0)\mathbf{n}_h\cdot\mathbf{n}_h(z-\pi_{h,T}^0z),~i=1,...,6,\\
&\boldsymbol{\phi}_{7,T}^{IFE}=\mathbf{0},~~~\varphi_{7,T}^{IFE}=1,
\end{aligned}
\end{equation}
where  $\boldsymbol{\phi}_{i,T}$, $i=1,...,6$ are the standard CR basis functions for the velocity (see (\ref{crvec_basis})), and $w$ and $z$ are known functions defined in (\ref{def_zw}). 
Also we have $(\boldsymbol{\phi}_{7,T}^{IFE}, \varphi_{7,T}^{IFE})=(\boldsymbol{\phi}_{7,T}, \varphi_{7,T})$ from (\ref{crvec_basis}).
We emphasize   that these explicit formulas for IFE basis functions  are very useful in the implementation.

From (\ref{pro_basi_fenmu}), we  highlight  that  the denominator in the IFE basis functions  (\ref{exp_IFE_basis}) does not tend to zero  even if $|T_h^+|\rightarrow 0$ or $|T_h^-|\rightarrow 0$. 


\begin{lemma}\label{lem_basisest}
There exists a positive constant $C$ depending only on $\mu^\pm$ and the shape regularity parameter $\varrho$ such that, for $m=0,1$,
\begin{equation}\label{est_IFEbsis}
\begin{aligned}
&|(\boldsymbol{\phi}_{i,T}^{IFE})^\pm|_{W_\infty^m(T)}\leq Ch_T^{-m},~~~\|(\varphi_{i,T}^{IFE})^\pm\|_{L^\infty(T)}\leq Ch_T^{-1},~~i=1,...,6,\\
&|(\boldsymbol{\phi}_{7,T}^{IFE})^\pm|_{W_\infty^m(T)}=0,~~\|(\varphi_{7,T}^{IFE})^\pm\|_{L^\infty(T)}=1.
\end{aligned}
\end{equation}
\end{lemma}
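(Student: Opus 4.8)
The plan is to estimate each ingredient appearing in the explicit formulas (\ref{exp_IFE_basis}) separately and then combine them. The case $i=7$ is immediate from $\boldsymbol{\phi}_{7,T}^{IFE}=\mathbf{0}$ and $\varphi_{7,T}^{IFE}=1$. For $i=1,\dots,6$ I would split each IFE basis function into the standard $CR$-$P_0$ part $\boldsymbol{\phi}_{i,T}$ and a correction built from the scalar coefficients appearing in (\ref{ex_c1c2}) times the functions $w-\pi_{h,T}^{CR}w$ and $z-\pi_{h,T}^0z$. The strategy is to bound, in turn, the standard part, the scalar coefficients (uniformly in the interface location), and the two correction functions in $W_\infty^m$.

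For the standard part, recall that on a shape-regular triangle the $CR$ basis satisfies $\|\lambda_{i,T}\|_{L^\infty(T)}\le 1$ and $|\lambda_{i,T}|_{W_\infty^1(T)}\le C h_T^{-1}$ with $C=C(\varrho)$; hence by (\ref{crvec_basis}) we obtain $|\boldsymbol{\phi}_{i,T}|_{W_\infty^m(T)}\le Ch_T^{-m}$ for $m=0,1$. For the correction functions I would use the explicit forms in (\ref{def_zw}). Since $w$ equals the distance to the segment $\Gamma_{h,T}$ on $T_h^+$ and vanishes on $T_h^-$, we have $0\le w\le h_T$ and $|\nabla w^\pm|\le 1$; because each edge average $\frac{1}{|e_i|}\int_{e_i}w$ lies in $[0,h_T]$, the estimates $\|w-\pi_{h,T}^{CR}w\|_{L^\infty(T)}\le Ch_T$ and $|w-\pi_{h,T}^{CR}w|_{W_\infty^1(T^\pm)}\le C$ follow from the $CR$ basis bounds. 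Likewise $z\in\{-1,0\}$ and $\pi_{h,T}^0z\in[-1,0]$ give $\|z-\pi_{h,T}^0z\|_{L^\infty(T)}\le 1$.

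The coefficients are handled next. Because $\boldsymbol{\phi}_{i,T}$ is affine, $\boldsymbol{\epsilon}(\boldsymbol{\phi}_{i,T})$ is a constant matrix with $|\boldsymbol{\epsilon}(\boldsymbol{\phi}_{i,T})|\le Ch_T^{-1}$, so the numerators $\sigma(\mu^-/\mu^+-1,\boldsymbol{\phi}_{i,T},0)\mathbf{n}_h\cdot\mathbf{t}_h$ and $\sigma(\mu^--\mu^+,\boldsymbol{\phi}_{i,T},0)\mathbf{n}_h\cdot\mathbf{n}_h$ are bounded by $Ch_T^{-1}$ with $C=C(\mu^\pm,\varrho)$. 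The key point is that the denominator is bounded below: by Lemma~\ref{lem_01} and (\ref{pro_basi_fenmu}) we have $1+(\mu^-/\mu^+-1)\nabla\pi_{h,T}^{CR}w\cdot\mathbf{n}_h\ge\min(1,\mu^-/\mu^+)>0$, a bound that is independent of how $\Gamma_{h,T}$ cuts $T$. Consequently both scalar coefficients in (\ref{exp_IFE_basis}) are bounded by $Ch_T^{-1}$ with $C$ depending only on $\mu^\pm$ and $\varrho$.

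Finally I would combine the pieces. Since each scalar coefficient and the constant vectors $\mathbf{n}_h,\mathbf{t}_h$ are constant on $T$, the velocity correction obeys $W_\infty^m$ bounds equal to the coefficient modulus times the $W_\infty^m$ bounds of $w-\pi_{h,T}^{CR}w$, giving $\le Ch_T^{-1}\cdot Ch_T=C$ for $m=0$ and $\le Ch_T^{-1}\cdot C=Ch_T^{-1}$ for $m=1$; adding $|\boldsymbol{\phi}_{i,T}|_{W_\infty^m(T)}\le Ch_T^{-m}$ yields the stated velocity estimate. The pressure estimate follows the same way from $\|\varphi_{i,T}^{IFE}\|_{L^\infty(T)}\le|c_1|\,\|z-\pi_{h,T}^0z\|_{L^\infty(T)}\le Ch_T^{-1}$. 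I expect the only delicate point to be verifying that all constants stay robust as $|T_h^+|\to0$ or $|T_h^-|\to0$; this robustness hinges entirely on the uniform lower bound of the denominator from (\ref{pro_basi_fenmu}), while the correction-function bounds are manifestly independent of the interface position.
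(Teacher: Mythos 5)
Your proposal is correct and follows essentially the same route as the paper's proof: both bound the standard $CR$ parts by $Ch_T^{-m}$, bound the numerators $\sigma(\cdot,\boldsymbol{\phi}_{i,T},0)\mathbf{n}_h\cdot\mathbf{t}_h$ and $\sigma(\cdot,\boldsymbol{\phi}_{i,T},0)\mathbf{n}_h\cdot\mathbf{n}_h$ by $Ch_T^{-1}$, control $w-\pi_{h,T}^{CR}w$ and $z-\pi_{h,T}^0z$ via the elementary estimates in (\ref{est_piw}), and invoke the uniform denominator lower bound (\ref{pro_basi_fenmu}) from Lemma~\ref{lem_01} to get interface-location-independent constants. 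The only cosmetic difference is that you bound $w-\pi_{h,T}^{CR}w$ as a single object while the paper estimates $w$ and $\pi_{h,T}^{CR}w$ separately, which is equivalent.
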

\begin{proof}
It suffices to estimate the terms on the right-hand side of (\ref{exp_IFE_basis}).
First we have the following estimates about the standard CR basis functions  
\begin{equation*}
|\lambda_{i,T}|_{W_\infty^m(T)}\leq Ch_T^{-m} ~~\mbox{ and } ~~|\boldsymbol{\phi}_{i,T}|_{W_\infty^m(T)}\leq Ch_T^{-m},\qquad m=0,1.
\end{equation*}
Using the $\mathbf{n}_h$-$\mathbf{t}_h$ coordinate system, we then have 
\begin{equation*}
\begin{aligned}
|\sigma(\mu^-/\mu^+-1,\boldsymbol{\phi}_{i,T},0)\mathbf{n}_h\cdot\mathbf{t}_h|&=|(\mu^-/\mu^+-1)(\nabla(\boldsymbol{\phi}_{i,T}\cdot\mathbf{n}_h)\cdot\mathbf{t}_h+\nabla(\boldsymbol{\phi}_{i,T}\cdot\mathbf{t}_h)\cdot\mathbf{n}_h)|\leq Ch_T^{-1},\\
|\sigma(\mu^--\mu^+,\boldsymbol{\phi}_{i,T},0)\mathbf{n}_h\cdot\mathbf{n}_h|&=|2(\mu^--\mu^+)\nabla(\boldsymbol{\phi}_{i,T}\cdot\mathbf{n}_h)\cdot\mathbf{n}_h|\leq Ch_T^{-1}.
\end{aligned}
\end{equation*}
By the definitions of $w$ and $z$ in (\ref{def_zw}), we also have 
\begin{equation}\label{est_piw}
\begin{aligned}
&|w^+|_{W_\infty^m(T)}\leq Ch_T^{1-m}, ~~|w^-|_{W_\infty^m(T)}=0, \\
&|\pi_{h,T}^{CR}w|_{W_\infty^m(T)}=\left|\sum_{i=1}^3 \lambda_{i,T}\frac{1}{|e_i|}\int_{e_i} w\right|_{W_\infty^m(T)}\leq Ch_T\sum_{i=1}^3|\lambda_{i,T}|_{W_\infty^m(T)}\leq Ch^{1-m}_T,\\
&|z^+|=1,~~ |z^-|=0, ~~|\pi_{h,T}^0z|=|T|^{-1}\left|\int_{T}z\right|\leq \|z\|_{L^\infty(T)}\leq 1.
\end{aligned}
\end{equation}
Finally, the desired  estimates (\ref{est_IFEbsis}) are obtained by substituting (\ref{pro_basi_fenmu}) and the above estimates into (\ref{exp_IFE_basis}).
\end{proof}

\subsection{Approximation capabilities of the IFE space}
For clarity, we first describe the main idea of the proof of approximation capabilities of the IFE space. Our goal is to estimate the following error on each interface element $T\in\mathcal{T}_h^\Gamma$,
\begin{equation*}
\begin{aligned}
\|(\mathbf{v}_E^\pm, q_E^\pm)- \left(\Pi_{h,T}^{IFE}(\mathbf{v}, q)\right)^\pm\|_{T},
\end{aligned}
\end{equation*}
where $\|\cdot\|_T$ is a specific norm, $\mathbf{v}_E^\pm$ and $q_E^\pm$ are extensions of $\mathbf{v}^\pm$ and $q^\pm$ as shown in Lemma~\ref{lem_ext}, and  the notation of superscripts $+$ or $-$ is described at the end of Section~\ref{preliminary}.  Obviously, the function can be split as 
\begin{equation}\label{decomp_newadd}
\begin{aligned}
(\mathbf{v}_E^\pm, q_E^\pm)- \left(\Pi_{h,T}^{IFE}(\mathbf{v}, q)\right)^\pm
=\underbrace{(\mathbf{v}_E^\pm, q_E^\pm)-\Pi_{h,T}(\mathbf{v}_E^\pm, q_E^\pm)}_{({\rm I})}+\underbrace{\left(\Pi_{h,T}(\mathbf{v}_E^\pm, q_E^\pm)- \left(\Pi_{h,T}^{IFE}(\mathbf{v}, q)\right)^\pm \right)}_{({\rm II})}.
\end{aligned}
\end{equation}
The estimate of the first term $({\rm I})$ is standard and the main difficulty is to estimate the second term $({\rm II})$. 
Noticing that functions in the term $({\rm II})$ are piecewise polynomials on the interface element $T\in\mathcal{T}_h^\Gamma$, our idea is to decompose the term $({\rm II})$ by proper degrees of freedom as shown in Lemma~\ref{lema_fenjie}. Then we estimate every terms in the decomposition  to get the desired results (see Theorem~\ref{lem_interIFE_up}). The degrees of freedom for determining the term $({\rm II})$ include $N_{j,T}$, $j=1,...,7$, and others related to the interface jumps  (\ref{jp_cond_IFE1})-(\ref{jp_cond_IFE3}), which inspire us to define the following novel auxiliary functions.

On each interface element $T\in\mathcal{T}_h^\Gamma$, we define auxiliary functions $( \boldsymbol{\Psi}_{i,T}, \psi_{i,T})$, $i=1,...,7$ with  $ \boldsymbol{\Psi}_{i,T}|_{T_h^\pm}= \boldsymbol{\Psi}_{i,T}^{\pm},~\psi_{i,T}|_{T_h^\pm}=\psi_{i,T}^{\pm}$ such that 
\begin{equation}\label{aux1}
\begin{aligned}
(\boldsymbol{\Psi}_{i,T}^{\pm}, \psi_{i,T}^{\pm})\in (\mathbf{V}_h(T), M_h(T)),~N_{j,T}( \boldsymbol{\Psi}_{i,T}, \psi_{i,T})=0,~j=1,...,7,
\end{aligned}
\end{equation}
and
\begin{equation}\label{aux2}
\begin{aligned}
&[\![\sigma(\mu^\pm, \boldsymbol{\Psi}_{1,T}^{\pm},\psi_{1,T}^{\pm})\mathbf{n}_h]\!]=\mathbf{0},~&&[\![ \boldsymbol{\Psi}_{1,T}^{\pm}]\!](\mathbf{x}_{T})=\mathbf{n}_h,~&&[\![\nabla \boldsymbol{\Psi}_{1,T}^{\pm}\mathbf{t}_h]\!]=\mathbf{0},~&&[\![\nabla\cdot \boldsymbol{\Psi}_{1,T}^{\pm}]\!]=0,\\
&[\![\sigma(\mu^\pm, \boldsymbol{\Psi}_{2,T}^{\pm},\psi_{2,T}^{\pm})\mathbf{n}_h]\!]=\mathbf{0},~&&[\![ \boldsymbol{\Psi}_{2,T}^{\pm}]\!](\mathbf{x}_{T})=\mathbf{t}_h,~&&[\![\nabla \boldsymbol{\Psi}_{2,T}^{\pm}\mathbf{t}_h]\!]=\mathbf{0},~&&[\![\nabla\cdot \boldsymbol{\Psi}_{2,T}^{\pm}]\!]=0,\\
&[\![\sigma(\mu^\pm, \boldsymbol{\Psi}_{3,T}^{\pm},\psi_{3,T}^{\pm})\mathbf{n}_h]\!]=\mathbf{n}_h,~&&[\![ \boldsymbol{\Psi}_{3,T}^{\pm}]\!](\mathbf{x}_{T})=\mathbf{0},~&&[\![\nabla \boldsymbol{\Psi}_{3,T}^{\pm}\mathbf{t}_h]\!]=\mathbf{0},~&&[\![\nabla\cdot \boldsymbol{\Psi}_{3,T}^{\pm}]\!]=0,\\
&[\![\sigma(\mu^\pm, \boldsymbol{\Psi}_{4,T}^{\pm},\psi_{4,T}^{\pm})\mathbf{n}_h]\!]=\mathbf{t}_h,~&&[\![ \boldsymbol{\Psi}_{4,T}^{\pm}]\!](\mathbf{x}_{T})=\mathbf{0},~&&[\![\nabla \boldsymbol{\Psi}_{4,T}^{\pm}\mathbf{t}_h]\!]=\mathbf{0},~&&[\![\nabla\cdot \boldsymbol{\Psi}_{4,T}^{\pm}]\!]=0,\\
&[\![\sigma(\mu^\pm, \boldsymbol{\Psi}_{5,T}^{\pm},\psi_{5,T}^{\pm})\mathbf{n}_h]\!]=\mathbf{0},~&&[\![ \boldsymbol{\Psi}_{5,T}^{\pm}]\!](\mathbf{x}_{T})=\mathbf{0},~&&[\![\nabla \boldsymbol{\Psi}_{5,T}^{\pm}\mathbf{t}_h]\!]=\mathbf{n}_h,~&&[\![\nabla\cdot \boldsymbol{\Psi}_{5,T}^{\pm}]\!]=0,\\
&[\![\sigma(\mu^\pm, \boldsymbol{\Psi}_{6,T}^{\pm},\psi_{6,T}^{\pm})\mathbf{n}_h]\!]=\mathbf{0},~&&[\![ \boldsymbol{\Psi}_{6,T}^{\pm}]\!](\mathbf{x}_{T})=\mathbf{0},~&&[\![\nabla \boldsymbol{\Psi}_{6,T}^{\pm}\mathbf{t}_h]\!]=\mathbf{t}_h,~&&[\![\nabla\cdot \boldsymbol{\Psi}_{6,T}^{\pm}]\!]=0,\\
&[\![\sigma(\mu^\pm, \boldsymbol{\Psi}_{7,T}^{\pm},\psi_{7,T}^{\pm})\mathbf{n}_h]\!]=\mathbf{0},~&&[\![ \boldsymbol{\Psi}_{7,T}^{\pm}]\!](\mathbf{x}_{T})=\mathbf{0},~&&[\![\nabla \boldsymbol{\Psi}_{7,T}^{\pm}\mathbf{t}_h]\!]=\mathbf{0},~&&[\![\nabla\cdot \boldsymbol{\Psi}_{7,T}^{\pm}]\!]=1,
\end{aligned}
\end{equation}
where $\mathbf{x}_T$ is the same as that in (\ref{jp_cond_IFE2}).
\begin{lemma}\label{lem_auxest}
On each interface element $T\in\mathcal{T}_h^\Gamma$, these auxiliary functions $( \boldsymbol{\Psi}_{i,T}, \psi_{i,T})$, $i=1,...,7$  defined in (\ref{aux1})-(\ref{aux2}) exist uniquely and satisfy,  for $m=0,1$,
\begin{equation}\label{aux_esti}
|\boldsymbol{\Psi}_{i,T}^{\pm}|_{W_\infty^m(T)} \left\{
\begin{aligned}
&\leq Ch_T^{-m}~~ &&\mbox{ if } ~ i=1,2,\\
&=0 &&\mbox{ if } ~ i=3,\\
&\leq Ch_T^{1-m} &&\mbox{ if } ~  i=4,...,7,
\end{aligned}\right. ~~
\|\psi_{i,T}^{\pm}\|_{L^\infty(T)} \left\{
\begin{aligned}
&\leq Ch_T^{-1}~~ &&\mbox{ if } ~ i=1,2,\\
&\leq C &&\mbox{ if } ~  i=3,...,7,
\end{aligned}\right.
\end{equation}
where the constant $C$ depends only on $\mu^\pm$ and the shape regularity parameter $\varrho$.
\end{lemma}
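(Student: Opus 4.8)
The plan is to reuse the $\mathbf{n}_h$--$\mathbf{t}_h$ decomposition and explicit-construction strategy of Lemmas~\ref{lem_vj} and~\ref{lem_IFEbasis}, adapted to the richer jump data in (\ref{aux2}). First I would write each auxiliary velocity in the moving frame, $\boldsymbol{\Psi}_{i,T}^\pm=u_n^\pm\mathbf{n}_h+u_t^\pm\mathbf{t}_h$ with scalar linear $u_n^\pm,u_t^\pm$, and use the stress and divergence decompositions from the proof of Lemma~\ref{lem_vj} to turn the four vector conditions of (\ref{aux2}) into scalar ones (writing $\partial_n,\partial_t$ for derivatives along $\mathbf{n}_h,\mathbf{t}_h$): a normal stress relation $[\![2\mu^\pm\partial_n u_n^\pm-\psi^\pm]\!]=\mathbf{g}\cdot\mathbf{n}_h$, a tangential stress relation $[\![\mu^\pm(\partial_t u_n^\pm+\partial_n u_t^\pm)]\!]=\mathbf{g}\cdot\mathbf{t}_h$, a divergence relation, and the prescribed point-value and tangential-derivative jumps of $u_n,u_t$. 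Since $\mathbf{n}_h,\mathbf{t}_h$ are constant, the map $(u_n,u_t)\mapsto(v_1,v_2)$ is a fixed rotation, so the seven conditions $N_{j,T}(\boldsymbol{\Psi}_{i,T},\psi_{i,T})=0$ split into $\int_{e_j}u_n=\int_{e_j}u_t=0$ $(j=1,2,3)$ together with $\int_T\psi=0$.

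Then I would solve the scalar problems in cascade. The divergence jump together with $[\![\nabla\boldsymbol{\Psi}_{i,T}^\pm\mathbf{t}_h]\!]$ and $[\![\boldsymbol{\Psi}_{i,T}^\pm]\!](\mathbf{x}_T)$ fixes the jump $u_n^+-u_n^-$ as an explicit linear function; enforcing the three edge-integral conditions then yields $u_n^\pm$ in the closed form $\tilde u_n-\pi_{h,T}^{CR}\tilde u_n$, where $\tilde u_n$ is a one-sided function assembled from the distance function $w$, the characteristic-type function $z$ of (\ref{def_zw}), and the tangential coordinate along $\Gamma_{h,T}$. The normal stress relation gives $[\![\psi^\pm]\!]$ explicitly, and $\int_T\psi=0$ produces $\psi_{i,T}$ as $\tilde\psi-\pi_{h,T}^0\tilde\psi$ built from $z$. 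Finally the tangential stress relation $[\![\mu^\pm\partial_n u_t^\pm]\!]=\mathbf{g}\cdot\mathbf{t}_h-[\![\mu^\pm\partial_t u_n^\pm]\!]$, with $u_n$ now known, determines the coefficient of the $w-\pi_{h,T}^{CR}w$ part of $u_t$ through a scalar equation whose coefficient is $\mu^+\bigl(1+(\mu^-/\mu^+-1)\nabla\pi_{h,T}^{CR}w\cdot\mathbf{n}_h\bigr)$, exactly the factor appearing in (\ref{eq_c1c2}); the remaining data of $u_t$ are carried by continuous one-sided functions.

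Existence and uniqueness follow immediately: by Lemma~\ref{lem_01} this coefficient is bounded below by $\mu^+\min(1,\mu^-/\mu^+)>0$ (cf. (\ref{pro_basi_fenmu})), uniformly in how $\Gamma_{h,T}$ cuts $T$, so every scalar subproblem is uniquely solvable and the explicit formulas are well defined. For the bounds (\ref{aux_esti}) I would insert the closed forms into $|\cdot|_{W_\infty^m(T)}$ and $|\cdot|_{L^\infty(T)}$ and invoke the estimates on $w,z,\pi_{h,T}^{CR}w,\pi_{h,T}^0z$ and the $CR$ basis from (\ref{est_piw}) and Lemma~\ref{lem_basisest}, together with the uniform lower bound above. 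The three regimes read off from the dominant building block: an $O(1)$ value jump $(i=1,2)$ is carried by $z-\pi_{h,T}^{CR}z$-type functions with $O(1/h_T)$ slopes, and the $\mu$-weighted stress coupling forces the $w$-coefficient up to $O(1/h_T)$ as well, giving $h_T^{-m}$ for the velocity and at most $h_T^{-1}$ for the pressure; the pure-pressure mode $(i=3)$ has $\boldsymbol{\Psi}_{3,T}\equiv\mathbf{0}$ since vanishing value and tangential-derivative jumps force the velocity jump to vanish and the edge conditions then kill it; and the modes $(i=4,\dots,7)$, driven by derivative- or divergence-jump data, inherit the $O(h_T)$ size and $O(1)$ gradient of $w-\pi_{h,T}^{CR}w$, giving $h_T^{1-m}$.

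The main obstacle is the last step, the tangential stress coupling: because $\mu$ jumps across $\Gamma_{h,T}$, one cannot read off $[\![\partial_n u_t^\pm]\!]$ directly, and the solvability of the resulting scalar equation must be established with a bound independent of the interface position. This is precisely where the uniform estimate $0\le\nabla\pi_{h,T}^{CR}w\cdot\mathbf{n}_h\le1$ of Lemma~\ref{lem_01} is essential: it keeps the coefficient away from zero even as $|T_h^+|\to0$ or $|T_h^-|\to0$, and thereby also makes the constant $C$ in (\ref{aux_esti}) independent of the interface location relative to the mesh.
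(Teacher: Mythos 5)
Your proposal is correct, but it takes a genuinely different route from the paper's proof. The paper disposes of existence and uniqueness in one line, by noting that the $14\times 14$ coefficient matrix of the system (\ref{aux1})--(\ref{aux2}) is the same as the one already inverted for the IFE shape functions (Lemma~\ref{lem_IFEbasis}); for the bounds it constructs one-sided data functions $(\mathbf{v}_i,q_i)$ in (\ref{def_vq_aux}) --- supported on $T_h^+$ and prescribed in the $\mathbf{n}_h$-$\mathbf{t}_h$ frame so as to carry exactly the jump data of (\ref{aux2}) --- and then uses the representation $(\boldsymbol{\Psi}_{i,T},\psi_{i,T})=(\mathbf{v}_i,q_i)-\Pi_{h,T}^{IFE}(\mathbf{v}_i,q_i)$ of (\ref{psi_decomp_N}): subtracting an IFE function leaves all prescribed jumps untouched (IFE functions have homogeneous jumps) while the interpolant annihilates the degrees of freedom, so (\ref{aux_esti}) follows from bounds on $(\mathbf{v}_i,q_i)$, on their $N_{j,T}$-values, and on the IFE basis functions (Lemma~\ref{lem_basisest}). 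You instead solve the defining linear system outright by triangular elimination in the rotated frame: the jump of $u_n$ is fully prescribed (value, tangential derivative, and --- via the divergence condition --- normal derivative), so $u_n$ is fixed by the CR edge conditions; the normal-stress relation then yields $[\![\psi^\pm]\!]$ and hence $\psi$; and only the tangential-stress step requires inverting the scalar coefficient $\mu^+\bigl(1+(\mu^-/\mu^+-1)\nabla\pi_{h,T}^{CR}w\cdot \mathbf{n}_h\bigr)$, which Lemma~\ref{lem_01} keeps uniformly away from zero. I checked your solvability count and the three size regimes; they are right. What your route buys is explicit closed forms for all seven auxiliary pairs, independence from Lemma~\ref{lem_basisest}, and occasionally sharper conclusions (e.g.\ $\psi_{2,T}$ and $\psi_{4,T}$ vanish identically, which the paper's bounds do not reveal); what it costs is a case-by-case computation for $i=1,\dots,7$ that essentially re-derives, in this richer setting, the coupling analysis the paper inherits wholesale from the unisolvence machinery. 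Both arguments rest on the same foundation --- the uniform lower bound (\ref{pro_basi_fenmu}) furnished by Lemma~\ref{lem_01} --- yours invoking it directly, the paper's through Lemmas~\ref{lem_IFEbasis} and \ref{lem_basisest}.
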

\begin{proof}
The justification of the existence and uniqueness  is that the coefficient matrix is the same as that for determining the IFE shape functions in the space $\mathbf{V}M_h^{IFE}(T)$  if we write a 14-by-14 linear system of equations for the fourteen parameters (see Remark~\ref{remark_14by14}).  To derive the estimates (\ref{aux_esti}), we need explicit expressions of these auxiliary functions. First, we define $(\mathbf{v}^-_i, q^-_i)=(\mathbf{0}, 0)$ and  $(\mathbf{v}^+_i, q^+_i)\in (\mathbf{V}_h(T), M_h(T))$, $i=1,...,7$ such that
\begin{equation}\label{def_vq_aux}
\begin{aligned}
&\mathbf{v}^+_1(\mathbf{x}_T)=\mathbf{n}_h, \frac{\partial (\mathbf{v}^+_1\cdot\mathbf{n}_h)}{\partial \mathbf{n}_h}=0, \frac{\partial (\mathbf{v}^+_1\cdot\mathbf{n}_h)}{\partial \mathbf{t}_h}=0, \frac{\partial (\mathbf{v}^+_1\cdot\mathbf{t}_h)}{\partial \mathbf{n}_h}=0, \frac{\partial (\mathbf{v}^+_1\cdot\mathbf{t}_h)}{\partial \mathbf{t}_h}=0, q^+_1=0,\\
&\mathbf{v}^+_2(\mathbf{x}_T)=\mathbf{t}_h, \frac{\partial (\mathbf{v}^+_2\cdot\mathbf{n}_h)}{\partial \mathbf{n}_h}=0, \frac{\partial (\mathbf{v}^+_2\cdot\mathbf{n}_h)}{\partial \mathbf{t}_h}=0, \frac{\partial (\mathbf{v}^+_2\cdot\mathbf{t}_h)}{\partial \mathbf{n}_h}=0, \frac{\partial (\mathbf{v}^+_2\cdot\mathbf{t}_h)}{\partial \mathbf{t}_h}=0, q^+_2=0,\\
&\mathbf{v}^+_3(\mathbf{x}_T)=\mathbf{0}, \frac{\partial (\mathbf{v}^+_3\cdot\mathbf{n}_h)}{\partial \mathbf{n}_h}=0, \frac{\partial (\mathbf{v}^+_3\cdot\mathbf{n}_h)}{\partial \mathbf{t}_h}=0, \frac{\partial (\mathbf{v}^+_3\cdot\mathbf{t}_h)}{\partial \mathbf{n}_h}=0, \frac{\partial (\mathbf{v}^+_3\cdot\mathbf{t}_h)}{\partial \mathbf{t}_h}=0, q^+_3=-1,\\
&\mathbf{v}^+_4(\mathbf{x}_T)=\mathbf{0}, \frac{\partial (\mathbf{v}^+_4\cdot\mathbf{n}_h)}{\partial \mathbf{n}_h}=0, \frac{\partial (\mathbf{v}^+_4\cdot\mathbf{n}_h)}{\partial \mathbf{t}_h}=0, \frac{\partial (\mathbf{v}^+_4\cdot\mathbf{t}_h)}{\partial \mathbf{n}_h}=\frac{1}{\mu^+}, \frac{\partial (\mathbf{v}^+_4\cdot\mathbf{t}_h)}{\partial \mathbf{t}_h}=0, q^+_4=0,\\
&\mathbf{v}^+_5(\mathbf{x}_T)=\mathbf{0}, \frac{\partial (\mathbf{v}^+_5\cdot\mathbf{n}_h)}{\partial \mathbf{n}_h}=0, \frac{\partial (\mathbf{v}^+_5\cdot\mathbf{n}_h)}{\partial \mathbf{t}_h}=1, \frac{\partial (\mathbf{v}^+_5\cdot\mathbf{t}_h)}{\partial \mathbf{n}_h}=-1, \frac{\partial (\mathbf{v}^+_5\cdot\mathbf{t}_h)}{\partial \mathbf{t}_h}=0, q^+_5=0,\\
&\mathbf{v}^+_6(\mathbf{x}_T)=\mathbf{0}, \frac{\partial (\mathbf{v}^+_6\cdot\mathbf{n}_h)}{\partial \mathbf{n}_h}=-1, \frac{\partial (\mathbf{v}^+_6\cdot\mathbf{n}_h)}{\partial \mathbf{t}_h}=0, \frac{\partial (\mathbf{v}^+_6\cdot\mathbf{t}_h)}{\partial \mathbf{n}_h}=0, \frac{\partial (\mathbf{v}^+_6\cdot\mathbf{t}_h)}{\partial \mathbf{t}_h}=1, q^+_6=-2\mu^+,\\
&\mathbf{v}^+_7(\mathbf{x}_T)=\mathbf{0}, \frac{\partial (\mathbf{v}^+_7\cdot\mathbf{n}_h)}{\partial \mathbf{n}_h}=1, \frac{\partial (\mathbf{v}^+_7\cdot\mathbf{n}_h)}{\partial \mathbf{t}_h}=0, \frac{\partial (\mathbf{v}^+_7\cdot\mathbf{t}_h)}{\partial \mathbf{n}_h}=0, \frac{\partial (\mathbf{v}^+_7\cdot\mathbf{t}_h)}{\partial \mathbf{t}_h}=0, q^+_7=2\mu^+.
\end{aligned}
\end{equation}
By using the $\mathbf{n}_h$-$\mathbf{t}_h$ coordinate system, it is easy to verify that  the above defined functions exist uniquely and satisfy the jump conditions (\ref{aux2}). If we define  $(\mathbf{v}_i, q_i)|_{T_h^\pm}=(\mathbf{v}^\pm_i, q^\pm_i)$,
then  the auxiliary functions satisfying (\ref{aux1})-(\ref{aux2}) can be obtained  by
\begin{equation}\label{psi_decomp_N}
\begin{aligned}
( \boldsymbol{\Psi}_{i,T}, \psi_{i,T})&=(\mathbf{v}_i, q_i)-\Pi_{h,T}^{IFE}(\mathbf{v}_i, q_i)=(\mathbf{v}_i, q_i)-\sum_{j=1}^7N_{j,T}(\mathbf{v}_i, q_i)(\boldsymbol{\phi}_{j,T}^{IFE}, \varphi_{j,T}^{IFE}).
\end{aligned}
\end{equation}
Now we estimate the terms on the right-hand side of the above identity. From (\ref{def_vq_aux}), we have
\begin{equation}\label{vpest}
\begin{aligned}
&|\mathbf{v}_i^+|_{W^m_\infty(T)}\leq Ch_T^{-m},~i=1,2,~|\mathbf{v}_3^+|_{W^m_\infty(T)}=0,~|\mathbf{v}_i^+|_{W^m_\infty(T)}\leq Ch_T^{1-m},~i=4,...,7,~m=1,2,\\
&\|q_i^+\|_{L^\infty(T)}\leq C,~i=3,6,7,~~\|q_i^+\|_{L^\infty(T)}=0,~i=1,2,4,5,
\end{aligned}
\end{equation}
which together with (\ref{loc_dof}) leads to
\begin{equation}\label{Nvpest}
\begin{aligned}
&|N_{j,T}(\mathbf{v}_i, q_i)|\leq C,~~j=1,...,6,~~&&|N_{7,T}(\mathbf{v}_i, q_i)|=0,~~i=1,2,\\
&|N_{j,T}(\mathbf{v}_3, q_3)|=0,~~j=1,...,6,~~&&|N_{7,T}(\mathbf{v}_3, q_3)|\leq C,\\
&|N_{j,T}(\mathbf{v}_i, q_i)|\leq Ch_T,~~j=1,...,6,~~&&|N_{7,T}(\mathbf{v}_i, q_i)|=0,~~i=4,5,\\
&|N_{j,T}(\mathbf{v}_i, q_i)|\leq Ch_T,~~j=1,...,6,~~&&|N_{7,T}(\mathbf{v}_i, q_i)|\leq C,~~i=6,7.\\
\end{aligned}
\end{equation}
Combining (\ref{psi_decomp_N})-(\ref{Nvpest}) and (\ref{est_IFEbsis}), we get the desired estimates (\ref{aux_esti}).
\end{proof}

The following lemma  presents a decomposition of the term $({\rm II})$ in (\ref{decomp_newadd}) by the auxiliary functions and IFE basis functions.
\begin{lemma}\label{lema_fenjie}
For any $(\mathbf{v},q)\in  (\mathbf{V}, M)$, let $(\mathbf{v}_E^\pm, q_E^\pm)$ be extensions of $(\mathbf{v}^\pm, q^\pm)$ as defined in Lemma~\ref{lem_ext}, and let $v_{E,1}^s$ and $v_{E,2}^s$ be two components of $\mathbf{v}_E^s$, i.e., $\mathbf{v}_E^s=(v_{E,1}^s,v_{E,2}^s)^T$, $s=\pm$.
For any $T\in\mathcal{T}_h^\Gamma$, let $e_i$, $i=1,2,3$ be its edges and we set $e_i^\pm=e_i\cap\Omega^\pm$.  Then it holds that
\begin{equation*}
\begin{aligned}
\Pi_{h,T}(\mathbf{v}_E^\pm, q_E^\pm)-\left(\Pi_{h,T}^{IFE}(\mathbf{v}, q)\right)^\pm=\sum_{i=1}^7(\boldsymbol{\phi}_{i,T}^{IFE}, \varphi_{i,T}^{IFE})^\pm\alpha_i +\sum_{i=1}^7(\boldsymbol{\Psi}_{i,T}, \psi_{i,T})^{\pm}\beta_i, 
\end{aligned}
\end{equation*}
where
\begin{equation}\label{alpha}
\begin{aligned}
&\alpha_i=\frac{1}{|e_i|}\sum_{s=\pm}\int_{e_i^s}(\pi_{h,T}^{CR}v_{E,1}^s-v_{E,1}^s),~\alpha_{3+i}=\frac{1}{|e_i|}\sum_{s=\pm}\int_{e_i^s}(\pi_{h,T}^{CR}v_{E,2}^s-v_{E,2}^s),~i=1,2,3,\\
&\alpha_7=\frac{1}{|T|}\left(\sum_{s=\pm}\int_{T_h^s}(\pi_{h,T}^0q_E^s-q_E^s)\textcolor{red}{+\int_{T^\triangle\cap T^-}[\![q_E^\pm]\!]-\int_{T^\triangle\cap T^+}[\![q_E^\pm]\!]}\right)
\end{aligned}
\end{equation}
and
\begin{equation}\label{beta}
\begin{aligned}
&\beta_1=[\![\boldsymbol{\pi}^{CR}_{h,T} \mathbf{v}_{E}^\pm]\!](\mathbf{x}_T)\cdot\mathbf{n}_h,~&&\beta_2=[\![\boldsymbol{\pi}^{CR}_{h,T} \mathbf{v}_{E}^\pm]\!](\mathbf{x}_T)\cdot\mathbf{t}_h,\\
&\beta_3=[\![\sigma(\mu^\pm, \boldsymbol{\pi}^{CR}_{h,T}\mathbf{v}_{E}^{\pm},\pi_{h,T}^0q_{E}^{\pm})\mathbf{n}_h]\!]\cdot\mathbf{n}_h,~&&\beta_4=[\![\sigma(\mu^\pm, \boldsymbol{\pi}^{CR}_{h,T}\mathbf{v}_{E}^{\pm},\pi_{h,T}^0q_{E}^{\pm})\mathbf{n}_h]\!]\cdot\mathbf{t}_h,\\
&\beta_5=[\![\nabla (\boldsymbol{\pi}^{CR}_{h,T}\mathbf{v}_{E}^{\pm})\mathbf{t}_h]\!]\cdot\mathbf{n}_h,~&&\beta_6=[\![\nabla (\boldsymbol{\pi}^{CR}_{h,T}\mathbf{v}_{E}^{\pm})\mathbf{t}_h]\!]\cdot\mathbf{t}_h, \qquad\beta_7=[\![\nabla\cdot (\boldsymbol{\pi}^{CR}_{h,T}\mathbf{v}_{E}^{\pm})]\!].
\end{aligned}
\end{equation}
\end{lemma}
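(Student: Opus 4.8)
The plan is to read the asserted identity as the expansion of a single piecewise polynomial pair in a dual basis. Set $(\mathbf{w}^\pm,r^\pm):=\Pi_{h,T}(\mathbf{v}_E^\pm,q_E^\pm)-\big(\Pi_{h,T}^{IFE}(\mathbf{v},q)\big)^\pm$ for the left-hand side. By (\ref{operaPI_1}) we have $\Pi_{h,T}(\mathbf{v}_E^\pm,q_E^\pm)=(\boldsymbol{\pi}_{h,T}^{CR}\mathbf{v}_E^\pm,\pi_{h,T}^0q_E^\pm)$, while $\big(\Pi_{h,T}^{IFE}(\mathbf{v},q)\big)^\pm$ are the two polynomial pieces of an element of $\mathbf{V}M_h^{IFE}(T)$; hence $(\mathbf{w},r)$ belongs to the fourteen-dimensional space $\mathcal{S}(T)$ of pairs whose restrictions to $T_h^+$ and $T_h^-$ are independent members of $(P_1(T)^2,P_0(T))$. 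Everything then reduces to expanding $(\mathbf{w},r)$ in a convenient basis of $\mathcal{S}(T)$ and identifying the coefficients with the $\alpha_i$ and $\beta_i$.

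First I would assemble the fourteen functions $\{(\boldsymbol{\phi}_{i,T}^{IFE},\varphi_{i,T}^{IFE})\}_{i=1}^7$ and $\{(\boldsymbol{\Psi}_{i,T},\psi_{i,T})\}_{i=1}^7$, which exist and are unique by Lemma~\ref{lem_IFEbasis} and Lemma~\ref{lem_auxest}, and pair them with fourteen functionals: the seven averages $N_{j,T}$ of (\ref{loc_dof}) together with seven jump functionals $\ell_1,\dots,\ell_7$ defined so that $\ell_k$ sends a piecewise pair to the $k$-th expression appearing on the right-hand side of (\ref{beta}). The crucial structural fact is biorthogonality. On one hand, (\ref{def_IFEbasis}) gives $N_{j,T}(\boldsymbol{\phi}_{i,T}^{IFE},\varphi_{i,T}^{IFE})=\delta_{ij}$, and since every IFE shape function obeys the homogeneous jump conditions (\ref{jp_cond_IFE1})--(\ref{jp_cond_IFE3}) each $\ell_k$ annihilates it. On the other hand, (\ref{aux1}) gives $N_{j,T}(\boldsymbol{\Psi}_{i,T},\psi_{i,T})=0$, and the normalizations in (\ref{aux2}) are arranged precisely so that, using orthonormality of $\mathbf{n}_h$ and $\mathbf{t}_h$, one has $\ell_k(\boldsymbol{\Psi}_{i,T},\psi_{i,T})=\delta_{ik}$. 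Thus the matrix pairing these fourteen functionals with the fourteen functions is the identity, so the functions are linearly independent and form a basis of $\mathcal{S}(T)$, while the functionals are unisolvent on $\mathcal{S}(T)$.

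Granting unisolvence, it suffices to show that the two sides of the claimed identity carry the same values under all fourteen functionals. For the right-hand side this is immediate from biorthogonality: $N_{j,T}$ returns $\alpha_j$ and $\ell_k$ returns $\beta_k$. For the left-hand side I would evaluate the functionals of $(\mathbf{w},r)$ directly. The jump functionals are the easy half: because $\big(\Pi_{h,T}^{IFE}(\mathbf{v},q)\big)^\pm$ satisfies (\ref{jp_cond_IFE1})--(\ref{jp_cond_IFE3}), each $\ell_k$ kills the IFE contribution, so $\ell_k(\mathbf{w},r)=\ell_k(\boldsymbol{\pi}_{h,T}^{CR}\mathbf{v}_E^\pm,\pi_{h,T}^0q_E^\pm)$, which is exactly $\beta_k$ as written in (\ref{beta}). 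For the averages I would invoke the reproduction property $N_{j,T}(\Pi_{h,T}^{IFE}(\mathbf{v},q))=N_{j,T}(\mathbf{v},q)$ from (\ref{local_PIife}); splitting each edge integral at the point where $\Gamma$ meets the edge (which, by the construction of $\Gamma_h$, is also where $\Gamma_h$ meets it) and using the continuity of $\mathbf{v}$ across $\Gamma$ turns $N_{i,T}(\mathbf{v},q)$ into $\frac{1}{|e_i|}\sum_s\int_{e_i^s}v_E^s$ for the relevant component, whence $N_{i,T}(\mathbf{w},r)=\alpha_i$ of (\ref{alpha}).

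The one genuinely delicate point, I expect, is bookkeeping rather than conceptual, and it concerns the pressure average $\alpha_7$. Since the pressure is discontinuous and $T_h^\pm\neq T\cap\Omega^\pm$ on the mismatch region $T^\triangle$, one must evaluate $N_{7,T}$ consistently with the piecewise extensions $(\mathbf{v}_E^\pm,q_E^\pm)$ dictated by the superscript convention at the end of Section~\ref{preliminary}, so that $\frac{1}{|T|}\int_T q$ is matched against $\frac{1}{|T|}\sum_s\int_{T_h^s}q_E^s$; the velocity averages need no such care because continuity of $\mathbf{v}$ makes the edge splitting exact. Once all fourteen functional values are matched, unisolvence yields the identity.
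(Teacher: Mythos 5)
Your proposal is correct and is essentially the paper's own proof: the paper likewise writes the difference $\Pi_{h,T}(\mathbf{v}_E^\pm, q_E^\pm)-\left(\Pi_{h,T}^{IFE}(\mathbf{v}, q)\right)^\pm$ as a combination of the fourteen functions $(\boldsymbol{\phi}_{i,T}^{IFE}, \varphi_{i,T}^{IFE})$ and $(\boldsymbol{\Psi}_{i,T}, \psi_{i,T})$, verifies exactly your biorthogonality relations (there phrased as checking that the difference of the two candidate expansions satisfies the homogeneous jump conditions and has vanishing $N_{i,T}$, hence lies in $\mathbf{V}M_h^{IFE}(T)$ and is zero by Remark~\ref{basiseq0}), and then computes the coefficients by the same edge/element integral splitting, via the broken operator $\Pi_{h,T}^{BK}$ and the reproduction property (\ref{local_PIife}). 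Your repackaging of the unisolvence step as a dual-basis and dimension-count argument, and your explicit flagging of the $\alpha_7$ bookkeeping convention, are the only (cosmetic) differences from the paper's argument.
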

\begin{proof}
For simplicity of notation, we define a pair of functions $(\boldsymbol{\Xi}_h,  \xi_h)$ such that 
\begin{equation}\label{xi}
(\boldsymbol{\Xi}_h,  \xi_h)|_{T_h^\pm}=(\boldsymbol{\Xi}_h,  \xi_h)^\pm ~~\mbox{ with }~~(\boldsymbol{\Xi}_h,  \xi_h)^\pm=\Pi_{h,T}(\mathbf{v}_E^\pm, q_E^\pm)-\left(\Pi_{h,T}^{IFE}(\mathbf{v}, q)\right)^\pm. 
\end{equation}
Define  another pair of functions $(\widehat{\boldsymbol{\Xi}}_h, \widehat{\xi}_h)$ by
\begin{equation}\label{xihat}
(\widehat{\boldsymbol{\Xi}}_h, \widehat{\xi}_h)=\sum_{i=1}^7(\boldsymbol{\phi}_{i,T}^{IFE}, \varphi_{i,T}^{IFE})\alpha_i +\sum_{i=1}^7(\boldsymbol{\Psi}_{i,T}, \psi_{i,T})\beta_i
\end{equation}
with
\begin{equation}\label{pro_alpha}
\alpha_i=N_{i,T}(\boldsymbol{\Xi}_h,  \xi_h), ~i=1,...,7
\end{equation}
and
\begin{equation}\label{pro_beta}
\begin{aligned}
&\beta_1=[\![ \boldsymbol{\Xi}_{h}^\pm]\!](\mathbf{x}_T)\cdot\mathbf{n}_h,~&&\beta_2=[\![\boldsymbol{\Xi}_{h}^\pm]\!](\mathbf{x}_T)\cdot\mathbf{t}_h,\\
&\beta_3=[\![\sigma(\mu^\pm, \boldsymbol{\Xi}_{h}^{\pm}, \xi_{h}^{\pm})\mathbf{n}_h]\!]\cdot\mathbf{n}_h,~&&\beta_4=[\![\sigma(\mu^\pm,  \boldsymbol{\Xi}_{h}^{\pm}, \xi_{h}^{\pm})\mathbf{n}_h]\!]\cdot\mathbf{t}_h,\\
&\beta_5=[\![\nabla \boldsymbol{\Xi}_{h}^{\pm}\mathbf{t}_h]\!]\cdot\mathbf{n}_h,~&&\beta_6=[\![\nabla  \boldsymbol{\Xi}_{h}^{\pm}\mathbf{t}_h]\!]\cdot\mathbf{t}_h, \qquad\beta_7=[\![\nabla\cdot  \boldsymbol{\Xi}_{h}^{\pm}]\!].
\end{aligned}
\end{equation}

Next, we prove $(\boldsymbol{\Xi}_h,  \xi_h)=(\widehat{\boldsymbol{\Xi}}_h, \widehat{\xi}_h)$. Using the fact that the IFE basis function $(\boldsymbol{\phi}_{i,T}^{IFE}, \varphi_{i,T}^{IFE})$ and the constructed function $(\boldsymbol{\Psi}_{i,T}, \psi_{i,T})$ satisfy the interface jump conditions (\ref{jp_cond_IFE1})-(\ref{jp_cond_IFE3}) and  (\ref{aux2}) respectively,  we have from (\ref{xihat}) and (\ref{pro_beta}) that   
\begin{equation*}
\begin{aligned}
&[\![(\widehat{\boldsymbol{\Xi}}_h-\boldsymbol{\Xi}_{h})^\pm]\!](\mathbf{x}_T)\cdot\mathbf{n}_h=0,~[\![(\widehat{\boldsymbol{\Xi}}_h-\boldsymbol{\Xi}_{h})^\pm]\!](\mathbf{x}_T)\cdot\mathbf{t}_h=0,\\
&[\![\sigma(\mu^\pm, (\widehat{\boldsymbol{\Xi}}_h-\boldsymbol{\Xi}_{h})^{\pm}, (\widehat{\xi}_h-\xi_{h})^{\pm})\mathbf{n}_h]\!]\cdot\mathbf{n}_h=0,~[\![\sigma(\mu^\pm,  (\widehat{\boldsymbol{\Xi}}_h-\boldsymbol{\Xi}_{h})^{\pm}, (\widehat{\xi}_h-\xi_{h})^{\pm})\mathbf{n}_h]\!]\cdot\mathbf{t}_h=0,\\
&[\![\nabla (\widehat{\boldsymbol{\Xi}}_h-\boldsymbol{\Xi}_{h})^{\pm}\mathbf{t}_h]\!]\cdot\mathbf{n}_h=0,~[\![\nabla  (\widehat{\boldsymbol{\Xi}}_h-\boldsymbol{\Xi}_{h})^{\pm}\mathbf{t}_h]\!]\cdot\mathbf{t}_h=0, ~[\![\nabla\cdot  (\widehat{\boldsymbol{\Xi}}_h-\boldsymbol{\Xi}_{h})^{\pm}]\!]=0,
\end{aligned}
\end{equation*}
which implies 
\begin{equation*}
(\widehat{\boldsymbol{\Xi}}_h-\boldsymbol{\Xi}_h, \widehat{\xi}_h-\xi_h)\in\mathbf{V}M_h^{IFE}(T).
\end{equation*}
Similarly, from (\ref{def_IFEbasis}), (\ref{aux1}) and (\ref{xihat})-(\ref{pro_alpha}), we also have 
\begin{equation*}
N_{i,T}(\widehat{\boldsymbol{\Xi}}_h-\boldsymbol{\Xi}_h, \widehat{\xi}_h-\xi_h)=0,~~i=1,...,7.
\end{equation*}
In view of Remark~\ref{basiseq0} we therefore conclude  $(\boldsymbol{\Xi}_h-\widehat{\boldsymbol{\Xi}}_h,  \xi_h-\widehat{\xi}_h)=(\mathbf{0}, 0)$, i.e.,  $(\boldsymbol{\Xi}_h,  \xi_h)=(\widehat{\boldsymbol{\Xi}}_h, \widehat{\xi}_h)$.

Now it remains to calculate the constants $\alpha_i$ and $\beta_i$ in (\ref{pro_alpha})-(\ref{pro_beta}).
If we define a broken interpolation operator $\Pi_{h,T}^{BK}$ such that 
\begin{equation}\label{bk}
\left(\Pi_{h,T}^{BK}(\mathbf{v}, q)\right)|_{T_h^\pm}=\Pi_{h,T}(\mathbf{v}_E^\pm, q_E^\pm),
\end{equation}
then $(\boldsymbol{\Xi}_h,  \xi_h)$ defined in (\ref{xi}) can be written as 
\begin{equation*}
(\boldsymbol{\Xi}_h,  \xi_h)=\Pi_{h,T}^{BK}(\mathbf{v}, q)-\Pi_{h,T}^{IFE}(\mathbf{v}, q).
\end{equation*}
By (\ref{local_PIife}),  we can calculate $\alpha_i$ in (\ref{pro_alpha}) as
\begin{equation*}
\alpha_i=N_{i,T}(\Pi_{h,T}^{BK}(\mathbf{v}, q))-N_{i,T}(\Pi_{h,T}^{IFE}(\mathbf{v}, q))=N_{i,T}(\Pi_{h,T}^{BK}(\mathbf{v}, q))-N_{i,T}(\mathbf{v}, q),\quad i=1,...,7,
\end{equation*}
which together with (\ref{bk}), (\ref{operaPI_1}) and (\ref{loc_dof}) leads to (\ref{alpha}).  The results in (\ref{beta}) for $\beta_i$, $i=1,...,7$ are obtained by substituting (\ref{xi}) into (\ref{pro_beta}) and using the fact that $\Pi_{h,T}^{IFE}(\mathbf{v}, q)$ satisfies the interface jump conditions (\ref{jp_cond_IFE1})-(\ref{jp_cond_IFE3}). This completes the proof of the lemma.
\end{proof}

\begin{theorem}\label{lem_interIFE_up}
For any $(\mathbf{v},q)\in \widetilde{\boldsymbol{H}_2H_1}$, there exists a positive constant $C$ independent of $h_\Gamma$ and the interface location relative to the mesh such that
\begin{align}
&\sum_{T\in\mathcal{T}_h^\Gamma}|\mathbf{v}_{E}^\pm-(\Pi_{\mathbf{v},q}^{IFE}\mathbf{v})^\pm |^2_{H^m(T)}\leq Ch_\Gamma^{4-2m}(\|\mathbf{v}\|^2_{H^2(\Omega^+\cup\Omega^-)}+\|q\|^2_{H^1(\Omega^+\cup\Omega^-)}),~~m=0,1,\label{interIFE_u}\\
&\sum_{T\in\mathcal{T}_h^\Gamma}\|q_{E}^\pm- (\Pi_{\mathbf{v},q}^{IFE}q)^\pm\|^2_{L^2(T)}\leq Ch_\Gamma^{2}(\|\mathbf{v}\|^2_{H^2(\Omega^+\cup\Omega^-)}+\|q\|^2_{H^1(\Omega^+\cup\Omega^-)}).\label{interIFE_p}
\end{align}
\end{theorem}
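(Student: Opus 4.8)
The plan is to estimate the error on each interface element $T\in\mathcal{T}_h^\Gamma$ by inserting the broken interpolant $\Pi_{h,T}(\mathbf{v}_E^\pm,q_E^\pm)=(\boldsymbol{\pi}_{h,T}^{CR}\mathbf{v}_E^\pm,\pi_{h,T}^0 q_E^\pm)$ and splitting
\[
(\mathbf{v}_E^\pm,q_E^\pm)-\big(\Pi_{h,T}^{IFE}(\mathbf{v},q)\big)^\pm
=\underbrace{(\mathbf{v}_E^\pm,q_E^\pm)-\Pi_{h,T}(\mathbf{v}_E^\pm,q_E^\pm)}_{(\mathrm{I})}
+\underbrace{\Pi_{h,T}(\mathbf{v}_E^\pm,q_E^\pm)-\big(\Pi_{h,T}^{IFE}(\mathbf{v},q)\big)^\pm}_{(\mathrm{II})}.
\]
Since $\mathbf{v}_E^\pm\in H^2(\Omega)$ and $q_E^\pm\in H^1(\Omega)$ are genuine one-sided extensions, term $(\mathrm{I})$ is the classical $CR$–$P_0$ interpolation error on the \emph{whole} element, so $|\mathbf{v}_E^\pm-\boldsymbol{\pi}_{h,T}^{CR}\mathbf{v}_E^\pm|_{H^m(T)}\le Ch_T^{2-m}|\mathbf{v}_E^\pm|_{H^2(T)}$ and $\|q_E^\pm-\pi_{h,T}^0 q_E^\pm\|_{L^2(T)}\le Ch_T|q_E^\pm|_{H^1(T)}$; after summation these already give the powers $h_\Gamma^{4-2m}$ and $h_\Gamma^2$, using the finite-overlap property and the extension stability of Lemma~\ref{lem_ext}. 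Everything therefore reduces to $(\mathrm{II})$, for which I would invoke the decomposition $(\mathrm{II})=\sum_{i=1}^7\alpha_i(\boldsymbol{\phi}_{i,T}^{IFE},\varphi_{i,T}^{IFE})^\pm+\sum_{i=1}^7\beta_i(\boldsymbol{\Psi}_{i,T},\psi_{i,T})^\pm$ of Lemma~\ref{lema_fenjie}, take the $H^m$-seminorm (velocity) and the $L^2$-norm (pressure), and bound the basis/auxiliary factors by Lemmas~\ref{lem_basisest} and~\ref{lem_auxest} (giving $|(\boldsymbol{\phi}_{i,T}^{IFE})^\pm|_{H^m(T)}\le Ch_T^{1-m}$, and $|\boldsymbol{\Psi}_{i,T}^\pm|_{H^m(T)}\le Ch_T^{1-m}$ for $i=1,2$, $\le Ch_T^{2-m}$ for $i\ge4$, with the analogous $L^2$ bounds for the pressure components). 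It then suffices to prove $|\alpha_i|\le Ch_T(\ldots)$ for $i=1,\dots,6$, $|\alpha_7|\le C(\ldots)$, $|\beta_1|,|\beta_2|\le Ch_T(\ldots)$ and $|\beta_i|\le C(\ldots)$ for $i=3,\dots,7$, where $(\ldots)$ is a local $H^2(\mathbf{v})$–$H^1(q)$ quantity; matching these against the basis norms reproduces exactly $h_T^{2-m}$ (velocity) and $h_T$ (pressure) per element.

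For the $\alpha_i$ in (\ref{alpha}) I would use the defining property $\int_{e_i}(\pi_{h,T}^{CR}g-g)=0$ of the $CR$ interpolation to rewrite $|e_i|\alpha_i=\int_{e_i^-}(\pi_{h,T}^{CR}w-w)$ with $w=v_{E,1}^--v_{E,1}^+$, and bound it by $|e_i^-|^{1/2}\|\pi_{h,T}^{CR}w-w\|_{L^2(e_i)}\le Ch_T^2|w|_{H^2(T)}$ through a trace plus interpolation estimate, giving $|\alpha_i|\le Ch_T(|\mathbf{v}_E^+|_{H^2(T)}+|\mathbf{v}_E^-|_{H^2(T)})$ for $i=1,\dots,6$; the pressure coefficient $\alpha_7$ is treated identically with the $P_0$ projection, yielding $|\alpha_7|\le C(|q_E^+|_{H^1(T)}+|q_E^-|_{H^1(T)})$.

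The genuinely delicate part, and the main obstacle, is the uniform estimation of $\beta_1,\dots,\beta_7$ in (\ref{beta}), which encode the failure of the broken interpolant to satisfy the discrete jump conditions on $\Gamma_{h,T}$. Here I would exploit that $(\mathbf{v},q)\in\widetilde{\boldsymbol{H}_2H_1}$ forces the exact conditions $[\mathbf{v}]_\Gamma=\mathbf{0}$ (hence $[\nabla\mathbf{v}\,\mathbf{t}]_\Gamma=\mathbf{0}$), $[\sigma(\mu,\mathbf{v},q)\mathbf{n}]_\Gamma=\mathbf{0}$ and $[\nabla\cdot\mathbf{v}]_\Gamma=0$ to hold on the true curve $\Gamma_T$, together with the key identities $\nabla\boldsymbol{\pi}_{h,T}^{CR}\mathbf{v}_E^\pm=\overline{\nabla\mathbf{v}_E^\pm}$ and $\pi_{h,T}^0 q_E^\pm=\overline{q_E^\pm}$ (element means), so that each $\beta_i$ becomes an element-average of a field $g\in H^1(T)$ whose $L^2(\Gamma_T)$-trace vanishes. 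For $\beta_1,\beta_2$ I use $\mathbf{w}(\mathbf{x}_T)=\mathbf{0}$ and the bound $\|\boldsymbol{\pi}_{h,T}^{CR}\mathbf{w}-\mathbf{w}\|_{L^\infty(T)}\le Ch_T|\mathbf{w}|_{H^2(T)}$; for $\beta_3,\dots,\beta_7$ I split $g\,\mathbf{n}_h=g\,\mathbf{n}+g(\mathbf{n}_h-\mathbf{n})$ (and analogously for $\mathbf{t}_h$), control the second piece by the $O(h_T)$ estimate (\ref{error_nt}), and control $\tfrac1{|T|}\int_T g$ through a Poincaré-type inequality $\|g\|_{L^2(T)}\le Ch_T|g|_{H^1(T)}$ for $H^1$ functions vanishing on $\Gamma_T$. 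The crux is that this last inequality must hold with $C$ \emph{independent of how $\Gamma$ cuts $T$} (in particular as $|T_h^\pm|\to0$); establishing such an interface-location-independent Poincaré/trace bound on arbitrarily cut elements, where the $\delta$-strip argument of Lemma~\ref{strip} and the $O(h_\Gamma^2)$ thickness (\ref{triT_rela}) of $T^\triangle$ come into play, is the heart of the argument.

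Finally I would assemble the per-element bounds: combining the $\alpha_i,\beta_i$ estimates with the basis/auxiliary norms yields $|(\mathrm{II})_{\mathbf{v}}|_{H^m(T)}\le Ch_T^{2-m}(\ldots)$ and $\|(\mathrm{II})_q\|_{L^2(T)}\le Ch_T(\ldots)$; squaring, summing over $T\in\mathcal{T}_h^\Gamma$, and passing from the local seminorms of the extensions to the global norms $\|\mathbf{v}\|_{H^2(\Omega^+\cup\Omega^-)}^2+\|q\|_{H^1(\Omega^+\cup\Omega^-)}^2$ via finite overlap and Lemma~\ref{lem_ext} then delivers (\ref{interIFE_u}) and (\ref{interIFE_p}).
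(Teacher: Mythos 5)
Your overall architecture coincides with the paper's: the same splitting into $({\rm I})$ and $({\rm II})$, the same decomposition of $({\rm II})$ via Lemma~\ref{lema_fenjie}, the same basis bounds from Lemmas~\ref{lem_basisest} and~\ref{lem_auxest}, and essentially the paper's estimates for $\alpha_1,\dots,\alpha_7$ and for $\beta_1,\beta_2$ (your observation that $\nabla\boldsymbol{\pi}^{CR}_{h,T}\mathbf{v}_E^\pm$ equals the element mean of $\nabla\mathbf{v}_E^\pm$ and $\pi^0_{h,T}q_E^\pm$ the mean of $q_E^\pm$ is correct and consistent with the paper's computation). The genuine gap is in your treatment of $\beta_3,\dots,\beta_7$. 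You reduce each of them to the mean over $T$ of a field $g\in H^1(T)$ whose trace vanishes on $\Gamma_T$, and then invoke a local Poincar\'e inequality $\|g\|_{L^2(T)}\le Ch_T|g|_{H^1(T)}$ with $C$ independent of how $\Gamma$ cuts $T$. Such a uniform element-local inequality does not exist: when the two intersection points of $\Gamma$ with $\partial T$ approach a common vertex, $\Gamma_T$ shrinks to a point, and after scaling to the reference element the Poincar\'e constant for functions constrained only on a set of diameter $\epsilon$ blows up as $\epsilon\to 0$ (in two dimensions like $\sqrt{\log(1/\epsilon)}$, since the $H^1$-capacity of such a set tends to zero). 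So the step you yourself single out as ``the heart of the argument'' is not merely unproven; it cannot be proven in the element-local form you propose, and your proof as written would yield a constant that degenerates as $|T_h^+|\to 0$ or $|T_h^-|\to 0$, contradicting the uniformity asserted in Theorem~\ref{lem_interIFE_up}. Citing Lemma~\ref{strip} as an ingredient for this local bound does not help, because Lemma~\ref{strip} is a statement about the global strip $U(\Gamma,\delta)$, not about a single cut element.

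The paper's proof avoids exactly this trap by never localizing the Poincar\'e inequality. Since each $\beta_i$, $i\ge 3$, is a constant on $T$ (a jump of piecewise-constant quantities), it is bounded by $\beta_i^2\le Ch_T^{-2}\|g_i\|^2_{L^2(T)}$, where $g_i$ is the corresponding exact-jump field remaining after splitting off the interpolation errors and the $O(h_T)$ contributions from \eqref{error_nt}; see \eqref{est_beta4}--\eqref{est_beta3567}. These $L^2(T)$-norms are then summed over \emph{all} interface elements, producing $\|g_i\|^2_{L^2(U(\Gamma,h_\Gamma))}$, and only at that global level is a Poincar\'e-type bound applied, namely the second estimate of Lemma~\ref{strip} with $\delta=h_\Gamma$. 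This is legitimate because $g_i$ vanishes on all of $\Gamma$ (not merely on $\Gamma_T$) by the definition \eqref{def_h2h1} of $\widetilde{\boldsymbol{H}_2H_1}$, and the resulting constant depends only on $\Gamma$ itself, never on how $\Gamma$ intersects individual elements. Replacing your element-local inequality by this aggregate-then-apply-Lemma~\ref{strip} step repairs the argument; the remainder of your proposal then goes through as in the paper.
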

\begin{proof}
On each interface element $T\in\mathcal{T}_h^\Gamma$, by  the triangle inequality, we have
\begin{equation}\label{pro_main1}
\begin{aligned}
&|\mathbf{v}_E^\pm-(\Pi_{\mathbf{v},q}^{IFE}\mathbf{v})^\pm|_{H^m(T)}\leq |\mathbf{v}_E^\pm-\boldsymbol{\pi}^{CR}_{h,T} \mathbf{v}_E^\pm|_{H^m(T)}+|\boldsymbol{\pi}^{CR}_{h,T} \mathbf{v}_E^\pm -(\Pi_{\mathbf{v},q}^{IFE}\mathbf{v})^\pm|_{H^m(T)},\\
&\|q_E^\pm-(\Pi_{\mathbf{v},q}^{IFE}q)^\pm\|_{L^2(T)}\leq \|q_E^\pm-\pi_{h,T}^{0}q_E^\pm\|_{L^2(T)}+\|\pi_{h,T}^{0} q_E^\pm -(\Pi_{\mathbf{v},q}^{IFE}q)^\pm\|_{L^2(T)}.
\end{aligned}
\end{equation}
The estimates of the first terms are standard,
\begin{equation}\label{pro_main2}
\begin{aligned}
&|\mathbf{v}_E^\pm-\boldsymbol{\pi}^{CR}_{h,T} \mathbf{v}_E^\pm|^2_{H^m(T)} \leq Ch_T^{4-2m}| \mathbf{v}_E^\pm|^2_{H^2(T)},~~m=0,1,\\
&\|q_E^\pm-\pi_{h,T}^{0}q_E^\pm\|^2_{L^2(T)}\leq Ch_T^2|q_E^\pm|^2_{H^1(T)}.
\end{aligned}
\end{equation}
For the second term on the right-hand side of (\ref{pro_main1}),  we use (\ref{operaPI_1}), (\ref{Pi_IFE_2}) and Lemmas~\ref{lema_fenjie}, \ref{lem_basisest} and \ref{lem_auxest} to get
\begin{equation}\label{pro_main3}
\begin{aligned}
&|\boldsymbol{\pi}^{CR}_{h,T} \mathbf{v}_E^\pm -(\Pi_{\mathbf{v},q}^{IFE}\mathbf{v})^\pm|^2_{H^m(T)}\leq C\left(\sum_{i=1}^7\alpha_i^2|(\boldsymbol{\phi}_{i,T}^{IFE})^\pm|^2_{H^m(T)} +\sum_{i=1}^7 \beta_i^2|\boldsymbol{\Psi}_{i,T}^{\pm}|^2_{H^m(T)}\right)\\
&\qquad\qquad\qquad\qquad\leq C\sum_{i=1}^6 \alpha_i^2 h_T^{2-2m}+C\sum_{i=1}^2\beta_i^2 h_T^{2-2m}+C\sum_{i=4}^7\beta_i^2 h_T^{4-2m}, \\
&\|\pi_{h,T}^{0} q_E^\pm -(\Pi_{\mathbf{v},q}^{IFE}q)^\pm\|^2_{L^2(T)}\leq C\left(\sum_{i=1}^7\alpha_i^2 \|(\varphi_{i,T}^{IFE})^\pm\|^2_{L^2(T)} +\sum_{i=1}^7\beta_i^2\|\psi_{i,T}^{\pm}\|^2_{L^2(T)}\right)\\
&\qquad\qquad\qquad\qquad\leq C\sum_{i=1}^6 \alpha_i^2 +C\alpha_7^2h_T^2+C\sum_{i=1}^2\beta_i^2 +C\sum_{i=3}^7\beta_i^2 h_T^{2}, 
\end{aligned}
\end{equation}
where the constants $\alpha_i$ and $\beta_i$ are defined in (\ref{alpha}) and (\ref{beta}). Next, we estimate these constants one by one.  By the Cauchy-Schwarz inequality, we have
\begin{equation*}
\begin{aligned}
&\alpha_i^2=\frac{1}{|e_i|^2}\left|\sum_{s=\pm}\int_{e_i^s}(\pi_{h,T}^{CR}v_{E,1}^s-v_{E,1}^s)\right|^2\leq C|e_i|^{-1} \sum_{s=\pm} \|\pi_{h,T}^{CR}v_{E,1}^s-v_{E,1}^s\|^2_{L^2(e_i)}, ~ i=1,2,3,\\
&\alpha_{i+3}^2\leq C|e_i|^{-1} \sum_{i=\pm} \|\pi_{h,T}^{CR}v_{E,2}^s-v_{E,2}^s\|^2_{L^2(e_i)}, ~ i=1,2,3,\\
&\alpha_7^2\leq Ch_T^{-2}\sum_{s=\pm}\|\pi_{h,T}^0q_E^s-q_E^s\|^2_{L^2(T)}\textcolor{red}{+Ch_T^{-1}\sum_{s=\pm}\|q_E^s\|^2_{L^2(T^\triangle)}}.
\end{aligned}
\end{equation*}
By the standard trace inequality,  the standard  interpolation error estimates, \textcolor{red}{Lemma 2.5 in \cite{2021ji_nonconform} and the interface trace inequality (see \cite{hansbo2002unfitted,2012wuAn,2016High})}, it follows that 
\begin{equation}\label{est_alpha16}
\begin{aligned}
\alpha_i^2\leq Ch_T^2\sum_{s=\pm} |\mathbf{v}_E^s|^2_{H^2(T)},~i=1,...,6,\qquad \alpha_7^2&\leq C\sum_{s=\pm}\textcolor{red}{\|q_E^s\|^2_{H^1(T)}}.
\end{aligned}
\end{equation}
Since $(\mathbf{v},q)\in \widetilde{\boldsymbol{H}_2H_1}$,  the value $\mathbf{v}(\mathbf{x}_T)$ is well-defined and the identity  $[\![ \mathbf{v}_E^\pm]\!](\mathbf{x}_T)=0$ holds on the point $\mathbf{x}_T\in \Gamma_{h,T}\cap \Gamma_{T}$. Therefore, the constants $\beta_1$ and $\beta_2$ in (\ref{beta}) can be estimated as
\begin{equation}\label{est_beta12}
\begin{aligned}
\beta_i^2&\leq \left|[\![\boldsymbol{\pi}^{CR}_{h,T} \mathbf{v}_{E}^\pm]\!](\mathbf{x}_T)\right|^2=\left|[\![\boldsymbol{\pi}^{CR}_{h,T} \mathbf{v}_{E}^\pm-\mathbf{v}_{E}^\pm]\!](\mathbf{x}_T)\right|^2\leq \left\|[\![\boldsymbol{\pi}^{CR}_{h,T} \mathbf{v}_{E}^\pm-\mathbf{v}_{E}^\pm]\!]\right\|_{L^\infty(T)}^2 \\
&\leq C\sum_{s=\pm}\left\| \boldsymbol{\pi}^{CR}_{h,T} \mathbf{v}_{E}^s-\mathbf{v}_{E}^s \right\|_{L^\infty(T)}^2  \leq Ch_T^2\sum_{s=\pm} |\mathbf{v}_E^s|^2_{H^2(T)},~~i=1,2.
\end{aligned}
\end{equation}
To estimate $\beta_3$ and $\beta_4$, we use the following notations for simplicity
\begin{equation}\label{nota_new}
\sigma_{\boldsymbol{\pi}}^\pm:=\sigma(\mu^\pm, \boldsymbol{\pi}^{CR}_{h,T}\mathbf{v}_{E}^{\pm},\pi_{h,T}^0q_{E}^{\pm}),~~\sigma^\pm:=\sigma(\mu^\pm,\mathbf{v}_{E}^{\pm}, q_{E}^{\pm}).
\end{equation}
Noticing  $\sigma_{\boldsymbol{\pi}}^\pm\mathbf{n}_h$ are constant vectors, we derive  from (\ref{beta}) that
\begin{equation*}
\begin{aligned}
\beta_4^2&=[\![\mathbf{t}_h^T\sigma_{\boldsymbol{\pi}}^\pm\mathbf{n}_h]\!]^2\leq Ch_T^{-2} \|[\![\mathbf{t}_h^T\sigma_{\boldsymbol{\pi}}^\pm\mathbf{n}_h]\!]\|^2_{L^2(T)}\\
&=Ch_T^{-2} \|[\![\mathbf{t}_h^T(\sigma_{\boldsymbol{\pi}}^\pm-\sigma^\pm)\mathbf{n}_h+(\mathbf{t}_h^T-\mathbf{t}^T)\sigma^\pm\mathbf{n}+\mathbf{t}_h^T\sigma^\pm(\mathbf{n}_h-\mathbf{n})+\mathbf{t}^T\sigma^\pm\mathbf{n}]\!]\|^2_{L^2(T)}\\
&\leq Ch_T^{-2}\sum_{s=\pm}\left( \|\sigma_{\boldsymbol{\pi}}^s-\sigma^s\|^2_{L^2(T)} + \|\mathbf{t}_h-\mathbf{t}\|^2_{L^\infty(T)}\|\sigma^s\|^2_{L^2(T)}\right.\\
&\qquad\qquad\left.+\|\mathbf{n}_h-\mathbf{n}\|^2_{L^\infty(T)}\|\sigma^s\|^2_{L^2(T)} \right)+Ch_T^{-2}\| [\![\sigma^\pm\mathbf{n}]\!]\|^2_{L^2(T)},
\end{aligned}
\end{equation*}
where $\|\sigma^s\| ^2_{L^2(T)}=\int_T |\sigma^s|^2$  with $|\sigma^s|=\sqrt{\sigma^s:\sigma^s}$ being the Frobenius norm for matrix. 
It follows from (\ref{error_nt}), (\ref{nota_new}),  and the standard interpolation error estimates that
\begin{equation}\label{est_beta4}
\beta_4^2\leq C\sum_{s=\pm}\sum_{i=1}^2\left( |\mathbf{v}_E^s|^2_{H^i(T)}+|q_E^s|^2_{H^{i-1}(T)}\right)+Ch_T^{-2}\| [\![\sigma(\mu^\pm,\mathbf{v}_{E}^{\pm}, q_{E}^{\pm})\mathbf{n}]\!]\|^2_{L^2(T)}.
\end{equation}
Analogously, we can estimate $\beta_i$, $i=3,5,6,7$ as
\begin{equation}\label{est_beta3567}
\begin{aligned}
&\beta_3^2\leq C\sum_{s=\pm}\sum_{i=1}^2\left( |\mathbf{v}_E^s|^2_{H^i(T)}+|q_E^s|^2_{H^{i-1}(T)}\right)+Ch_T^{-2}\| [\![\sigma(\mu^\pm,\mathbf{v}_{E}^{\pm}, q_{E}^{\pm})\mathbf{n}]\!]\|^2_{L^2(T)},\\
&\beta_i^2\leq C\sum_{s=\pm}\left( |\mathbf{v}_E^s|^2_{H^2(T)}+ |\mathbf{v}_E^s|^2_{H^1(T)}\right)+Ch_T^{-2} \|[\![\nabla \mathbf{v}_{E}^{\pm}\mathbf{t}]\!]\|^2_{L^2(T)},\quad i=5,6,\\
&\beta_7^2\leq C\sum_{s=\pm} |\mathbf{v}_E^s|^2_{H^2(T)}+Ch_T^{-2} \|[\![\nabla\cdot \mathbf{v}_{E}^{\pm}]\!]\|^2_{L^2(T)}.
\end{aligned}
\end{equation}
Substituting (\ref{est_alpha16})-(\ref{est_beta12}), (\ref{est_beta4})-(\ref{est_beta3567}) into (\ref{pro_main3}) and combining (\ref{pro_main1})-(\ref{pro_main2}), we have
\begin{equation}\label{pro_main_ls}
\begin{aligned}
&|\mathbf{v}_E^\pm -(\Pi_{\mathbf{v},q}^{IFE}\mathbf{v})^\pm|^2_{H^m(T)}\leq Ch_T^{4-2m}\sum_{s=\pm}(\|\mathbf{v}_E^s\|^2_{H^2(T)}+\|q_E^s\|^2_{H^1(T)})+Ch_T^{2-2m}\mathcal{J}(T),\\
&\|q_E^\pm -(\Pi_{\mathbf{v},q}^{IFE}q)^\pm\|^2_{L^2(T)}\leq Ch_T^{2}\sum_{s=\pm}(\|\mathbf{v}_E^s\|^2_{H^2(T)}+\|q_E^s\|^2_{H^1(T)})+C\mathcal{J}(T),
\end{aligned}
\end{equation}
where 
$$
\mathcal{J}(T):=\|[\![\nabla\cdot \mathbf{v}_{E}^{\pm}]\!]\|^2_{L^2(T)}+\| [\![\sigma(\mu^\pm,\mathbf{v}_{E}^{\pm}, q_{E}^{\pm})\mathbf{n}]\!]\|^2_{L^2(T)}+ \|[\![\nabla \mathbf{v}_{E}^{\pm}\mathbf{t}]\!]\|^2_{L^2(T)}.
$$
%
%
%
Since $(\mathbf{v},q)\in \widetilde{\boldsymbol{H}_2H_1}$, we know from the definition (\ref{def_h2h1}) that 
\begin{equation*}
\begin{aligned}
&[\![\sigma(\mu^\pm,\mathbf{v}_{E}^{\pm}, q_{E}^{\pm})\mathbf{n}]\!]|_\Gamma=\mathbf{0},~~\|[\![\nabla\cdot \mathbf{v}_{E}^{\pm}]\!]|_\Gamma=0,~~ [\![ \mathbf{v}_E^\pm ]\!]|_\Gamma=\mathbf{0} ~(\mbox{implying } [\![\nabla \mathbf{v}_E^\pm \textbf{t}]\!]|_\Gamma=\mathbf{0}). 
\end{aligned} 
\end{equation*}
Noticing that  $T\subset U(\Gamma,h_\Gamma)~\forall T\in\mathcal{T}_h^\Gamma$ from Assumption~\ref{assumption_delta}, by Lemma~\ref{strip} and the above identities, it holds 
\begin{equation}\label{newadded}
\begin{aligned}
\sum_{T\in\mathcal{T}_h^\Gamma}\mathcal{J}(T)&\leq\|[\![\nabla\cdot \mathbf{v}_{E}^{\pm}]\!]\|^2_{L^2(U(\Gamma,h_\Gamma))}+\| [\![\sigma(\mu^\pm,\mathbf{v}_{E}^{\pm}, q_{E}^{\pm})\mathbf{n}]\!]\|^2_{L^2(U(\Gamma,h_\Gamma))}+\|[\![\nabla \mathbf{v}_{E}^{\pm}\mathbf{t}]\!]\|^2_{L^2(U(\Gamma,h_\Gamma))}\\
&\leq Ch_\Gamma^2\sum_{s=\pm}\left(\|\mathbf{v}_E^s\|^2_{H^2(\Omega)}+\|q_E^s\|^2_{H^1(\Omega)} \right),
\end{aligned}
\end{equation}
where we note that the constant $C$ also depends on the curvature of $\Gamma$.  Summing up (\ref{pro_main_ls}) over all interface elements and  using (\ref{newadded}) and Lemma~\ref{lem_ext}, we obtain the desired  estimates (\ref{interIFE_u}) and (\ref{interIFE_p}).
\end{proof}

Now we are ready to prove the optimal approximation capabilities of the IFE space $\mathbf{V}M_h^{IFE}$,
where the error resulting from the mismatch of $\Gamma$ and $\Gamma_h$ is considered rigorously.
\begin{theorem}\label{lem_interIFE_up2}
For any $(\mathbf{v},q)\in \widetilde{\boldsymbol{H}_2H_1}$, there exists a positive constant $C$ independent of $h$ and the interface location relative to the mesh such that
\begin{align}
\sum_{T\in\mathcal{T}_h}|\mathbf{v}-\Pi_{\mathbf{v},q}^{IFE}\mathbf{v} |^2_{H^m(T)}&\leq Ch^{4-2m}(\|\mathbf{v}\|^2_{H^2(\Omega^+\cup\Omega^-)}+\|q\|^2_{H^1(\Omega^+\cup\Omega^-)}),~~m=0,1,\label{ultimate_ve}\\
\|q- \Pi_{\mathbf{v},q}^{IFE} q\|^2_{L^2(\Omega)}&\leq Ch^{2}(\|\mathbf{v}\|^2_{H^2(\Omega^+\cup\Omega^-)}+\|q\|^2_{H^1(\Omega^+\cup\Omega^-)}).\label{ultimate_pr}
\end{align}

\end{theorem}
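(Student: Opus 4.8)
The plan is to upgrade the extension‑based, element‑wise estimates of Theorem~\ref{lem_interIFE_up}, which are stated on interface elements in terms of the discrete subregions $T_h^\pm$, into the genuine interpolation error of $(\mathbf{v},q)$ measured on the exact subregions $T^\pm=T\cap\Omega^\pm$ over the whole triangulation. First I would split $\mathcal{T}_h=\mathcal{T}_h^{non}\cup\mathcal{T}_h^\Gamma$. On a non‑interface element $T\in\mathcal{T}_h^{non}$ the interface does not cross $T$, the IFE interpolation reduces to the standard operator $\Pi_{h,T}$, and $(\mathbf{v},q)$ is smooth on $T$; hence the classical Crouzeix--Raviart and $P_0$ interpolation error estimates give $|\mathbf{v}-\boldsymbol{\pi}^{CR}_{h,T}\mathbf{v}|_{H^m(T)}^2\le Ch^{4-2m}|\mathbf{v}|_{H^2(T)}^2$ and $\|q-\pi_{h,T}^{0}q\|_{L^2(T)}^2\le Ch^2|q|_{H^1(T)}^2$, whose sums are already of the required order. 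The real work is on the interface elements.

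On $T\in\mathcal{T}_h^\Gamma$ I would split $T$ into the \emph{matched} region $(T^+\cap T_h^+)\cup(T^-\cap T_h^-)$ and the \emph{mismatch} region $T^\triangle$ of (\ref{def_triangleT}). On the matched region the exact and discrete signs agree, so, since $\mathbf{v}^s=\mathbf{v}_E^s$ on $\Omega^s$, we have $\mathbf{v}-\Pi_{\mathbf{v},q}^{IFE}\mathbf{v}=\mathbf{v}_E^s-(\Pi_{\mathbf{v},q}^{IFE}\mathbf{v})^s$ with the same $s$, and similarly for $q$; these are controlled directly by Theorem~\ref{lem_interIFE_up}. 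On the mismatch region the signs disagree: on $T^+\cap T_h^-$, where $\mathbf{v}=\mathbf{v}_E^+$, I would add and subtract the opposite extension to write
$$
\mathbf{v}_E^+-(\Pi_{\mathbf{v},q}^{IFE}\mathbf{v})^- = [\![\mathbf{v}_E^\pm]\!] + \bigl(\mathbf{v}_E^- -(\Pi_{\mathbf{v},q}^{IFE}\mathbf{v})^-\bigr),
$$
and analogously $q_E^+-(\Pi_{\mathbf{v},q}^{IFE}q)^- = [\![q_E^\pm]\!] + (q_E^- -(\Pi_{\mathbf{v},q}^{IFE}q)^-)$ (and symmetrically on $T^-\cap T_h^+$). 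The parenthesized terms are again covered by Theorem~\ref{lem_interIFE_up}, because that estimate controls $\mathbf{v}_E^s-(\Pi_{\mathbf{v},q}^{IFE}\mathbf{v})^s$ and $q_E^s-(\Pi_{\mathbf{v},q}^{IFE}q)^s$ over the whole element $T\supset T^\triangle$.

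It remains to bound the jumps of the extensions on the thin set $T^\triangle$, which I expect to be the main obstacle. The key geometric input is $T^\triangle\subset U(\Gamma,Ch_\Gamma^2)$ from (\ref{triT_rela}), so the $\delta$‑strip argument of Lemma~\ref{strip} applies with $\delta=Ch_\Gamma^2$. For the velocity one has $[\![\mathbf{v}_E^\pm]\!]|_\Gamma=[\mathbf{v}]_\Gamma=\mathbf{0}$, so the sharper (second) estimate of Lemma~\ref{strip} yields the $L^2$ bound $\|[\![\mathbf{v}_E^\pm]\!]\|_{L^2(T^\triangle)}\le Ch_\Gamma^2\|\nabla[\![\mathbf{v}_E^\pm]\!]\|_{L^2(\Omega)}$ (of order $h_\Gamma^2$), while for the $H^1$ seminorm I would apply the first estimate of Lemma~\ref{strip} to each first derivative $D^\alpha[\![\mathbf{v}_E^\pm]\!]\in H^1(\Omega)$, which is legitimate since $\mathbf{v}_E^\pm\in H^2(\Omega)$ by Lemma~\ref{lem_ext}, obtaining $|[\![\mathbf{v}_E^\pm]\!]|_{H^1(T^\triangle)}\le Ch_\Gamma\|\mathbf{v}_E^\pm\|_{H^2(\Omega)}$. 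These match the target orders $Ch^{4-2m}$ after squaring. For the pressure the jump need not vanish on $\Gamma$, but only the weaker order $Ch^2$ is required in $L^2$; the first estimate of Lemma~\ref{strip} gives $\|[\![q_E^\pm]\!]\|_{L^2(T^\triangle)}\le Ch_\Gamma\|q_E^\pm\|_{H^1(\Omega)}$, which suffices.

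Finally I would collect the non‑interface, matched, and mismatch contributions, use $h_\Gamma\le h$ to absorb all interface‑element bounds into $Ch^{4-2m}$ for the velocity and $Ch^2$ for the pressure, and invoke the extension bounds $\|\mathbf{v}_E^\pm\|_{H^2(\Omega)}\le C\|\mathbf{v}\|_{H^2(\Omega^+\cup\Omega^-)}$ and $\|q_E^\pm\|_{H^1(\Omega)}\le C\|q\|_{H^1(\Omega^+\cup\Omega^-)}$ of Lemma~\ref{lem_ext} to arrive at (\ref{ultimate_ve}) and (\ref{ultimate_pr}). The delicate point throughout is keeping track of the exact‑versus‑discrete sign bookkeeping on $T^\triangle$ and exploiting that the velocity jump (but not the pressure jump) vanishes on $\Gamma$, so that the strip argument delivers exactly the powers of $h_\Gamma$ needed for optimality.
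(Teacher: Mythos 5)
Your proposal is correct and follows essentially the same route as the paper's own proof: the same split into non-interface elements, matched regions, and the mismatch set $T^\triangle$, the same add-and-subtract of the opposite extension so that Theorem~\ref{lem_interIFE_up} handles the extension–interpolant differences, and the same use of $T^\triangle\subset U(\Gamma,Ch_\Gamma^2)$ with Lemma~\ref{strip} (the sharper estimate for the velocity jump, which vanishes on $\Gamma$, and the weaker one for the pressure jump and the velocity gradient jump), finished off by Lemma~\ref{lem_ext}. No gaps; the sign bookkeeping and the powers of $h_\Gamma$ match the paper's computation exactly.
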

\begin{proof}
It suffices to consider the interface elements.
On each interface element $T\in\mathcal{T}_h^\Gamma$,  in view of the relations $T=T^+\cup T^-$ and $T^s=(T^s\cap T_h^+)\cup (T^s\cap T_h^-)$, $s=\pm$, we have
\begin{equation}\label{pro_int3a}
\begin{aligned}
|\mathbf{v}-\Pi_{\mathbf{v},q}^{IFE}\mathbf{v}|^2_{H^m(T)}&= \sum_{s=\pm}|\mathbf{v}^s-(\Pi_{\mathbf{v},q}^{IFE}\mathbf{v})^s|^2_{H^m(T^s\cap T_h^s)} \\
&+|\mathbf{v}^--(\Pi_{\mathbf{v},q}^{IFE}\mathbf{v})^+|^2_{H^m(T^-\cap T_h^+)}+|\mathbf{v}^+-(\Pi_{\mathbf{v},q}^{IFE}\mathbf{v})^-|^2_{H^m(T^+ \cap T_h^-)}.
\end{aligned}
\end{equation}
By the triangle inequality, we further obtain
\begin{equation}\label{pro_int4}
\begin{aligned}
&|\mathbf{v}^--(\Pi_{\mathbf{v},q}^{IFE}\mathbf{v})^+|^2_{H^m(T^-\cap T_h^+)}\leq 2|\mathbf{v}^--\mathbf{v}_E^+|^2_{H^m(T^-\cap T_h^+)}+2|\mathbf{v}_E^+-(\Pi_{\mathbf{v},q}^{IFE}\mathbf{v})^+|^2_{H^m(T^-\cap T_h^+)},\\
&|\mathbf{v}^+-(\Pi_{\mathbf{v},q}^{IFE}\mathbf{v})^-|^2_{H^m(T^+\cap T_h^-)}\leq 2|\mathbf{v}^+-\mathbf{v}_E^-|^2_{H^m(T^+\cap T_h^-)}+ 2|\mathbf{v}_E^-- (\Pi_{\mathbf{v},q}^{IFE}\mathbf{v})^- |^2_{H^m(T^+\cap T_h^-)}.
\end{aligned}
\end{equation}
Substituting (\ref{pro_int4}) into (\ref{pro_int3a}) and using the definition (\ref{def_triangleT}), we  conclude, for all $T\in\mathcal{T}_h^\Gamma$,
\begin{equation*}
|\mathbf{v}-\Pi_{\mathbf{v},q}^{IFE}\mathbf{v}|^2_{H^m(T)} \leq  C\sum_{s=\pm}|\mathbf{v}^s-(\Pi_{\mathbf{v},q}^{IFE}\mathbf{v})^s|^2_{H^m(T)}+C|[\![\mathbf{v}_E^\pm]\!]|^2_{H^m(T^\triangle)},~m=0,1.
\end{equation*}
Analogously, for all $T\in\mathcal{T}_h^\Gamma$, it holds
\begin{equation*}
\|q-\Pi_{\mathbf{v},q}^{IFE}q\|^2_{L^2(T)} \leq  C\sum_{s=\pm}\|q^s-(\Pi_{\mathbf{v},q}^{IFE}q)^s\|^2_{L^2(T)}+C\|[\![q_E^\pm]\!]\|^2_{L^2(T^\triangle)}.
\end{equation*}
Summing up and using Theorem~\ref{lem_interIFE_up} and the relation (\ref{triT_rela}), we arrive at
\begin{equation}\label{pro_zhazhi_js}
\begin{aligned}
&\sum_{T\in\mathcal{T}_h}|\mathbf{v}-\Pi_{\mathbf{v},q}^{IFE}\mathbf{v}|^2_{H^m(T)} \leq Ch^{4-2m}(\|\mathbf{v}\|^2_{H^2(\Omega^+\cup\Omega^-)}+\|q\|^2_{H^1(\Omega^+\cup\Omega^-)})+C|[\![\mathbf{v}_E^\pm]\!]|^2_{H^m(U(\Gamma,Ch_\Gamma^2))},\\
&\|q- \Pi_{\mathbf{v},q}^{IFE} q\|^2_{L^2(\Omega)}\leq Ch^{2}(\|\mathbf{v}\|^2_{H^2(\Omega^+\cup\Omega^-)}+\|q\|^2_{H^1(\Omega^+\cup\Omega^-)})+C\|[q_E^\pm]\|^2_{L^2(U(\Gamma,Ch_\Gamma^2))}.
\end{aligned}
\end{equation}
On the other hand, Lemma~\ref{strip} provides the following estimates
\begin{equation*}
\begin{aligned}
&\|[\![\mathbf{v}_E^\pm]\!]\|^2_{L^2(U(\Gamma,Ch_\Gamma^2))}\leq Ch_\Gamma^4\left|[\![\mathbf{v}_E^\pm]\!] \right|^2_{H^1(U(\Gamma,Ch_\Gamma^2))}\leq Ch_\Gamma^4\sum_{s=\pm}| \mathbf{v}_E^s |^2_{H^1(\Omega)},\\
&\| [\![\nabla\mathbf{v}_E^\pm]\!]\|^2_{L^2(U(\Gamma,Ch_\Gamma^2))}\leq Ch_\Gamma^2\left\|[\![\nabla\mathbf{v}_E^\pm]\!] \right\|^2_{H^1(\Omega)}\leq Ch_\Gamma^2\sum_{s=\pm}\| \mathbf{v}_E^s \|^2_{H^2(\Omega)},\\
&\|[q_E^\pm]\|^2_{L^2(U(\Gamma,Ch_\Gamma^2))}\leq Ch_\Gamma^2\|[\![q_E^\pm]\!]\|^2_{H^1(\Omega)} \leq Ch_\Gamma^2\sum_{s=\pm}\|q_E^s\|^2_{H^1(\Omega)},
\end{aligned}
\end{equation*}
where the fact $[\![\mathbf{v}_E^\pm]\!]|_\Gamma=\mathbf{0}$ is used for proving the first inequality.
Substituting the above inequalities into (\ref{pro_zhazhi_js}) and using  Lemma~\ref{lem_ext}, 
we complete the proof of the theorem.
\end{proof}

\begin{remark}
As shown in Remark~\ref{basiseq0_affect},  the function $\Pi_{\mathbf{v},q}^{IFE}\mathbf{v} $  depends only on the velocity $\mathbf{v}$, not on the pressure $q$. Accordingly, we can remove the term $\|q\|^2_{H^1(\Omega^+\cup\Omega^-)}$ on the right-hand sides of the estimates (\ref{interIFE_u}) and (\ref{ultimate_ve}) in Theorems~\ref{lem_interIFE_up} and \ref{lem_interIFE_up2}. Indeed, given $(\mathbf{v},q)\in \widetilde{\boldsymbol{H}_2H_1}$, we can construct a new function $\tilde{q}$ such that 
\begin{equation}\label{cons_qprime}
(\mathbf{v},\tilde{q})\in \widetilde{\boldsymbol{H}_2H_1}~\mbox{ and }~\|\tilde{q} \|_{H^1(\Omega^+\cup\Omega^-)}\leq C\|\mathbf{v}\|_{H^2(\Omega^+\cup\Omega^-)},
\end{equation}
which enables us to  remove the term $\|q\|_{H^1(\Omega^+\cup\Omega^-)}$ in (\ref{ultimate_ve}) (similarly, in (\ref{interIFE_u})) as
\begin{equation*}
\begin{aligned}
\sum_{T\in\mathcal{T}_h}&|\mathbf{v}-\Pi_{\mathbf{v},q}^{IFE}\mathbf{v} |^2_{H^m(T)}=\sum_{T\in\mathcal{T}_h}|\mathbf{v}-\Pi_{\mathbf{v},\tilde{q}}^{IFE}\mathbf{v} |^2_{H^m(T)}\\
&\leq Ch^{4-2m}(\|\mathbf{v}\|^2_{H^2(\Omega^+\cup\Omega^-)}+\|\tilde{q}\|^2_{H^1(\Omega^+\cup\Omega^-)})\leq Ch^{4-2m}\|\mathbf{v}\|^2_{H^2(\Omega^+\cup\Omega^-)}.
\end{aligned}
\end{equation*}
The function $\tilde{q}$ is constructed as follows. Define $\tilde{q}|_{\Omega^\pm}:=\tilde{q}^\pm$ with $\tilde{q}^\pm$ satisfying 
\begin{equation*}
\tilde{q}^+=0 ~~\mbox{ and }~~ \Delta \tilde{q}^-=0 \mbox{ in } \Omega^-,\quad \tilde{q}^-|_{\Gamma}=-[\sigma(\mu,\mathbf{v},0)\mathbf{n}]_\Gamma\cdot\mathbf{n}.
\end{equation*}
It is easy to verify that  the condition (\ref{cons_qprime}) is satisfied.
\end{remark}

\section{Analysis of the IFE method}\label{sec_error}
For all $(\mathbf{v}, q) \in (\mathbf{V}, M)+\mathbf{V}M_{h,0}^{IFE}$,  we introduce the following mesh dependent norms
\begin{equation}\label{def_norms}
\begin{aligned}
&\|\mathbf{v} \|^2_{1,h}:=\sum_{T\in\mathcal{T}_h} | \mathbf{v}|^2_{H^1(T)},~~\interleave \mathbf{v} \interleave^2_{1,h}:=\sum_{T\in\mathcal{T}_h} \|\sqrt{2\mu_h} \boldsymbol{\epsilon}(\mathbf{v})\|^2_{L^2(T)}+\sum_{e\in\mathcal{E}_h}\frac{1}{|e|}\|[\mathbf{v}]_e \|_{L^2(e)}^2,\\
&\interleave \mathbf{v} \interleave^2_{*,h}:=\interleave \mathbf{v} \interleave^2_{1,h}+\sum_{e\in\mathcal{E}_h^\Gamma}|e|\|\{ 2\mu_h\boldsymbol{\epsilon}(\mathbf{v})\mathbf{n}_e\}_e \|_{L^2(e)}^2 +\sum_{e\in\mathcal{E}_h^\Gamma}\frac{\eta+1}{|e|}\|[\mathbf{v}]_e \|_{L^2(e)}^2,\\
&\|q\|^2_{*,pre}:=\|q\|_{L^2(\Omega)}^2+\sum_{e\in\mathcal{E}_h^\Gamma}|e|\|\{q\}_e\|^2_{L^2(e)},\\
&\|(\mathbf{v}, q)\|^2:= \|\mathbf{v} \|^2_{1,h}+\|q\|_{L^2(\Omega)}^2+ J_h(q,q),~~\|(\mathbf{v}, q)\|^2_*:= \interleave \mathbf{v} \interleave^2_{*,h}+\|q\|^2_{*,pre}+ J_h(q,q).
\end{aligned}
\end{equation}
As $\mathbf{v}|_{T}\in H^1(T)^2$ for all $T\in\mathcal{T}_h$ from (\ref{jp_cond_IFE2}), $\int_e[\mathbf{v}]_e=\mathbf{0}$ for all $e\in\mathcal{E}_h$ and $\int_{\partial\Omega} \mathbf{v}=\mathbf{0}$, the Poincar\'e-Friedrichs inequality for piecewise $H^1$ functions (see \cite{brenner2003poincare}) and the Korn inequality  for piecewise $H^1$ vector functions (see \cite{brenner2004korn})  imply 
\begin{equation}\label{korn_ineq}
\|\mathbf{v}\|^2_{L^2(\Omega)}\leq C\sum_{T\in\mathcal{T}_h}|\mathbf{v}|^2_{H^1(T)},~~ \sum_{T\in\mathcal{T}_h}|\mathbf{v}|^2_{H^1(T)}\leq C\sum_{T\in\mathcal{T}_h}\|\boldsymbol{\epsilon}(\mathbf{v})\|^2_{L^2(T)}+C\sum_{e\in\mathcal{E}_h}|e|^{-1}\|[\mathbf{v}]\|_{L^2(e)}^2.
\end{equation}
Hence, $\|\cdot\| $ and $\| \cdot \|_{*}$ are indeed norms for the space $(\mathbf{V}, M)+\mathbf{V}M_{h,0}^{IFE}$.
\subsection{Boundedness and coercivity}
It follows from the Cauchy-Schwarz inequality that the bilinear forms $a_h(\cdot,\cdot)$ and $b_h(\cdot,\cdot)$ are bounded, {\em i.e.}, 
\begin{equation}\label{bd}
a_h(\mathbf{u},\mathbf{v})\leq \interleave \mathbf{u} \interleave_{*,h}\interleave \mathbf{v} \interleave_{*,h}\qquad\mbox{ and }\qquad b_h(\mathbf{v}, q)\leq C_b\|\mathbf{v}\|_{1,h}\|q\|_{*,pre},
\end{equation}
where $C_b$ is a constant independent of $h$ and the interface location relative to the mesh.
Furthermore, by the definitions (\ref{def_Ah}) and (\ref{def_norms}) we have the following lemma.
\begin{lemma}\label{lem_bd_Ah}
For all $(\mathbf{u}, p)$ and $ (\mathbf{v}, q)$ belonging to  $(\mathbf{V}, M)+\mathbf{V}M_{h,0}^{IFE}$, it holds 
\begin{equation}\label{bd_Ah}
A_h(\mathbf{u},p;\mathbf{v},q)\leq C_{A}\|(\mathbf{u},p)\|_*\|(\mathbf{v},q)\|_*,
\end{equation}
where $C_{A}$ is a positive constant independent of $h$ and the interface location relative to the mesh.
\end{lemma}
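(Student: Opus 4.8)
The plan is to estimate $A_h(\mathbf{u},p;\mathbf{v},q)$ term by term along its decomposition in (\ref{def_Ah}) into $a_h(\mathbf{u},\mathbf{v})$, $b_h(\mathbf{v},p)$, $-b_h(\mathbf{u},q)$, and $J_h(p,q)$, controlling each piece by the already-established boundedness estimates (\ref{bd}) together with the Korn inequality (\ref{korn_ineq}). The governing observation is that every norm-like quantity appearing in those estimates, namely $\interleave\cdot\interleave_{*,h}$, $\|\cdot\|_{1,h}$, $\|\cdot\|_{*,p}$, and $J_h(\cdot,\cdot)^{1/2}$, is dominated by the full norm $\|(\cdot,\cdot)\|_*$ read off from (\ref{def_norms}). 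Once this is in place, the result is assembled by a single application of the triangle inequality across the four terms.

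First I would apply the first bound in (\ref{bd}) to get $a_h(\mathbf{u},\mathbf{v})\leq\interleave\mathbf{u}\interleave_{*,h}\interleave\mathbf{v}\interleave_{*,h}$, and the second bound to get $b_h(\mathbf{v},p)\leq C_b\|\mathbf{v}\|_{1,h}\|p\|_{*,p}$ and $b_h(\mathbf{u},q)\leq C_b\|\mathbf{u}\|_{1,h}\|q\|_{*,p}$. For the penalty term, a single application of the Cauchy–Schwarz inequality over the interface edges $\mathcal{E}_h^\Gamma$ gives $J_h(p,q)\leq J_h(p,p)^{1/2}J_h(q,q)^{1/2}$, directly from the definition of $J_h$ in (\ref{def_Ah}).

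The one place where a little work is needed is converting the broken $H^1$-seminorm $\|\cdot\|_{1,h}$, which enters through the bound on $b_h$, into the energy norm. Since $\mu_h$ is bounded below by $\min\{\mu^+,\mu^-\}>0$, the Korn inequality (\ref{korn_ineq}) yields $\|\mathbf{v}\|^2_{1,h}\leq C\interleave\mathbf{v}\interleave^2_{1,h}\leq C\interleave\mathbf{v}\interleave^2_{*,h}$, so that $\|\mathbf{v}\|_{1,h}\leq C\interleave\mathbf{v}\interleave_{*,h}$. Finally, from the definition of $\|(\cdot,\cdot)\|_*$ in (\ref{def_norms}) we have $\interleave\mathbf{v}\interleave_{*,h}\leq\|(\mathbf{v},q)\|_*$, $\|q\|_{*,p}\leq\|(\mathbf{v},q)\|_*$, and $J_h(q,q)^{1/2}\leq\|(\mathbf{v},q)\|_*$, with the analogous dominations for $(\mathbf{u},p)$. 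Combining the four estimates and these dominations produces $A_h(\mathbf{u},p;\mathbf{v},q)\leq C_A\|(\mathbf{u},p)\|_*\|(\mathbf{v},q)\|_*$ with $C_A$ depending only on $C_b$ and the Korn constant, hence independent of $h$ and of the interface location relative to the mesh.

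Since the substantive estimates (\ref{bd}) and (\ref{korn_ineq}) are supplied in advance, there is no genuine obstacle here and the argument is essentially bookkeeping. The only subtlety worth flagging is the mismatch between the $\|\cdot\|_{1,h}$-type control available for $b_h$ and the energy norm $\interleave\cdot\interleave_{*,h}$ built into $\|(\cdot,\cdot)\|_*$; bridging this gap is exactly what the Korn inequality accomplishes, and its validity for the nonconforming $CR$ spaces relies in turn on the edge-jump terms included in $\interleave\cdot\interleave_{1,h}$.
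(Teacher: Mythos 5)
Your proposal is correct and follows essentially the same route as the paper: the paper treats (\ref{bd_Ah}) as an immediate consequence of the definitions (\ref{def_Ah}), (\ref{def_norms}) and the boundedness estimates (\ref{bd}), which is exactly the term-by-term bookkeeping you carry out, with the Korn-based bound $\|\mathbf{v}\|_{1,h}\leq C\interleave\mathbf{v}\interleave_{*,h}$ (via (\ref{korn_ineq}) and the lower bound $\mu_h\geq\min\{\mu^+,\mu^-\}$) being the same bridge the paper relies on implicitly when it asserts $\|\cdot\|$ and $\|\cdot\|_*$ are norms on $(\mathbf{V},M)+\mathbf{V}M_{h,0}^{IFE}$.
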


To prove the coercivity of the bilinear form $a_h(\cdot,\cdot)$,  we need a trace inequality for IFE functions.
For all  $(\mathbf{v}_h, q_h) \in \mathbf{V}M^{IFE}_h(T)$ on an interface element $T\in\mathcal{T}_h^\Gamma$,  since $\mathbf{v}_h\in (H^1(T))^2$, we have the  standard trace inequality:
 $\| \mathbf{v}_h\|_{L^2(\partial T)}\leq C(h_T^{-1/2}\|\mathbf{v}_h\|_{L^2(T)}+h_T^{1/2} \|\nabla \mathbf{v}_h\|_{L^2(T)}).$
However, the standard  trace inequality cannot be applied to $\nabla \mathbf{v}_h$ directly  since the function  $\mathbf{v}_h$ no longer belongs to $(H^2(T))^2$.  We establish the trace inequality for IFE functions in the following lemma.
\begin{lemma}\label{trac_IFE}
For any interface element $T\in\mathcal{T}_h^\Gamma$,  there exists a positive constant $C$ independent of $h_T$ and the interface location relative to the mesh such that 
\begin{equation}\label{trac_IFE_inequality}
\|\nabla \mathbf{v}_h\|_{L^2(\partial T)}\leq Ch_T^{-1/2}\|\nabla \mathbf{v}_h\|_{L^2(T)}\quad \forall (\mathbf{v}_h, q_h) \in \mathbf{V}M^{IFE}_h(T).
\end{equation}
\end{lemma}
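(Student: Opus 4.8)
The plan is to exploit the explicit representation of IFE functions from Lemma~\ref{lem_IFEbasis}, which shows that on $T$ any $\mathbf{v}_h$ with $(\mathbf{v}_h,q_h)\in\mathbf{V}M_h^{IFE}(T)$ has the form $\mathbf{v}_h=\mathbf{v}^{J_0}+c_2(w-\pi_{h,T}^{CR}w)\mathbf{t}_h$ with $\mathbf{v}^{J_0}\in P_1(T)^2$ globally linear (hence with continuous gradient) across $\Gamma_{h,T}$. Consequently $\nabla\mathbf{v}_h$ is piecewise constant, taking a constant matrix value $G^+$ on $T_h^+$ and $G^-$ on $T_h^-$; and since $\nabla(w-\pi_{h,T}^{CR}w)$ jumps by $\mathbf{n}_h$ across $\Gamma_{h,T}$, the two values differ by a single rank-one matrix, $G^+-G^-=c_2\,\mathbf{t}_h\mathbf{n}_h^T$. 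The two norms in \eqn{trac_IFE_inequality} then reduce to elementary piecewise-constant integrals, $\|\nabla\mathbf{v}_h\|_{L^2(\partial T)}^2=|G^+|^2\ell^++|G^-|^2\ell^-$ and $\|\nabla\mathbf{v}_h\|_{L^2(T)}^2=|G^+|^2|T_h^+|+|G^-|^2|T_h^-|$, where $\ell^\pm=|\partial T\cap\overline{T_h^\pm}|$.

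The crucial step is to control the jump coefficient $c_2$ by either side's gradient, uniformly in how $\Gamma_h$ cuts $T$. Working in the $\mathbf{n}_h$--$\mathbf{t}_h$ frame and using the identities \eqn{new_add2} (only the $\partial_{\mathbf{n}_h}(\mathbf{v}_h\cdot\mathbf{t}_h)$ entry jumps) together with the linear system \eqn{eq_c1c2} that defines $c_2$, I expect a short computation to collapse the formula for $c_2$ into the clean identities $c_2=(\mu^-/\mu^+-1)(\mathbf{n}_h^TG^-\mathbf{t}_h+\mathbf{t}_h^TG^-\mathbf{n}_h)$ and, symmetrically, $c_2=(1-\mu^+/\mu^-)(\mathbf{n}_h^TG^+\mathbf{t}_h+\mathbf{t}_h^TG^+\mathbf{n}_h)$, in which the geometric quantity $\nabla\pi_{h,T}^{CR}w\cdot\mathbf{n}_h=|T_h^+|/|T|$ supplied by Lemma~\ref{lem_01} cancels out. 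These give $|c_2|\le C|G^-|$ and $|c_2|\le C|G^+|$ with $C$ depending only on $\mu^\pm$, whence $|G^+|$ and $|G^-|$ are comparable, $|G^+|\le C|G^-|$ and $|G^-|\le C|G^+|$, independently of the cut position.

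With this comparability in hand the result follows from bookkeeping: bounding $\ell^\pm\le|\partial T|\le Ch_T$ and $|T_h^+|+|T_h^-|=|T|$, and invoking shape regularity in the form $|T|\ge c\,h_T^2$, I obtain $\|\nabla\mathbf{v}_h\|_{L^2(\partial T)}^2\le Ch_T\max(|G^+|^2,|G^-|^2)$ and $\|\nabla\mathbf{v}_h\|_{L^2(T)}^2\ge\min(|G^+|^2,|G^-|^2)\,|T|\ge c\,h_T^2\min(|G^+|^2,|G^-|^2)$; the comparability $\max(|G^+|^2,|G^-|^2)\le C\min(|G^+|^2,|G^-|^2)$ then closes the argument with the factor $h_T^{-1/2}$. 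The main obstacle is exactly the degenerate regime where one of $T_h^+,T_h^-$ is a vanishing sliver: there the inverse-trace constant on that sub-element blows up and the standard trace inequality applied to a single small piece is useless. What rescues the estimate is that $c_2$—and hence the entire gradient on the sliver—is slaved to the gradient on the large piece through the viscosity-weighted shear identity above, so the dangerous term $|G^+|^2\ell^+$ (with $T_h^+$ the sliver) is reabsorbed into the volume contribution of $T_h^-$. Note that the pressure $q_h$ never enters, only the velocity structure matters.
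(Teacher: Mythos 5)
Your proof is correct, and it takes a genuinely different route from the paper's. The paper also starts from the representation $\mathbf{v}_h=\boldsymbol{\pi}_{h,T}^{CR}\mathbf{v}_h+c_2(w-\pi_{h,T}^{CR}w)\mathbf{t}_h$, but then it bounds the boundary norm by $Ch_T^{-1/2}\|\nabla\boldsymbol{\pi}_{h,T}^{CR}\mathbf{v}_h\|_{L^2(T)}$, estimating $|c_2|\leq C|\nabla \boldsymbol{\pi}_{h,T}^{CR}\mathbf{v}_h|$ via the lower bound (\ref{pro_basi_fenmu}) on the denominator (i.e., via Lemma~\ref{lem_01}), and closes the argument with a separate stability estimate $\|\nabla\boldsymbol{\pi}_{h,T}^{CR}\mathbf{v}_h\|_{L^2(T)}\leq C\|\nabla\mathbf{v}_h\|_{L^2(T)}$, proved by a Poincar\'e/trace argument (this is (\ref{pihvleqv})). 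You instead never pass through the CR interpolant: you observe that $\nabla\mathbf{v}_h$ takes two constant values $G^\pm$ with rank-one jump $G^+-G^-=c_2\mathbf{t}_h\mathbf{n}_h^T$ (a consequence of $[\![\mathbf{v}^\pm]\!]|_{\Gamma_{h,T}}=\mathbf{0}$ and $[\![\nabla\cdot\mathbf{v}^\pm]\!]=0$), and that the tangential stress condition gives $\mu^+\big(\mathbf{t}_h^TG^+\mathbf{n}_h+\mathbf{n}_h^TG^+\mathbf{t}_h\big)=\mu^-\big(\mathbf{t}_h^TG^-\mathbf{n}_h+\mathbf{n}_h^TG^-\mathbf{t}_h\big)$; your two identities for $c_2$ follow at once (I checked the cancellation of $\nabla\pi_{h,T}^{CR}w\cdot\mathbf{n}_h$ in the collapse of (\ref{eq_c1c2}); it does work), so $|G^+|$ and $|G^-|$ are comparable with constants depending only on $\mu^\pm$, and the inequality reduces to $\ell^++\ell^-\leq Ch_T$, $|T_h^+|+|T_h^-|=|T|\geq ch_T^2$. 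What each approach buys: yours is more elementary and self-contained — it needs neither Lemma~\ref{lem_01} (the denominator cancels out of your identities) nor the interpolation-stability inequality, and it makes completely transparent why a vanishing sliver $T_h^\pm$ is harmless, since the sliver's gradient is slaved to the other side's. The paper's route, on the other hand, produces (\ref{pihvleqv}) as a byproduct, which is reused several times later (in Lemma~\ref{lem_jumpe}, in (\ref{qhjmpt}), and in the inf-sup proof), so it earns its extra lemma. One cosmetic caution: your boundary identity $\|\nabla\mathbf{v}_h\|_{L^2(\partial T)}^2=|G^+|^2\ell^++|G^-|^2\ell^-$ implicitly fixes a gradient convention and ignores the finitely many cut points on $\partial T$, both of which are harmless.
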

\begin{proof}
From Lemma~\ref{lem_IFEbasis} and the definition~(\ref{cons_j1j2}), we know   $\mathbf{v}_h=\boldsymbol{\pi}_{h,T}^{CR}\mathbf{v}_h+c_2(w-\pi_{h,T}^{CR}w)\mathbf{t}_h$ with $w$ and $c_2$ defined in (\ref{def_zw}) and (\ref{ex_c1c2}), respectively. Using the facts  $\boldsymbol{\pi}_{h,T}^{CR}\mathbf{v}_h \in P_1(T)^2$, $\pi_{h,T}^{CR}w\in P_1(T)$, $|\nabla w^+|=1$,  and  (\ref{est_piw}),  we have
\begin{equation*}
\begin{aligned}
\|\nabla \mathbf{v}_h\|_{L^2(\partial T)}&\leq \|\nabla \boldsymbol{\pi}_{h,T}^{CR}\mathbf{v}_h \|_{L^2(\partial T)} +|c_2|\left(\|\nabla \pi_{h,T}^{CR}w \|_{L^2(\partial T)}+ \|\nabla w^+\|_{L^2(\partial T)}\right)\\
&\leq Ch_T^{-1/2}\|\nabla \boldsymbol{\pi}_{h,T}^{CR} \mathbf{v}_h\|_{L^2(T)}+C|c_2|h_T^{1/2}.
\end{aligned}
\end{equation*}
From  (\ref{ex_c1c2}) and (\ref{pro_basi_fenmu}), the constant $|c_2|$ can be estimated as 
\begin{equation*}
|c_2|=\left|\frac{\sigma(\mu^-/\mu^+-1, \boldsymbol{\pi}_{h,T}^{CR} \mathbf{v}_h, 0)\mathbf{n}_h\cdot\mathbf{t}_h}{1+(\mu^-/\mu^+-1)\nabla\pi_{h,T}^{CR}w\cdot \mathbf{n}_h}\right|\leq C|\nabla \boldsymbol{\pi}_{h,T}^{CR} \mathbf{v}_h|.
\end{equation*}
Combining the above inequalities, we obtain 
\begin{equation}\label{pro_stabipih1}
\|\nabla \mathbf{v}_h\|_{L^2(\partial T)}\leq Ch_T^{-1/2}\|\nabla \boldsymbol{\pi}_{h,T}^{CR} \mathbf{v}_h\|_{L^2(T)}.
\end{equation}

Let $e_i$, $i=1,2,3$ be edges of $T$ and $\mathbf{v}_h=(v_{h1}, v_{h2})^T$, then $\boldsymbol{\pi}_{h,T}^{CR} \mathbf{v}_h=(\pi_{h,T}^{CR} v_{h1}, \pi_{h,T}^{CR} v_{h2})^T$. By choosing a constant $c_T=|T|^{-1}\int_T\pi_{h,T}^{CR} v_{h1}$ we can derive 
\begin{equation*}
\begin{aligned}
\|\nabla \pi_{h,T}^{CR} &v_{h1}\|_{L^2(T)}=\|\nabla \pi_{h,T}^{CR} (v_{h1}-c_T)\|_{L^2(T)}\leq \sum_{i=1}^3 \frac{1}{|e_i|}\left|\int_{e_i} (v_{h1}-c_T)\right||\lambda_i|_{H^1(T)}\\
&\leq C\sum_{i=1}^3 h_T^{-1/2}\|v_{h1}-c_T\|_{L^2(e_i)}\leq C\left(h_T^{-1}\| v_{h1}-c_T\|_{L^2(T)}+|v_{h1}|_{H^1(T)}\right)\leq C|v_{h1}|_{H^1(T)},
\end{aligned}
\end{equation*}
which together with a similar estimate for $\pi_{h,T}^{CR} v_{h2}$ implies 
\begin{equation}\label{pihvleqv}
\|\nabla \boldsymbol{\pi}_{h,T}^{CR} \mathbf{v}_h\|_{L^2(T)}\leq C\|\nabla\mathbf{v}_h\|_{L^2(T)}.
\end{equation} 
Substituting (\ref{pihvleqv}) into (\ref{pro_stabipih1}) we complete  the proof of the lemma.
\end{proof}

 The coercivity of $a_h(\cdot,\cdot)$ is shown in the following lemma.
\begin{lemma}\label{lem_cor}
There exists a positive constant $C_{a}$ independent of $h$ and the interface location relative to the mesh such that 
\begin{equation}\label{inequ_cor}
a_h(\mathbf{v}_h,\mathbf{v}_h)\geq C_{a}\|\mathbf{v}_h\|^2_{1,h}\qquad \forall (\mathbf{v}_h,q_h)\in\mathbf{V}M_{h,0}^{IFE}
\end{equation}
is true for  $\delta=-1$ with an arbitrary $\eta\geq 0$ and is true for $\delta=1$ with a  sufficiently large $\eta$. 
\end{lemma}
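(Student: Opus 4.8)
The plan is to test the bilinear form on the diagonal, i.e.\ to set $\mathbf{u}_h=\mathbf{v}_h$ in \eqref{def_Ah} and study
\begin{equation*}
a_h(\mathbf{v}_h,\mathbf{v}_h)=\sum_{T\in\mathcal{T}_h}\int_T 2\mu_h|\boldsymbol{\epsilon}(\mathbf{v}_h)|^2+\sum_{e\in\mathcal{E}_h}\frac1{|e|}\|[\mathbf{v}_h]_e\|_{L^2(e)}^2-(1+\delta)\sum_{e\in\mathcal{E}_h^\Gamma}\int_e\{2\mu_h\boldsymbol{\epsilon}(\mathbf{v}_h)\mathbf{n}_e\}_e\cdot[\mathbf{v}_h]_e+\eta\sum_{e\in\mathcal{E}_h^\Gamma}\frac1{|e|}\|[\mathbf{v}_h]_e\|_{L^2(e)}^2.
\end{equation*}
The decisive observation is that the two consistency integrals coalesce into a single cross term with prefactor $(1+\delta)$, so the two parameter regimes separate cleanly. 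In the nonsymmetric case $\delta=-1$ the cross term vanishes identically and every remaining term is nonnegative; discarding the $\eta$-penalty I would bound $a_h(\mathbf{v}_h,\mathbf{v}_h)\ge 2\mu_{\min}\|\boldsymbol{\epsilon}(\mathbf{v}_h)\|_{L^2(\Omega)}^2+\sum_{e\in\mathcal{E}_h}|e|^{-1}\|[\mathbf{v}_h]_e\|_{L^2(e)}^2$ with $\mu_{\min}:=\min\{\mu^+,\mu^-\}$, and then invoke the Korn inequality \eqref{korn_ineq} to absorb the right-hand side into $\|\mathbf{v}_h\|_{1,h}^2$. This settles \eqref{inequ_cor} for every $\eta\ge0$, with $C_a$ depending only on $\mu_{\min}$ and the Korn constant.

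For the symmetric case $\delta=1$ the cross term is $-2\sum_{e\in\mathcal{E}_h^\Gamma}\int_e\{2\mu_h\boldsymbol{\epsilon}(\mathbf{v}_h)\mathbf{n}_e\}_e\cdot[\mathbf{v}_h]_e$ and must be controlled by Young's inequality with a free parameter $\varepsilon>0$,
\begin{equation*}
2\Bigl|\int_e\{2\mu_h\boldsymbol{\epsilon}(\mathbf{v}_h)\mathbf{n}_e\}_e\cdot[\mathbf{v}_h]_e\Bigr|\le \varepsilon\,|e|\,\|\{2\mu_h\boldsymbol{\epsilon}(\mathbf{v}_h)\mathbf{n}_e\}_e\|_{L^2(e)}^2+\frac1{\varepsilon\,|e|}\|[\mathbf{v}_h]_e\|_{L^2(e)}^2.
\end{equation*}
The only place where the IFE structure genuinely enters is the estimate of the first factor. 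Each element adjacent to an interface edge $e$ is either an interface element, on which the IFE trace inequality of Lemma~\ref{trac_IFE} applies, or a non-interface element, on which $\mathbf{v}_h$ is affine and the classical trace inequality applies; in either case one gets $|e|\,\|2\mu_h\boldsymbol{\epsilon}(\mathbf{v}_h)\mathbf{n}_e\|_{L^2(e\cap\partial T)}^2\le C|\mathbf{v}_h|_{H^1(T)}^2$ with $C$ independent of the interface location (using $|e|\le h_T$). Summing over interface edges and using shape regularity (each element has at most three edges) then yields $\sum_{e\in\mathcal{E}_h^\Gamma}|e|\,\|\{2\mu_h\boldsymbol{\epsilon}(\mathbf{v}_h)\mathbf{n}_e\}_e\|_{L^2(e)}^2\le C_t\|\mathbf{v}_h\|_{1,h}^2$.

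Assembling these bounds I would arrive at
\begin{equation*}
a_h(\mathbf{v}_h,\mathbf{v}_h)\ge 2\mu_{\min}\|\boldsymbol{\epsilon}(\mathbf{v}_h)\|_{L^2(\Omega)}^2+\sum_{e\in\mathcal{E}_h}\frac1{|e|}\|[\mathbf{v}_h]_e\|_{L^2(e)}^2-\varepsilon C_t\|\mathbf{v}_h\|_{1,h}^2+\Bigl(\eta-\tfrac1\varepsilon\Bigr)\sum_{e\in\mathcal{E}_h^\Gamma}\frac1{|e|}\|[\mathbf{v}_h]_e\|_{L^2(e)}^2.
\end{equation*}
Choosing $\eta\ge1/\varepsilon$ makes the last sum nonnegative, and the Korn inequality \eqref{korn_ineq} bounds the first two terms below by $c_0\|\mathbf{v}_h\|_{1,h}^2$ for some $c_0>0$; finally taking $\varepsilon$ small enough that $\varepsilon C_t<c_0$ proves \eqref{inequ_cor} with $C_a=c_0-\varepsilon C_t>0$ (whence the requirement that $\eta$ be sufficiently large, namely $\eta\ge1/\varepsilon$). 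I expect the main obstacle to be the robust control of $|e|\,\|\{2\mu_h\boldsymbol{\epsilon}(\mathbf{v}_h)\mathbf{n}_e\}_e\|_{L^2(e)}^2$: since $\nabla\mathbf{v}_h$ is only piecewise polynomial across $\Gamma_{h,T}$, the usual trace and inverse inequalities degenerate as $|T_h^\pm|\to0$, and it is precisely Lemma~\ref{trac_IFE}, with its interface-location-independent constant, that rescues the estimate; everything else is Young's inequality and Korn bookkeeping.
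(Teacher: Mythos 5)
Your proof is correct and follows essentially the same route as the paper: for $\delta=-1$ the cross terms cancel and the Korn inequality (\ref{korn_ineq}) gives coercivity directly, while for $\delta=1$ the consistency term is absorbed via Young's inequality, the IFE trace inequality of Lemma~\ref{trac_IFE} (whose interface-location-independent constant is exactly the needed ingredient), and Korn, with $\eta$ chosen large relative to the Young parameter. The only cosmetic difference is that the paper fuses the Cauchy--Schwarz and trace steps into a single estimate of the cross term, whereas you separate them into a Young step plus a standalone bound on $\sum_{e\in\mathcal{E}_h^\Gamma}|e|\|\{2\mu_h\boldsymbol{\epsilon}(\mathbf{v}_h)\mathbf{n}_e\}_e\|_{L^2(e)}^2$; the substance is identical.
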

\begin{proof}
From (\ref{trac_IFE_inequality}), the Cauchy-Schwarz inequality and the relation $|\boldsymbol{\epsilon}(\mathbf{v})|\leq C|\nabla \mathbf{v}|$, we obtain 
\begin{equation*}
\begin{aligned}
\sum_{e\in\mathcal{E}_h^\Gamma}\int_e \{ 2\mu_h\boldsymbol{\epsilon}(\mathbf{v}_h)\mathbf{n}_e\}_e\cdot[\mathbf{v}_h]_e&\leq \left(C\sum_{e\in\mathcal{E}_h^\Gamma}|e|\|\{\nabla\mathbf{v}\}_e\|_{L^2(e)}^2\right)^{1/2}\left(\sum_{e\in\mathcal{E}_h^\Gamma}|e|^{-1}\|[\mathbf{v}]_e\|_{L^2(e)}^2\right)^{1/2}\\
&\leq \left(C_{1}\sum_{T\in\mathcal{T}_h^\Gamma}|\mathbf{v}|_{H^1(T)}^2\right)^{1/2}\left(\sum_{e\in\mathcal{E}_h^\Gamma}|e|^{-1}\|[\mathbf{v}]_e\|_{L^2(e)}^2\right)^{1/2}\\
&\leq \frac{\varepsilon C_{1}}{2}\sum_{T\in\mathcal{T}_h^\Gamma}|\mathbf{v}|_{H^1(T)}^2+\frac{1}{2\varepsilon}\sum_{e\in\mathcal{E}_h^\Gamma}|e|^{-1}\|[\mathbf{v}]_e\|_{L^2(e)}^2,
\end{aligned}
\end{equation*}
where the positive constant $C_{1}$ is independent of $h$ and the interface location relative to the mesh. 
By the second inequality in (\ref{korn_ineq}), there is another constant $C_2$ independent of $h$ and the interface location relative to the mesh such that 
\begin{equation}\label{pro_coer_delta}
\sum_{T\in\mathcal{T}_h}\int_T2\mu_h \boldsymbol{\epsilon}(\mathbf{v}_h):\boldsymbol{\epsilon}(\mathbf{v}_h)+\sum_{e\in\mathcal{E}_h}\frac{1}{|e|}\int_e[\mathbf{v}_h]_e\cdot[\mathbf{v}_h]_e\geq C_2\sum_{T\in\mathcal{T}_h}|\mathbf{v}|^2_{H^1(T)}.
\end{equation}
It then follows from (\ref{def_Ah})  that, for $\delta=1$,
\begin{equation*}
\begin{aligned}
a_h(\mathbf{v}_h,\mathbf{v}_h)\geq(C_2-\varepsilon C_1)\sum_{T\in\mathcal{T}_h}|\mathbf{v}|^2_{H^1(T)}+(\eta-\varepsilon^{-1}) \sum_{e\in\mathcal{E}_h^\Gamma}|e|^{-1}\|[\mathbf{v}]_e\|_{L^2(e)}^2,
\end{aligned}
\end{equation*}
which implies the coercivity (\ref{inequ_cor} ) with $C_a=2^{-1}C_2$ when choosing $\varepsilon=C_2(2C_1)^{-1}$ and $\eta\geq\varepsilon^{-1}$. And for $\delta=-1$, the result (\ref{inequ_cor}) is a direct consequence of $(\ref{pro_coer_delta})$.
\end{proof}

\subsection{Norm-equivalence for IFE functions}
In this subsection we show that the two norms $\|\cdot\| $ and $\| \cdot \|_{*}$ are equivalent for the coupled IFE functions. First we need the following result about the coupled velocity and pressure.
\begin{lemma}\label{lem_jumpe}
For all $e\in \mathcal{E}_h$, let $\mathcal{T}_h^e$ be the set of all elements in $\mathcal{T}_h$ having $e$ as an edge, then there exists a positive  constant $C$ independent of $h$ and the interface location relative to the mesh such that, for all $(\mathbf{v}_h, q_h)\in \mathbf{V}M_h^{IFE}$, 
\begin{align}
&|e|^{-1}\left\| [\mathbf{v}_h]_{e}\right \|^2_{L^2(e)}\leq C\sum_{T\in\mathcal{T}_h^e}| \mathbf{v}_h|^2_{H^1(T)}\qquad \forall e\in\mathcal{E}_h,\label{jumpe_t}\\
&|e|\|\{q_h\}_e\|^2_{L^2(e)}\leq C\sum_{T\in\mathcal{T}_h^e}\left(|\mathbf{v}_h|^2_{H^1(T)}+\|q_h\|^2_{L^2(T)}\right)\qquad\forall e\in\mathcal{E}_h^{\Gamma}.\label{jumpe_tpre}
\end{align}
\end{lemma}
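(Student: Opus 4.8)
The plan is to prove the two inequalities separately, exploiting in each case the explicit decomposition of IFE functions from Lemma~\ref{lem_IFEbasis} together with the auxiliary bounds established in the proof of Lemma~\ref{trac_IFE}. Throughout, for an element $T$ adjacent to $e$ I will use the decomposition $\mathbf{v}_h|_T=\boldsymbol{\pi}^{CR}_{h,T}\mathbf{v}_h+c_2\,(w-\pi^{CR}_{h,T}w)\mathbf{t}_h$ (with $\mathbf{v}_h|_T\in P_1(T)^2$ on non-interface $T$, i.e. $c_2=0$), the bound $|c_2|\le C\,|\nabla\boldsymbol{\pi}^{CR}_{h,T}\mathbf{v}_h|$ and the stability estimate $\|\nabla\boldsymbol{\pi}^{CR}_{h,T}\mathbf{v}_h\|_{L^2(T)}\le C\|\nabla\mathbf{v}_h\|_{L^2(T)}$ from (\ref{pihvleqv}), as well as the elementary bounds on $w$ and $\pi^{CR}_{h,T}w$ in (\ref{est_piw}). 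Shape regularity $h_T\le\varrho\,r_T$ will be used repeatedly to convert pointwise values of the piecewise-constant gradient into $L^2$ norms, which yields $|\nabla\mathbf{v}_h^\pm|\le C h_T^{-1}|\mathbf{v}_h|_{H^1(T)}$.

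For (\ref{jumpe_t}), the key structural fact is that the correction $c_2(w-\pi^{CR}_{h,T}w)\mathbf{t}_h$ has vanishing mean on every edge (by the defining property of $\pi^{CR}_{h,T}$ in (\ref{def_opera})), so the edge average $\bar{\mathbf{v}}_T:=|e|^{-1}\int_e\mathbf{v}_h|_T$ coincides with the edge average of the linear part $\boldsymbol{\pi}^{CR}_{h,T}\mathbf{v}_h$. On an interior edge the constraint $\int_e[\mathbf{v}_h]_e=\mathbf{0}$ forces $\bar{\mathbf{v}}_{T_1}=\bar{\mathbf{v}}_{T_2}=:\bar{\mathbf{v}}$, so by the triangle inequality it suffices to show $\|\mathbf{v}_h|_T-\bar{\mathbf{v}}\|_{L^2(e)}^2\le C|e|\,|\mathbf{v}_h|_{H^1(T)}^2$ for each $T\in\mathcal{T}_h^e$. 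Splitting $\mathbf{v}_h|_T-\bar{\mathbf{v}}=(\boldsymbol{\pi}^{CR}_{h,T}\mathbf{v}_h-\bar{\mathbf{v}})+c_2(w-\pi^{CR}_{h,T}w)\mathbf{t}_h$, the first summand is a linear function minus its own edge average, for which an explicit one-dimensional computation on $e$ gives $\|\cdot\|_{L^2(e)}^2\le C|e|^3|\nabla\boldsymbol{\pi}^{CR}_{h,T}\mathbf{v}_h|^2\le C|e|\,\|\nabla\boldsymbol{\pi}^{CR}_{h,T}\mathbf{v}_h\|_{L^2(T)}^2$ (using $|e|\le h_T$), while the second is controlled by $|c_2|^2\|w-\pi^{CR}_{h,T}w\|_{L^2(e)}^2\le C|\nabla\boldsymbol{\pi}^{CR}_{h,T}\mathbf{v}_h|^2\,h_T^2|e|\le C|e|\,\|\nabla\boldsymbol{\pi}^{CR}_{h,T}\mathbf{v}_h\|_{L^2(T)}^2$; summing over $T\in\mathcal{T}_h^e$ and invoking (\ref{pihvleqv}) yields (\ref{jumpe_t}).

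For (\ref{jumpe_tpre}) the situation is genuinely harder because $q_h$ is only piecewise constant, and on an interface element the value $q^{s}$ on a tiny sub-element $T_h^{s}$ is invisible to $\|q_h\|_{L^2(T)}$; this is exactly where the velocity--pressure coupling must be used. My plan is to start from the pointwise bound $\|\{q_h\}_e\|_{L^2(e)}^2\le C|e|\sum_{T\in\mathcal{T}_h^e}\|q_h\|_{L^\infty(T)}^2$ and then estimate $\|q_h\|_{L^\infty(T)}$ on each adjacent $T$. On a non-interface $T$ this is immediate, since $q_h|_T$ is a constant with $\|q_h\|_{L^\infty(T)}^2=|T|^{-1}\|q_h\|_{L^2(T)}^2\le Ch_T^{-2}\|q_h\|_{L^2(T)}^2$. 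On an interface $T$, the value on the larger of the two sub-elements (area $\ge|T|/2$) is likewise bounded by $Ch_T^{-1}\|q_h\|_{L^2(T)}$, while the value on the smaller one is reached by adding $|q^+-q^-|$; here I take the normal component of the interface jump condition (\ref{jp_cond_IFE1}), namely $q^+-q^-=2\mu^+\partial_{\mathbf{n}_h}(\mathbf{v}^+\!\cdot\mathbf{n}_h)-2\mu^-\partial_{\mathbf{n}_h}(\mathbf{v}^-\!\cdot\mathbf{n}_h)$, which is bounded by $C|\nabla\mathbf{v}_h^\pm|\le Ch_T^{-1}|\mathbf{v}_h|_{H^1(T)}$. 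Thus $\|q_h\|_{L^\infty(T)}^2\le Ch_T^{-2}\big(\|q_h\|_{L^2(T)}^2+|\mathbf{v}_h|_{H^1(T)}^2\big)$; inserting this into the pointwise bound, multiplying by $|e|$ and using $|e|\le h_T$ gives (\ref{jumpe_tpre}).

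The main obstacle is precisely this last estimate: controlling the pressure on a sub-element whose area may degenerate to zero as the interface approaches a vertex. The decisive point is that the normal stress jump condition (\ref{jp_cond_IFE1}) bounds the pressure jump $q^+-q^-$ by velocity derivatives \emph{uniformly} in the cut position---no factor of $|T_h^s|^{-1}$ appears---so the estimate remains robust under arbitrary interface--mesh intersections, consistent with the uniform lower bound on the denominator in (\ref{pro_basi_fenmu}). For boundary edges in (\ref{jumpe_t}) the role of $\int_e[\mathbf{v}_h]_e=\mathbf{0}$ is played by the boundary constraint $\int_e\mathbf{v}_h=\mathbf{0}$ available in $\mathbf{V}M_{h,0}^{IFE}$, and the same argument applies verbatim with $\bar{\mathbf{v}}=\mathbf{0}$.
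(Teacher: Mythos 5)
Your proposal is correct, and it rests on the same structural facts the paper uses---the decomposition $\mathbf{v}_h|_T=\boldsymbol{\pi}^{CR}_{h,T}\mathbf{v}_h+c_2(w-\pi^{CR}_{h,T}w)\mathbf{t}_h$, the bound $|c_2|\le C|\nabla\boldsymbol{\pi}^{CR}_{h,T}\mathbf{v}_h|$, (\ref{est_piw}) and (\ref{pihvleqv})---but it executes both estimates somewhat differently. For (\ref{jumpe_t}) the paper simply invokes the standard Crouzeix--Raviart argument (trace plus Poincar\'e on each element, legitimate because $\mathbf{v}_h|_T\in H^1(T)^2$, citing Lemma 4.2 of \cite{2021ji_nonconform}); your explicit edge-average computation is a self-contained substitute, and its key observation---that the tangential correction has zero mean on every edge, so the matching condition $\int_e[\mathbf{v}_h]_e=\mathbf{0}$ transfers to the piecewise-linear parts---is exactly right. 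For (\ref{jumpe_tpre}) the paper is shorter: it writes $q_h=\pi^0_{h,T}q_h+c_1(z-\pi^0_{h,T}z)$ with $c_1=\sigma(\mu^--\mu^+,\boldsymbol{\pi}^{CR}_{h,T}\mathbf{v}_h,0)\mathbf{n}_h\cdot\mathbf{n}_h$ from Lemma~\ref{lem_IFEbasis} and bounds the two pieces; you instead bound the pressure value on the larger sub-element by $Ch_T^{-1}\|q_h\|_{L^2(T)}$ and cross to the smaller one via the normal component of (\ref{jp_cond_IFE1}). The two routes are equivalent---your jump bound $|q^+-q^-|\le C\left(|\nabla\mathbf{v}^+|+|\nabla\mathbf{v}^-|\right)\le Ch_T^{-1}|\mathbf{v}_h|_{H^1(T)}$ is precisely $|c_1|$ in disguise---but yours uses only the jump condition rather than the full unisolvence formula, which makes the mechanism (the coupling controls the pressure jump uniformly in the cut position) more transparent, at the cost of a few extra lines. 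Two remarks. First, the inequality $|\nabla\mathbf{v}_h^\pm|\le Ch_T^{-1}|\mathbf{v}_h|_{H^1(T)}$ is not a consequence of shape regularity alone, since the sub-element carrying $\mathbf{v}^\pm$ may degenerate; it needs the decomposition, $|c_2|\le C|\nabla\boldsymbol{\pi}^{CR}_{h,T}\mathbf{v}_h|$ and (\ref{pihvleqv}), all of which you list, so this is an expository rather than mathematical gap---just make the chain explicit. Second, your observation about boundary edges is sharper than the paper's proof: for $(\mathbf{v}_h,q_h)$ in the full space $\mathbf{V}M_h^{IFE}$, the estimate (\ref{jumpe_t}) on $e\in\mathcal{E}_h^b$ (where $[\mathbf{v}_h]_e=\mathbf{v}_h$) fails for nonzero constants, so the constraint $\int_e\mathbf{v}_h=\mathbf{0}$ from $\mathbf{V}M_{h,0}^{IFE}$ is genuinely needed there, a point the paper passes over.
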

\begin{proof}
If $e\in\mathcal{E}_h^{non}$ and $\mathcal{T}_h^e\subset \mathcal{T}_h^{non}$, the proof of (\ref{jumpe_t}) is standard. 
For other cases, noticing that $\mathbf{v}_h|_T\in H^1(T)^2$ from (\ref{jp_cond_IFE2}), we can prove (\ref{jumpe_t}) analogously; see Lemma~4.2 in  \cite{2021ji_nonconform}.
%

Next, we prove (\ref{jumpe_tpre}). For an interface element $T\in\mathcal{T}_h^\Gamma$, from Lemma~\ref{lem_IFEbasis},  the pressure can be written as 
\begin{equation*}
q_h=\pi_{h,T}^{0}q_h+c_1(z-\pi_{h,T}^0z) \quad\mbox{ with }\quad c_1=\sigma(\mu^--\mu^+,\boldsymbol{\pi}_{h,T}^{CR}\mathbf{v}_h, 0)\mathbf{n}_h\cdot\mathbf{n}_h,
\end{equation*}
where $z$ is defined in (\ref{def_zw}). Let $e$ be an edge of $T$. Using (\ref{est_piw}) we have
\begin{equation*}
\begin{aligned}
|e|\|q_h\|^2_{L^2(e)}\leq C \|\pi_{h,T}^{0}q_h\|^2_{L^2(T)}+C |\boldsymbol{\pi}_h^{CR}\mathbf{v}_h|^2_{H^1(T)} \leq C\|q_h\|^2_{L^2(T)}+C|\mathbf{v}_h|^2_{H^1(T)},
\end{aligned}
\end{equation*}
which implies the estimate (\ref{jumpe_tpre}).
\end{proof}

We now prove the norm-equivalence in the following lemma.
\begin{lemma}\label{lem_norm_eq }
There exists positive constants $c_0$ and $C_0$ independent of $h$ and the interface location relative to the mesh such that, for all $(\mathbf{v}_h,q_h)\in\mathbf{V}M_{h,0}^{IFE}$, 
\begin{equation}\label{norm_eq1}
c_0\|\mathbf{v}_h\|_{1,h}\leq \interleave \mathbf{v}_h \interleave_{*,h}\leq C_{0}\|\mathbf{v}_h\|_{1,h}
\end{equation}
and correspondingly, 
\begin{equation}\label{norm_all_eq}
c_0 \|(\mathbf{v}_h,q_h)\|\leq \|(\mathbf{v}_h,q_h)\|_*\leq C_0\|(\mathbf{v}_h,q_h)\|.
\end{equation}
\end{lemma}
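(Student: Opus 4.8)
The plan is to prove the two equivalences (\ref{norm_eq1}) and (\ref{norm_all_eq}) by separating the velocity and pressure contributions, and to reduce the coupled statement (\ref{norm_all_eq}) to the velocity statement (\ref{norm_eq1}) together with the pressure bound (\ref{jumpe_tpre}). The lower bounds are the easy direction. For (\ref{norm_eq1}), note that $\interleave\mathbf{v}_h\interleave_{*,h}^2$ is obtained by adding two nonnegative terms on $\mathcal{E}_h^\Gamma$ to $\interleave\mathbf{v}_h\interleave_{1,h}^2=\sum_T\|\sqrt{2\mu_h}\boldsymbol{\epsilon}(\mathbf{v}_h)\|_{L^2(T)}^2+\sum_e|e|^{-1}\|[\mathbf{v}_h]_e\|_{L^2(e)}^2$, so the lower bound is a direct consequence of the Korn inequality in (\ref{korn_ineq}) applied to the piecewise $H^1$ field $\mathbf{v}_h$ (recall $\mathbf{v}_h|_T\in H^1(T)^2$ by (\ref{jp_cond_IFE2}) and $\int_e[\mathbf{v}_h]_e=\mathbf{0}$). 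For (\ref{norm_all_eq}) the lower bound is then immediate, since $\|q_h\|_{*,p}^2\ge\|q_h\|_{L^2(\Omega)}^2$ by definition and the $J_h$-terms coincide, so (\ref{norm_all_eq}) reduces to (\ref{norm_eq1}).

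For the upper bound of (\ref{norm_eq1}) I would bound each of the four groups of terms defining $\interleave\mathbf{v}_h\interleave_{*,h}^2$ by $\|\mathbf{v}_h\|_{1,h}^2$. The strain-energy term is controlled by $|\boldsymbol{\epsilon}(\mathbf{v}_h)|\le|\nabla\mathbf{v}_h|$ together with the boundedness of $\mu_h$; the two jump terms $\sum_e|e|^{-1}\|[\mathbf{v}_h]_e\|_{L^2(e)}^2$ and $\sum_{e\in\mathcal{E}_h^\Gamma}(\eta+1)|e|^{-1}\|[\mathbf{v}_h]_e\|_{L^2(e)}^2$ are controlled by (\ref{jumpe_t}) of Lemma~\ref{lem_jumpe}, summed over edges (each element has only three edges, so the overlap is finite).

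The substantive term, and the main obstacle, is the interface-edge flux term $\sum_{e\in\mathcal{E}_h^\Gamma}|e|\|\{2\mu_h\boldsymbol{\epsilon}(\mathbf{v}_h)\mathbf{n}_e\}_e\|_{L^2(e)}^2$: a naive trace bound fails here because on an interface element $\nabla\mathbf{v}_h$ is only piecewise polynomial and its components do not lie in $H^1(T)$. This is precisely what the IFE trace inequality (\ref{trac_IFE_inequality}) of Lemma~\ref{trac_IFE} is designed to overcome. For $e\in\mathcal{E}_h^\Gamma$ with adjacent elements $T\in\mathcal{T}_h^e$, I would bound $\|\{2\mu_h\boldsymbol{\epsilon}(\mathbf{v}_h)\mathbf{n}_e\}_e\|_{L^2(e)}$ by $\sum_{T\in\mathcal{T}_h^e}\|\nabla\mathbf{v}_h\|_{L^2(\partial T)}$ up to the coefficient bound, then apply (\ref{trac_IFE_inequality}) on interface neighbours (and the standard trace inequality on any non-interface neighbour) to get $\|\nabla\mathbf{v}_h\|_{L^2(\partial T)}\le Ch_T^{-1/2}\|\nabla\mathbf{v}_h\|_{L^2(T)}$. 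Multiplying by $|e|\le h$ and invoking shape regularity yields $|e|\|\{2\mu_h\boldsymbol{\epsilon}(\mathbf{v}_h)\mathbf{n}_e\}_e\|_{L^2(e)}^2\le C\sum_{T\in\mathcal{T}_h^e}|\mathbf{v}_h|_{H^1(T)}^2$; summing over interface edges completes the upper bound of (\ref{norm_eq1}).

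Finally, for the upper bound of (\ref{norm_all_eq}) it remains to dominate $\|q_h\|_{*,p}^2=\|q_h\|_{L^2(\Omega)}^2+\sum_{e\in\mathcal{E}_h^\Gamma}|e|\|\{q_h\}_e\|_{L^2(e)}^2$. The first summand is already part of $\|(\mathbf{v}_h,q_h)\|^2$, while the interface-edge average term is handled by (\ref{jumpe_tpre}) of Lemma~\ref{lem_jumpe}, which after summation gives $\sum_{e\in\mathcal{E}_h^\Gamma}|e|\|\{q_h\}_e\|_{L^2(e)}^2\le C(\|\mathbf{v}_h\|_{1,h}^2+\|q_h\|_{L^2(\Omega)}^2)$. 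This step is exactly where the coupling of velocity and pressure in the IFE space enters, since (\ref{jumpe_tpre}) controls $\{q_h\}_e$ through both $|\mathbf{v}_h|_{H^1}$ and $\|q_h\|_{L^2}$ and there is no purely-pressure bound available. Combining this with the velocity upper bound from (\ref{norm_eq1}) and the identity of the $J_h$-terms gives $\|(\mathbf{v}_h,q_h)\|_*^2\le C_0^2\|(\mathbf{v}_h,q_h)\|^2$, which completes the proof.
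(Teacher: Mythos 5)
Your proposal is correct and follows essentially the same route as the paper: the paper's (very terse) proof of this lemma likewise combines the definitions in (\ref{def_norms}), the IFE trace inequality (\ref{trac_IFE_inequality}) to handle the interface-edge flux term $\sum_{e\in\mathcal{E}_h^\Gamma}|e|\|\{2\mu_h\boldsymbol{\epsilon}(\mathbf{v}_h)\mathbf{n}_e\}_e\|_{L^2(e)}^2$, the jump estimate (\ref{jumpe_t}) and the relation $|\boldsymbol{\epsilon}(\mathbf{v})|\le C|\nabla\mathbf{v}|$ to prove (\ref{norm_eq1}), and then invokes (\ref{jumpe_tpre}) together with (\ref{norm_eq1}) to get (\ref{norm_all_eq}). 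Your write-up simply makes explicit the details the paper leaves implicit, such as the Korn-inequality-based lower bound and the edge-by-edge summation.
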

\begin{proof}
The result (\ref{norm_eq1}) is obtained by using (\ref{def_norms}), (\ref{trac_IFE_inequality}),  (\ref{jumpe_t}) and the relation $|\boldsymbol{\epsilon}(\mathbf{v})|\leq C|\nabla \mathbf{v}|$. Combining (\ref{norm_eq1}), (\ref{jumpe_tpre}) and the definitions in (\ref{def_norms}), we proved  (\ref{norm_all_eq}).
\end{proof}

\subsection{The inf-sup stability}
In order to prove the stability, we first need to bound  the jump of the pressure by the coupled velocity.
\begin{lemma}
For any $T\in\mathcal{T}_h^\Gamma$, there exists a positive constant $C$  independent of $h_T$ and the interface location relative to the mesh such that
\begin{equation}\label{qhjmpt}
h_T\|[\![q_h^\pm]\!]\|^2_{L^2(\Gamma_{h,T})}\leq  C|\mathbf{v}_h|^2_{H^1(T)}\qquad \forall (\mathbf{v}_h, q_h)\in\mathbf{V}M_{h}^{IFE}(T).
\end{equation}
\end{lemma}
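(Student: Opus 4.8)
The plan is to exploit the explicit representation of IFE functions from Lemma~\ref{lem_IFEbasis}, which reduces the pressure jump on $\Gamma_{h,T}$ to the single scalar $c_1$ and thereby decouples the estimate down to a bound on $c_1$ by the velocity gradient. First I would recall from the formula (\ref{explicit_formula})--(\ref{ex_c1c2}) that every $(\mathbf{v}_h,q_h)\in\mathbf{V}M_h^{IFE}(T)$ has pressure $q_h=q^{J_0}+c_1 q^{J_1}$ with $q^{J_0}=\pi_{h,T}^0 q_h$ constant and $q^{J_1}=z-\pi_{h,T}^0 z$, where $z$ is the piecewise constant function in (\ref{def_zw}). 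Since $q^{J_0}$ and $\pi_{h,T}^0 z$ are genuine constants across $\Gamma_{h,T}$, the only surviving contribution to the jump is $[\![q_h^\pm]\!]=c_1[\![z^\pm]\!]=-c_1$, a constant along $\Gamma_{h,T}$. Hence
\begin{equation*}
\|[\![q_h^\pm]\!]\|^2_{L^2(\Gamma_{h,T})}=|c_1|^2\,|\Gamma_{h,T}|\leq |c_1|^2\, h_T,
\end{equation*}
using that $\Gamma_{h,T}$ is a chord of $T$ and therefore $|\Gamma_{h,T}|\leq h_T$. Multiplying by $h_T$ leaves me to prove $h_T^2|c_1|^2\leq C|\mathbf{v}_h|^2_{H^1(T)}$.

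Next I would control $c_1$ directly. By (\ref{ex_c1c2}) and the definition of the stress tensor with zero pressure argument, $c_1=\sigma(\mu^--\mu^+,\mathbf{v}^{J_0},0)\mathbf{n}_h\cdot\mathbf{n}_h=2(\mu^--\mu^+)\,\mathbf{n}_h^{T}\boldsymbol{\epsilon}(\mathbf{v}^{J_0})\mathbf{n}_h$ with $\mathbf{v}^{J_0}=\boldsymbol{\pi}^{CR}_{h,T}\mathbf{v}_h$. Because $\mathbf{v}^{J_0}$ is a vector-valued linear polynomial, $\boldsymbol{\epsilon}(\mathbf{v}^{J_0})$ is a constant matrix on $T$, and since $\mathbf{n}_h$ is a unit vector I obtain the pointwise bound $|c_1|\leq 2|\mu^--\mu^+|\,|\boldsymbol{\epsilon}(\mathbf{v}^{J_0})|\leq C|\nabla\boldsymbol{\pi}^{CR}_{h,T}\mathbf{v}_h|$ with $C$ depending only on $\mu^\pm$. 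I emphasize that $c_1$ carries no denominator, so no interface-location issue enters here.

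Finally I would convert this pointwise bound into the stated seminorm estimate. As $\nabla\boldsymbol{\pi}^{CR}_{h,T}\mathbf{v}_h$ is constant on $T$, we have $|\nabla\boldsymbol{\pi}^{CR}_{h,T}\mathbf{v}_h|^2=|T|^{-1}|\boldsymbol{\pi}^{CR}_{h,T}\mathbf{v}_h|^2_{H^1(T)}$, whence
\begin{equation*}
h_T^2|c_1|^2\leq C\,\frac{h_T^2}{|T|}\,|\boldsymbol{\pi}^{CR}_{h,T}\mathbf{v}_h|^2_{H^1(T)}\leq C\,|\boldsymbol{\pi}^{CR}_{h,T}\mathbf{v}_h|^2_{H^1(T)}\leq C\,|\mathbf{v}_h|^2_{H^1(T)},
\end{equation*}
where I use shape regularity in the form $|T|\geq c\,h_T^2$ (so $h_T^2/|T|$ is bounded by a constant depending only on $\varrho$) and the stability estimate (\ref{pihvleqv}) of the $CR$-interpolation. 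Combining with the first display yields (\ref{qhjmpt}) with a constant independent of $h_T$ and of the interface location.

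Given the earlier machinery, there is no serious obstacle: the only conceptual point, and the one worth stating carefully, is that the coupling built into $\mathbf{V}M_h^{IFE}(T)$ forces the pressure jump to be exactly the velocity-dependent constant $-c_1$, so the pressure jump is automatically controlled by $|\mathbf{v}_h|_{H^1(T)}$. The potentially delicate robustness in the interface position is a non-issue precisely because $c_1$ is a pure numerator; the only geometric inputs needed are the elementary facts $|\Gamma_{h,T}|\leq h_T$ and $h_T^2/|T|\leq C(\varrho)$.
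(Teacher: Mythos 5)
Your proof is correct and follows essentially the same route as the paper's: both use the explicit formula of Lemma~\ref{lem_IFEbasis} to identify the pressure jump with the constant $-c_1=-\sigma(\mu^--\mu^+,\boldsymbol{\pi}_{h,T}^{CR}\mathbf{v}_h,0)\mathbf{n}_h\cdot\mathbf{n}_h$, bound $|c_1|$ by $|\nabla\boldsymbol{\pi}_{h,T}^{CR}\mathbf{v}_h|$, and conclude via $h_T|\Gamma_{h,T}|\leq C|T|$ together with the stability estimate (\ref{pihvleqv}). Your write-up merely makes explicit the geometric ingredients ($|\Gamma_{h,T}|\leq h_T$, $h_T^2/|T|\leq C(\varrho)$) that the paper uses implicitly.
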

\begin{proof}
Using  Lemma~\ref{lem_IFEbasis} and  the facts that $[\![q^{J_1,\pm}]\!]=-1$ and $q^{J_0}\in P_0(T)$,  we have  
\begin{equation*}
[\![q_h^\pm]\!](\mathbf{x})= -\sigma(\mu^--\mu^+,\boldsymbol{\pi}_{h,T}^{CR}\mathbf{v}_h,0)\mathbf{n}_h\cdot\mathbf{n}_h\qquad \forall \mathbf{x}\in T.
\end{equation*}
We then obtain
\begin{equation*}
h_T\|[\![q_h^\pm]\!]\|^2_{L^2(\Gamma_{h,T})}\leq Ch_T|\Gamma_{h,T}||\nabla \boldsymbol{\pi}_{h,T}^{CR}\mathbf{v}_h|^2\leq C\|\nabla \boldsymbol{\pi}_{h,T}^{CR}\mathbf{v}_h\|^2_{L^2(T)}\leq C\|\nabla\mathbf{v}_h\|^2_{L^2(T)},
\end{equation*}
where we have used (\ref{pihvleqv}) in the last inequality. This completes the proof.
\end{proof}

We also need the stability of the IFE interpolation and some interpolation error estimates under the $H^1$-regularity. 
\begin{lemma}
For any $\mathbf{v}\in (H^1(T))^2$, there exists a positive constant $C$ independent of $h$ and the interface location relative to the mesh such that
\begin{align}
&|\Pi_{\mathbf{v},q}^{IFE}\mathbf{v}|_{H^1(T)}\leq C|\mathbf{v}|_{H^1(T)}\qquad \forall T\in\mathcal{T}_h^\Gamma,\label{sta_Pi}\\
&\|\mathbf{v}-\Pi_{\mathbf{v},q}^{IFE}\mathbf{v}\|_{L^2(T)}\leq Ch_T|\mathbf{v}|_{H^1(T)},~~|\mathbf{v}-\Pi_{\mathbf{v},q}^{IFE}\mathbf{v}|_{H^1(T)}\leq C|\mathbf{v}|_{H^1(T)}\qquad \forall T\in\mathcal{T}_h^\Gamma.\label{int_h1_IFE}
\end{align}
\end{lemma}
\begin{proof}
On an interface element $T\in\mathcal{T}_h^\Gamma$, it follows from  Lemma~\ref{lem_IFEbasis} and Remark~\ref{basiseq0_affect} that  
\begin{equation*}
\Pi_{\mathbf{v},q}^{IFE}\mathbf{v}=\boldsymbol{\pi}_{h,T}^{CR}\mathbf{v}+c_2(w-\pi_{h,T}^{CR}w)\mathbf{t}_h~~\mbox{with}~~c_2=\frac{\sigma(\mu^-/\mu^+-1,\boldsymbol{\pi}_{h,T}^{CR}\mathbf{v},0)\mathbf{n}_h\cdot\mathbf{t}_h}{1+(\mu^-/\mu^+-1)\nabla\pi_{h,T}^{CR}w\cdot \mathbf{n}_h},
\end{equation*}
where  $w$ is defined in (\ref{def_zw}). 
Similar to the proof of Lemma~\ref{trac_IFE},  we have
\begin{equation*}
|c_2|\leq C|\nabla\boldsymbol{\pi}_{h,T}^{CR}\mathbf{v}|, ~|\nabla w^+|=1,~w^-=0,~ |\pi_{h,T}^{CR}w|_{W_\infty^m(T)}\leq Ch^{1-m}_T,~ |\boldsymbol{\pi}_{h,T}^{CR}\mathbf{v}|_{H^1(T)}\leq |\mathbf{v}|_{H^1(T)}.
\end{equation*}
The result (\ref{sta_Pi}) then is obtained from
\begin{equation*}
\begin{aligned}
|\Pi_{\mathbf{v},q}^{IFE}\mathbf{v}|_{H^1(T)}&\leq |\boldsymbol{\pi}_{h,T}^{CR}\mathbf{v}|_{H^1(T)}+|c_2|\left(|w|_{H^1(T)}+|\pi_{h,T}^{CR}w|_{H^1(T)}\right)\\
&\leq |\boldsymbol{\pi}_{h,T}^{CR}\mathbf{v}|_{H^1(T)}+Ch_T|\boldsymbol{\pi}_{h,T}^{CR}\mathbf{v}|\leq C|\boldsymbol{\pi}_{h,T}^{CR}\mathbf{v}|_{H^1(T)}\leq C|\mathbf{v}|_{H^1(T)}.
\end{aligned}
\end{equation*}
From the definition (\ref{def_zw}), it is easy to verify  $\|w\|_{L^2(T)}\leq Ch_T^2$. Therefore,
\begin{equation*}
\begin{aligned}
\|\mathbf{v}-&\Pi_{\mathbf{v},q}^{IFE}\mathbf{v}\|_{L^2(T)}\leq \|\mathbf{v}-\boldsymbol{\pi}_{h,T}^{CR}\mathbf{v}\|_{L^2(T)}+|c_2|\left(\|w\|_{L^2(T)}+\|\pi_{h,T}^{CR}w\|_{L^2(T)}\right)\\
&\leq Ch_T|\mathbf{v}|_{H^1(T)}+Ch_T^2|\boldsymbol{\pi}_{h,T}^{CR}\mathbf{v}|\leq Ch_T|\mathbf{v}|_{H^1(T)}+Ch_T|\boldsymbol{\pi}_{h,T}^{CR}\mathbf{v}|_{H^1(T)}\leq  Ch_T|\mathbf{v}|_{H^1(T)},
\end{aligned}
\end{equation*}
which proves the first estimate of (\ref{int_h1_IFE}). The second estimate of (\ref{int_h1_IFE}) can be easily obtained by (\ref{sta_Pi}) and the triangle inequality.
\end{proof}

With these preparations, we now prove the inf-sup stability of the proposed IFE method.
\begin{lemma}
There exist a positive constant $C_{3}$ independent of $h$ and the interface location relative to the mesh such that, for all $(\mathbf{v}_h, q_h)\in\mathbf{V}M_{h,0}^{IFE}$,
\begin{equation}\label{infsup1}
C_3\|q_h\|_{L^2(\Omega)}\leq \sup_{(\widetilde{\mathbf{v}}_h, \widetilde{q}_h)\in\mathbf{V}M_{h,0}^{IFE}}\frac{b_h(\widetilde{\mathbf{v}}_h, q_h) }{\| \widetilde{\mathbf{v}}_h \|_{1,h}} + \left(\sum_{T\in\mathcal{T}_h^\Gamma}|\mathbf{v}_h|^2_{H^1(T)}\right)^{\frac{1}{2}}+J_h^{\frac{1}{2}}(q_h,q_h).
\end{equation}
\end{lemma}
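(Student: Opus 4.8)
The plan is to run a Fortin-type argument anchored to the continuous inf-sup condition, after splitting the pressure into its element-wise $P_0$ projection and a remainder supported on the interface elements. Write $\bar q_h$ for the piecewise-constant function with $\bar q_h|_T=\frac{1}{|T|}\int_T q_h$; since $\int_\Omega q_h=0$ we have $\bar q_h\in L^2_0(\Omega)$ and $\|q_h\|_{L^2(\Omega)}\le \|\bar q_h\|_{L^2(\Omega)}+\|q_h-\bar q_h\|_{L^2(\Omega)}$. First I would dispose of the remainder: by Lemma~\ref{lem_IFEbasis}, on each $T\in\mathcal{T}_h^\Gamma$ one has $q_h-\bar q_h=c_1(z-\pi_{h,T}^0 z)$ with $c_1$ a first derivative of $\boldsymbol{\pi}_{h,T}^{CR}\mathbf{v}_h$, while $q_h-\bar q_h=0$ on non-interface elements. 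Since $\|z-\pi_{h,T}^0 z\|_{L^\infty(T)}\le 1$ and $|c_1|\le C|\nabla\boldsymbol{\pi}_{h,T}^{CR}\mathbf{v}_h|$, one gets $\|q_h-\bar q_h\|^2_{L^2(T)}\le C c_1^2|T|\le C\|\nabla\boldsymbol{\pi}_{h,T}^{CR}\mathbf{v}_h\|^2_{L^2(T)}\le C|\mathbf{v}_h|^2_{H^1(T)}$ by (\ref{pihvleqv}), so $\|q_h-\bar q_h\|^2_{L^2(\Omega)}\le C\sum_{T\in\mathcal{T}_h^\Gamma}|\mathbf{v}_h|^2_{H^1(T)}$, which is exactly the second term on the right-hand side of (\ref{infsup1}).

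It then remains to bound $\|\bar q_h\|_{L^2(\Omega)}$ by the supremum. I invoke the surjectivity of the divergence on the convex polygon $\Omega$: there exists $\mathbf{w}\in H^1_0(\Omega)^2$ with $\nabla\cdot\mathbf{w}=\bar q_h$ and $\|\mathbf{w}\|_{H^1(\Omega)}\le C\|\bar q_h\|_{L^2(\Omega)}$. I take $\widetilde{\mathbf{v}}_h$ to be the velocity part of $\Pi_h^{IFE}(\mathbf{w},0)$, which by Remark~\ref{basiseq0_affect} is independent of the pressure argument. Since the local interpolations reproduce the edge integrals of $\mathbf{w}$, one has $\int_e[\widetilde{\mathbf{v}}_h]_e=\mathbf{0}$ on interior edges and $\int_e\widetilde{\mathbf{v}}_h=\mathbf{0}$ on boundary edges, so $\widetilde{\mathbf{v}}_h$ is the velocity component of an element of $\mathbf{V}M_{h,0}^{IFE}$; moreover (\ref{sta_Pi}) and the standard $CR$ estimate give $\|\widetilde{\mathbf{v}}_h\|_{1,h}\le C\|\mathbf{w}\|_{H^1(\Omega)}\le C\|\bar q_h\|_{L^2(\Omega)}$.

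The decisive computation is that of $b_h(\widetilde{\mathbf{v}}_h,q_h)$, and the structural fact I would use is that an IFE velocity has element-constant divergence: because $\mathbf{v}^{J_2}$ carries vanishing edge degrees of freedom, $\int_{\partial T}\widetilde{\mathbf{v}}_h\cdot\mathbf{n}=\int_{\partial T}\mathbf{w}\cdot\mathbf{n}$, and since $[\![\nabla\cdot\widetilde{\mathbf{v}}_h^\pm]\!]=0$ this forces $\nabla\cdot\widetilde{\mathbf{v}}_h\equiv\frac{1}{|T|}\int_T\nabla\cdot\mathbf{w}=\bar q_h$ on every $T$. Hence the volume part of $b_h$ collapses to $-\sum_T\int_T q_h\,\nabla\cdot\widetilde{\mathbf{v}}_h=-\sum_T\bar q_h\int_T q_h=-\|\bar q_h\|^2_{L^2(\Omega)}$. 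The interface-edge term $E:=\sum_{e\in\mathcal{E}_h^\Gamma}\int_e\{q_h\}_e[\widetilde{\mathbf{v}}_h\cdot\mathbf{n}_e]_e$ is the crux and I expect it to be the main obstacle. Because $\widetilde{\mathbf{v}}_h$ is nonconforming, $[\widetilde{\mathbf{v}}_h]_e$ vanishes only in the mean; I would exploit $\int_e[\widetilde{\mathbf{v}}_h\cdot\mathbf{n}_e]_e=0$ to subtract the edge mean of $\{q_h\}_e$, reducing $E$ to the tangential oscillation of $\{q_h\}_e$, which on an interface edge equals one half of the sum of the in-element pressure jumps $[\![q_h^\pm]\!]$ of the two adjacent interface elements. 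Controlling these jumps through Lemma~\ref{lem_IFEbasis} and (\ref{pihvleqv}) (packaged in (\ref{qhjmpt})) and the velocity jumps through (\ref{jumpe_t}), Cauchy-Schwarz yields $|E|\le C\big(\sum_{T\in\mathcal{T}_h^\Gamma}|\mathbf{v}_h|^2_{H^1(T)}\big)^{1/2}\|\widetilde{\mathbf{v}}_h\|_{1,h}$.

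Finally, testing with $-\widetilde{\mathbf{v}}_h$ gives $b_h(-\widetilde{\mathbf{v}}_h,q_h)=\|\bar q_h\|^2_{L^2(\Omega)}-E$, and dividing by $\|\widetilde{\mathbf{v}}_h\|_{1,h}\le C\|\bar q_h\|_{L^2(\Omega)}$ yields $\|\bar q_h\|_{L^2(\Omega)}\le C\big(\sup b_h/\|\cdot\|_{1,h}+(\sum_{T\in\mathcal{T}_h^\Gamma}|\mathbf{v}_h|^2_{H^1(T)})^{1/2}\big)$; adding the remainder estimate, and keeping the nonnegative term $J_h(q_h,q_h)^{1/2}$ on the right-hand side, gives (\ref{infsup1}). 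The genuinely delicate point is the control of the tangential oscillation of $\{q_h\}_e$ by the coupled velocity energy, uniformly in the interface position, which is exactly where the velocity-pressure coupling in the IFE space and the edge stabilization come into play.
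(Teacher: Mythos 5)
Your proposal is correct, but it takes a genuinely different route from the paper's. The paper chooses $\widetilde{\mathbf{v}}\in\mathbf{V}$ with $\nabla\cdot\widetilde{\mathbf{v}}=q_h$ (the \emph{full} IFE pressure), integrates by parts twice --- once in $\int_\Omega q_h\nabla\cdot\widetilde{\mathbf{v}}$ and once inside $b_h(\Pi^{IFE}_{\widetilde{\mathbf{v}},\widetilde{q}}\widetilde{\mathbf{v}},q_h)$ --- and is left with three residuals that map exactly onto the three terms of (\ref{infsup1}): the sup term via the stability (\ref{sta_Pi}), an edge term $\sum_{e\in\mathcal{E}_h^\Gamma}\int_e[q_h]_e\{\widetilde{\mathbf{v}}-\Pi^{IFE}_{\widetilde{\mathbf{v}},\widetilde{q}}\widetilde{\mathbf{v}}\}_e\cdot\mathbf{n}_e$ absorbed by $J_h^{1/2}(q_h,q_h)$ using (\ref{int_h1_IFE}), and a discrete-interface term $\sum_{T\in\mathcal{T}_h^\Gamma}\int_{\Gamma_{h,T}}[\![q_h^\pm]\!](\widetilde{\mathbf{v}}-\Pi^{IFE}_{\widetilde{\mathbf{v}},\widetilde{q}}\widetilde{\mathbf{v}})\cdot\mathbf{n}_h$ absorbed by the coupled-velocity term using (\ref{qhjmpt}) and a Hansbo-type trace inequality on cut elements. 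You instead split $q_h=\bar q_h+(q_h-\bar q_h)$, dispose of the remainder directly from the coupling formula of Lemma~\ref{lem_IFEbasis} and (\ref{pihvleqv}), and for $\bar q_h$ exploit a Fortin-type commuting property the paper never states: since the IFE velocity is piecewise linear with $[\![\nabla\cdot\mathbf{v}^\pm]\!]=0$ and the interpolation preserves edge integrals, $\nabla\cdot\widetilde{\mathbf{v}}_h$ is elementwise constant and equals $\bar q_h$ exactly, so the volume part of $b_h$ yields $-\|\bar q_h\|^2_{L^2(\Omega)}$ with no error; the only residual is the interface-edge consistency term, whose along-the-edge oscillation of $\{q_h\}_e$ is, as you note, half the sum of the two intra-element jumps $[\![q_h^\pm]\!]$, again controlled by the velocity coupling. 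Each approach buys something: the paper's argument needs no structural divergence identity and so is more robust to changes in the element, while yours is sharper and more economical --- it uses no trace inequality on $\Gamma_{h,T}$, and since only \emph{intra}-element pressure jumps (never the inter-element jumps $[q_h]_e$) appear, you in fact prove the inequality \emph{without} the $J_h^{1/2}(q_h,q_h)$ term, revealing that the pressure-jump stabilization is not needed for this particular bound. One caution in executing your plan: quantify the intra-element jumps by the pointwise bound $|[\![q_h^\pm]\!]|\leq C|T|^{-1/2}\|\nabla\boldsymbol{\pi}^{CR}_{h,T}\mathbf{v}_h\|_{L^2(T)}$ taken from the proof of (\ref{qhjmpt}) (i.e.\ from Lemma~\ref{lem_IFEbasis} and (\ref{pihvleqv}), as you indicate), not from (\ref{qhjmpt}) as literally stated, since that estimate carries the factor $\|\cdot\|_{L^2(\Gamma_{h,T})}$ whose length $|\Gamma_{h,T}|$ can degenerate; with that choice the Cauchy--Schwarz bookkeeping for your term $E$ closes uniformly in the interface location.
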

\begin{proof}
Let $(\mathbf{v}_h, q_h)\in\mathbf{V}M_{h,0}^{IFE}$. Since $q_h$ also belongs to the space $M$,  there is a function $\widetilde{\mathbf{v}}\in \mathbf{V} $ satisfying 
\begin{equation}\label{pro_inf_ad1}
\nabla \cdot \widetilde{\mathbf{v}}=q_h~~\mbox{ and }~~ \|\widetilde{\mathbf{v}}\|_{H^1(\Omega)}\leq C\|q_h\|_{L^2(\Omega)}
\end{equation}
with a constant $C$ only depends on $\Omega$ (see Lemma 11.2.3 in \cite{brenner2008mathematical}). Applying the integration by parts, we find 
\begin{equation}\label{pro_qhinf1}
\begin{aligned}
\|q_h\|_{L^2(\Omega)}^2&=\int_{\Omega}q_h\nabla\cdot \widetilde{\mathbf{v}}=\sum_{e\in\mathcal{E}_h}\int_e[q_h]_e\widetilde{\mathbf{v}}\cdot\mathbf{n}_e-\sum_{T\in\mathcal{T}_h^\Gamma}\int_{\Gamma_{h,T}}[\![q_h^\pm]\!]\widetilde{\mathbf{v}}\cdot\mathbf{n}_h.
\end{aligned}
\end{equation}
Since the IFE interpolation function $\Pi_{\widetilde{\mathbf{v}},\widetilde{q}}^{IFE}\widetilde{\mathbf{v}}$ is continuous on the whole element $T$ and  independent of  the pressure $\widetilde{q}$ (see Remark~\ref{basiseq0_affect}),  we apply the integration by parts again to get
\begin{equation}\label{pro_bh_eq1}
\begin{aligned}
b_h&(\Pi_{\widetilde{\mathbf{v}},\widetilde{q}}^{IFE}\widetilde{\mathbf{v}},q_h)=-\sum_{T\in\mathcal{T}_h}\int_T q_h\nabla\cdot\Pi_{\widetilde{\mathbf{v}},\widetilde{q}}^{IFE}\widetilde{\mathbf{v}} +\sum_{e\in\mathcal{E}_h^\Gamma}\int_e\{q_h\}_e[\Pi_{\widetilde{\mathbf{v}},\widetilde{q}}^{IFE}\widetilde{\mathbf{v}}]_e\cdot\mathbf{n}_e\\
&=-\sum_{e\in\mathcal{E}_h}\int_e\left([q_h]_e\{\Pi_{\widetilde{\mathbf{v}},\widetilde{q}}^{IFE}\widetilde{\mathbf{v}}\}_e\cdot\mathbf{n}_e+\{q_h\}_e[\Pi_{\widetilde{\mathbf{v}},\widetilde{q}}^{IFE}\widetilde{\mathbf{v}}]_e\cdot\mathbf{n}_e\right)+\sum_{T\in\mathcal{T}_h^\Gamma}\int_{\Gamma_{h,T}}[\![q_h^\pm]\!]\Pi_{\widetilde{\mathbf{v}},\widetilde{q}}^{IFE}\widetilde{\mathbf{v}}\cdot\mathbf{n}_h\\
&~~~~~~~~~~~~~+\sum_{e\in\mathcal{E}_h^\Gamma}\int_e\{q_h\}_e[\Pi_{\widetilde{\mathbf{v}},\widetilde{q}}^{IFE}\widetilde{\mathbf{v}}]_e\cdot\mathbf{n}_e\\
&=-\sum_{e\in\mathcal{E}_h}\int_e[q_h]_e\{\Pi_{\widetilde{\mathbf{v}},\widetilde{q}}^{IFE}\widetilde{\mathbf{v}}\}_e\cdot\mathbf{n}_e+\sum_{T\in\mathcal{T}_h^\Gamma}\int_{\Gamma_{h,T}}[\![q_h^\pm]\!]\Pi_{\widetilde{\mathbf{v}},\widetilde{q}}^{IFE}\widetilde{\mathbf{v}}\cdot\mathbf{n}_h,
\end{aligned}
\end{equation}
where we have used the facts that   $\int_{e}[\Pi_{\widetilde{\mathbf{v}},\widetilde{q}}^{IFE}\widetilde{\mathbf{v}}]_e=\mathbf{0}$ for all $e\in\mathcal{E}_h$ and $\{q_h\}_e$ is a constant for all $e\in\mathcal{E}_h^{non}$.
Combining (\ref{pro_qhinf1})-(\ref{pro_bh_eq1}) and using the facts that $[q_h]_e$ is a constant for all $e\in\mathcal{E}_h^{non}$ and $\int_e (\widetilde{\mathbf{v}}-\Pi_{\widetilde{\mathbf{v}},\widetilde{q}}^{IFE}\widetilde{\mathbf{v}})|_T=\mathbf{0}$ for all $e\in\mathcal{E}_h^{non}$ with $e\subset \partial T$, we further have
\begin{equation}\label{pro_i0}
\begin{aligned}
&\|q_h\|_{L^2(\Omega)}^2=-b_h(\Pi_{\widetilde{\mathbf{v}},\widetilde{q}}^{IFE}\widetilde{\mathbf{v}},q_h) + \left(b_h(\Pi_{\widetilde{\mathbf{v}},\widetilde{q}}^{IFE}\widetilde{\mathbf{v}},q_h)+\int_{\Omega}q_h\nabla\cdot \widetilde{\mathbf{v}}\right)\\
&=-b_h(\Pi_{\widetilde{\mathbf{v}},\widetilde{q}}^{IFE}\widetilde{\mathbf{v}},q_h)+\sum_{e\in\mathcal{E}_h^{\Gamma}}\int_e[q_h]_e\{\widetilde{\mathbf{v}}-\Pi_{\widetilde{\mathbf{v}},\widetilde{q}}^{IFE}\widetilde{\mathbf{v}}\}_e\cdot\mathbf{n}_e-\sum_{T\in\mathcal{T}_h^\Gamma}\int_{\Gamma_{h,T}}[\![q_h^\pm]\!](\widetilde{\mathbf{v}}-\Pi_{\widetilde{\mathbf{v}},\widetilde{q}}^{IFE}\widetilde{\mathbf{v}})\cdot\mathbf{n}_h\\
&:=\MakeUppercase{\romannumeral1}_1+\MakeUppercase{\romannumeral1}_2+\MakeUppercase{\romannumeral1}_3.
\end{aligned}
\end{equation}
It follows from  (\ref{sta_Pi}) and (\ref{pro_inf_ad1}) that 
\begin{equation}\label{pro_i1}
\begin{aligned}
|\MakeUppercase{\romannumeral1}_1|&=\frac{|b_h(\Pi_{\widetilde{\mathbf{v}},\widetilde{q}}^{IFE}\widetilde{\mathbf{v}},q_h)|}{\|\Pi_{\widetilde{\mathbf{v}},\widetilde{q}}^{IFE}\widetilde{\mathbf{v}}\|_{1,h}}\|\Pi_{\widetilde{\mathbf{v}},\widetilde{q}}^{IFE}\widetilde{\mathbf{v}}\|_{1,h}\leq \left( \sup_{(\widetilde{\mathbf{v}}_h, \widetilde{q}_h)\in\mathbf{V}M_{h,0}^{IFE}}\frac{b_h(\widetilde{\mathbf{v}}_h, q_h) }{\| \widetilde{\mathbf{v}}_h \|_{1,h}}\right)C|\widetilde{\mathbf{v}}|_{H^1(\Omega)}\\
&\leq C \left( \sup_{(\widetilde{\mathbf{v}}_h, \widetilde{q}_h)\in\mathbf{V}M_{h,0}^{IFE}}\frac{b_h(\widetilde{\mathbf{v}}_h, q_h) }{\| \widetilde{\mathbf{v}}_h \|_{1,h}}\right) \|q_h\|_{L^2(\Omega)}.
\end{aligned}
\end{equation}
Since $\left(\Pi_{\widetilde{\mathbf{v}},\widetilde{q}}^{IFE}\widetilde{\mathbf{v}}\right)|_{T}\in H^1(T)^2$ for all $T\in\mathcal{T}_h^\Gamma$, we use the standard trace inequality and the interpolation estimates (\ref{int_h1_IFE}) to get 
\begin{equation}\label{pro_i2}
\begin{aligned}
|\MakeUppercase{\romannumeral1}_2|&\leq \left(\sum_{e\in\mathcal{E}_h^\Gamma}|e|\|[q_h]_e\|^2_{L^2(e)}\right)^{\frac{1}{2}}\left(\sum_{e\in\mathcal{E}_h^\Gamma}|e|^{-1}\left\|\{\widetilde{\mathbf{v}}-\Pi_{\widetilde{\mathbf{v}},\widetilde{q}}^{IFE}\widetilde{\mathbf{v}}\}_e \right\|^2_{L^2(e)}\right)^{\frac{1}{2}}\\
&\leq CJ^\frac{1}{2}_h(q_h,q_h)\left(\sum_{T\in\mathcal{T}_h^\Gamma}h_T^{-2}\|\widetilde{\mathbf{v}}-\Pi_{\widetilde{\mathbf{v}},\widetilde{q}}^{IFE}\widetilde{\mathbf{v}}\|^2_{L^2(T)}+|\widetilde{\mathbf{v}}-\Pi_{\widetilde{\mathbf{v}},\widetilde{q}}^{IFE}\widetilde{\mathbf{v}}|^2_{H^1(T)}\right)^{\frac{1}{2}}\\
&\leq CJ^\frac{1}{2}_h(q_h,q_h)|\widetilde{\mathbf{v}}|_{H^1(\Omega)}\leq CJ^{\frac{1}{2}}(q_h,q_h)\|q_h\|_{L^2(\Omega)}.
\end{aligned}
\end{equation}
Similarly, by (\ref{qhjmpt}) and the following interface trace inequality (see \cite{hansbo2002unfitted,2012wuAn,2016High})
$$\|v\|^2_{L^2(\Gamma_{h,T})}\leq C(h_T^{-1}\|v\|^2_{L^2(T)}+h_T|v|^2_{H^1(T)})\qquad \forall v\in H^1(T),$$
we can bound the third term  by 
\begin{equation}\label{pro_i3}
\begin{aligned}
|\MakeUppercase{\romannumeral1}_3|&\leq \left(\sum_{T\in\mathcal{T}_h^\Gamma}h_T\|[\![q_h]\!]\|^2_{L^2(\Gamma_{h,T})}\right)^{\frac{1}{2}}\left(\sum_{T\in\mathcal{T}_h^\Gamma}h_T^{-1}\left\|\widetilde{\mathbf{v}}-\Pi_{\widetilde{\mathbf{v}},\widetilde{q}}^{IFE}\widetilde{\mathbf{v}} \right\|^2_{L^2(\Gamma_{h,T})}\right)^{\frac{1}{2}}\\
&\leq C\left(\sum_{T\in\mathcal{T}_h^\Gamma}|\mathbf{v}_h|^2_{H^1(T)}\right)^{\frac{1}{2}}\left(\sum_{T\in\mathcal{T}_h^\Gamma}h_T^{-2}\|\widetilde{\mathbf{v}}-\Pi_{\widetilde{\mathbf{v}},\widetilde{q}}^{IFE}\widetilde{\mathbf{v}}\|^2_{L^2(T)}+|\widetilde{\mathbf{v}}-\Pi_{\widetilde{\mathbf{v}},\widetilde{q}}^{IFE}\widetilde{\mathbf{v}}|^2_{H^1(T)}\right)^{\frac{1}{2}}\\
&\leq C\left(\sum_{T\in\mathcal{T}_h^\Gamma}|\mathbf{v}_h|^2_{H^1(T)}\right)^{\frac{1}{2}}|\widetilde{\mathbf{v}}|_{H^1(\Omega)} \leq    C\left(\sum_{T\in\mathcal{T}_h^\Gamma}|\mathbf{v}_h|^2_{H^1(T)}\right)^{\frac{1}{2}}\|q_h\|_{L^2(\Omega)}.
\end{aligned}
\end{equation}
Substituting (\ref{pro_i1})-(\ref{pro_i3}) into (\ref{pro_i0}) we conclude the proof.
\end{proof}

\begin{theorem}\label{lem_inf_sup_ulti}
There exists a positive constant $C_{s}$ independent of $h$ and the interface location relative to the mesh such that
\begin{equation}\label{inf_sup_ulti}
C_{s}\|(\mathbf{v}_h, q_h)\|\leq \sup_{(\mathbf{w}_h, r_h)\in \mathbf{V}M_{h,0}^{IFE}}\frac{A_h(\mathbf{v}_h, q_h; \mathbf{w}_h, r_h)}{\|(\mathbf{w}_h, r_h)\|}\qquad \forall (\mathbf{v}_h, q_h)\in \mathbf{V}M_{h,0}^{IFE}.
\end{equation}
\end{theorem}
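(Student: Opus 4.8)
The plan is to run the standard Babuška--Brezzi argument, assembling the four ingredients already at hand: the coercivity of $a_h$ on the velocity (Lemma~\ref{lem_cor}), the boundedness of $A_h$ (Lemma~\ref{lem_bd_Ah}), the pressure inf-sup estimate \eqref{infsup1}, and the norm equivalence \eqref{norm_all_eq}--\eqref{norm_eq1}. Given an arbitrary $(\mathbf{v}_h,q_h)\in\mathbf{V}M_{h,0}^{IFE}$, I want to construct a single test pair $(\mathbf{w}_h,r_h)\in\mathbf{V}M_{h,0}^{IFE}$ such that $A_h(\mathbf{v}_h,q_h;\mathbf{w}_h,r_h)\geq c\,\|(\mathbf{v}_h,q_h)\|^2$ while $\|(\mathbf{w}_h,r_h)\|\leq C\,\|(\mathbf{v}_h,q_h)\|$ with $c,C$ independent of $h$ and the interface location; the quotient then gives \eqref{inf_sup_ulti} with $C_s=c/C$.

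First I test against the pair itself, $(\mathbf{w}_h,r_h)=(\mathbf{v}_h,q_h)$. The two off-diagonal terms cancel, $b_h(\mathbf{v}_h,q_h)-b_h(\mathbf{v}_h,q_h)=0$, so $A_h(\mathbf{v}_h,q_h;\mathbf{v}_h,q_h)=a_h(\mathbf{v}_h,\mathbf{v}_h)+J_h(q_h,q_h)\geq C_a\|\mathbf{v}_h\|_{1,h}^2+J_h(q_h,q_h)$ by Lemma~\ref{lem_cor} (using that $J_h$ is symmetric and positive semidefinite). This controls the velocity energy and the pressure-jump penalty, but leaves $\|q_h\|_{L^2(\Omega)}$ uncontrolled. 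To recover it, I invoke \eqref{infsup1}: there is $\widetilde{\mathbf{v}}_h^\ast\in\mathbf{V}M_{h,0}^{IFE}$ with $\|\widetilde{\mathbf{v}}_h^\ast\|_{1,h}=1$ and $b_h(\widetilde{\mathbf{v}}_h^\ast,q_h)\geq C_3\|q_h\|_{L^2(\Omega)}-\big(\sum_{T\in\mathcal{T}_h^\Gamma}|\mathbf{v}_h|_{H^1(T)}^2\big)^{1/2}-J_h^{1/2}(q_h,q_h)$. Rescaling to $\widetilde{\mathbf{v}}_h:=\|q_h\|_{L^2(\Omega)}\,\widetilde{\mathbf{v}}_h^\ast$ (so $\|\widetilde{\mathbf{v}}_h\|_{1,h}=\|q_h\|_{L^2(\Omega)}$) and testing with $(\widetilde{\mathbf{v}}_h,0)$ gives $A_h(\mathbf{v}_h,q_h;\widetilde{\mathbf{v}}_h,0)=a_h(\mathbf{v}_h,\widetilde{\mathbf{v}}_h)+b_h(\widetilde{\mathbf{v}}_h,q_h)$, where the first term is bounded below by $-C\|\mathbf{v}_h\|_{1,h}\|q_h\|_{L^2(\Omega)}$ via Lemma~\ref{lem_bd_Ah} and \eqref{norm_all_eq}, and the second by $\|q_h\|_{L^2(\Omega)}\big(C_3\|q_h\|_{L^2(\Omega)}-\|\mathbf{v}_h\|_{1,h}-J_h^{1/2}(q_h,q_h)\big)$.

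Finally I combine the two computations by choosing $(\mathbf{w}_h,r_h)=(\mathbf{v}_h+\theta\,\widetilde{\mathbf{v}}_h,\,q_h)$ with a small parameter $\theta>0$, using bilinearity of $A_h$ in its test slot. Writing $V=\|\mathbf{v}_h\|_{1,h}$, $Q=\|q_h\|_{L^2(\Omega)}$, $J=J_h^{1/2}(q_h,q_h)$, the lower bound reads $A_h(\mathbf{v}_h,q_h;\mathbf{w}_h,r_h)\geq C_aV^2+J^2+\theta\big(C_3Q^2-(C+1)VQ-JQ\big)$. Applying Young's inequality to the cross terms $VQ$ and $JQ$ and absorbing them into $V^2$, $J^2$ and a $\tfrac{C_3}{2}Q^2$ remainder, then fixing $\theta$ small enough that the coefficients of $V^2$ and $J^2$ stay strictly positive, yields $A_h(\mathbf{v}_h,q_h;\mathbf{w}_h,r_h)\geq c\,(V^2+Q^2+J^2)=c\,\|(\mathbf{v}_h,q_h)\|^2$. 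Since $\|\mathbf{w}_h\|_{1,h}\leq V+\theta Q$, the test norm satisfies $\|(\mathbf{w}_h,r_h)\|\leq C\,\|(\mathbf{v}_h,q_h)\|$, completing the proof. The substantive difficulties have already been resolved upstream — the IFE trace inequality, the norm equivalence, and above all the pressure inf-sup \eqref{infsup1} with its reliance on the coupled interpolation and the pressure-jump bound \eqref{qhjmpt}; the only care required here is the bookkeeping of constants in the Young estimates and the choice of $\theta$, which remain independent of $h$ and the interface position precisely because every inherited constant from Lemmas~\ref{lem_cor} and \ref{lem_bd_Ah} and from \eqref{infsup1} already is.
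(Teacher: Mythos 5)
There is a genuine gap, and it is precisely the issue this paper is built around: in the space $\mathbf{V}M_{h,0}^{IFE}$ the velocity and the pressure are \emph{coupled} through the discrete stress jump condition (\ref{jp_cond_IFE1}), so you are not free to pair a given IFE velocity with an arbitrary pressure. Your middle step tests with $(\widetilde{\mathbf{v}}_h,0)$, and your final test pair is $(\mathbf{v}_h+\theta\widetilde{\mathbf{v}}_h,\,q_h)=(\mathbf{v}_h,q_h)+\theta(\widetilde{\mathbf{v}}_h,0)$. But $(\widetilde{\mathbf{v}}_h,0)$ belongs to $\mathbf{V}M_{h,0}^{IFE}$ only if $[\![\,2\mu^\pm\boldsymbol{\epsilon}(\widetilde{\mathbf{v}}_h^\pm)\mathbf{n}_h]\!]=\mathbf{0}$ on every interface element, which fails for a generic IFE velocity: by Lemma~\ref{lem_IFEbasis}, once the velocity degrees of freedom are fixed, the pressure on an interface element is forced to carry the jump $c_1q^{J_1}$ with $c_1=\sigma(\mu^--\mu^+,\mathbf{v}^{J_0},0)\mathbf{n}_h\cdot\mathbf{n}_h\neq 0$ in general. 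Since the supremum in (\ref{inf_sup_ulti}) runs only over admissible pairs in $\mathbf{V}M_{h,0}^{IFE}$, your lower bound is computed with test functions that are not allowed, and the argument does not close.

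The paper's proof repairs exactly this point. It takes the maximizing velocity $\widetilde{\mathbf{v}}_h^*$ from (\ref{infsup1}) and builds its \emph{coupled} pressure partner $\widetilde{q}_h^*$, normalized by $N_{7,T}(\widetilde{\mathbf{v}}_h^*,\widetilde{q}_h^*)=0$ on every element, so that $(\widetilde{\mathbf{v}}_h^*,\widetilde{q}_h^*)\in\mathbf{V}M_{h,0}^{IFE}$; it then proves the key estimate (\ref{cstar}), $\|\widetilde{q}_h^*\|_{*,p}+J_h^{1/2}(\widetilde{q}_h^*,\widetilde{q}_h^*)\leq C\|\widetilde{\mathbf{v}}_h^*\|_{1,h}$, using (\ref{pihvleqv}) and the explicit formula for the coupled pressure. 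The admissible test pair is $(\mathbf{v}_h+\theta k\widetilde{\mathbf{v}}_h^*,\,q_h+\theta k\widetilde{q}_h^*)$, and the price of admissibility is two extra terms, $b_h(\mathbf{v}_h,k\widetilde{q}_h^*)$ and $J_h(q_h,k\widetilde{q}_h^*)$, which are absorbed via the boundedness (\ref{bd}), the bound (\ref{cstar}), and Young's inequality; the perturbed pressure also enters the bound on $\|(\mathbf{w}_h,r_h)\|$. Your skeleton (coercivity test plus scaled inf-sup test plus small $\theta$) is the right one and matches the paper, but without constructing and controlling $\widetilde{q}_h^*$ the proof is incomplete; with it, your bookkeeping must be redone to include the two additional cross terms, which is exactly what the paper's estimate (\ref{pro_qh2sss}) accomplishes.
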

\begin{proof}
Let $(\mathbf{v}_h, q_h)\in \mathbf{V}M_{h,0}^{IFE}$.  Since $\mathbf{V}M_{h,0}^{IFE}$ is a finite-dimensional space, we assume that the supremum in (\ref{infsup1}) is achieved at $(\widetilde{\mathbf{v}}_h^*, \widetilde{q}_h^*)\in\mathbf{V}M_{h,0}^{IFE}$, i.e., 
\begin{equation}\label{pro_k}
\sup_{(\widetilde{\mathbf{v}}_h, \widetilde{q}_h)\in\mathbf{V}M_{h,0}^{IFE}}\frac{b_h(\widetilde{\mathbf{v}}_h, q_h) }{\interleave \widetilde{\mathbf{v}}_h \interleave_{1,h}} =\frac{b_h(\widetilde{\mathbf{v}}_h^*, q_h) }{\interleave \widetilde{\mathbf{v}}_h^* \interleave_{1,h}}=\frac{b_h(k\widetilde{\mathbf{v}}_h^*, q_h) }{\|q_h\|_{L^2(\Omega)}}\quad\mbox{ with }\quad k=\frac{\|q_h\|_{L^2(\Omega)}}{\interleave \widetilde{\mathbf{v}}_h^* \interleave_{1,h}}.
\end{equation}
Here the function $\widetilde{q}_h^*$ is not unique and will be specified latter.
Therefore, (\ref{infsup1}) becomes 
\begin{equation}\label{infsup2}
C_3\|q_h\|^2_{L^2(\Omega)}\leq b_h(k\widetilde{\mathbf{v}}_h^*, q_h)  + \left(\sum_{T\in\mathcal{T}_h^\Gamma}|\mathbf{v}_h|^2_{H^1(T)}\right)^{1/2}\|q_h\|_{L^2(\Omega)}+J_h^{\frac{1}{2}}(q_h,q_h)\|q_h\|_{L^2(\Omega)}.
\end{equation}

Before continuing, we discuss some properties of the coupled functions $\widetilde{\mathbf{v}}_h^*$ and $\widetilde{q}_h^*$. From Lemma~\ref{lem_IFEbasis} we know that $N_{7,T}(\widetilde{\mathbf{v}}_h^*,\widetilde{q}_h^*)$ does not affect  the function $\widetilde{\mathbf{v}}_h^*$. Thus, we let $N_{7,T}(\widetilde{\mathbf{v}}_h^*,\widetilde{q}_h^*)=0$ for all $T\in\mathcal{T}_h$. Obviously, $\widetilde{q}_h^*|_{T}=0$ for all $T\in\mathcal{T}_h^{non}$. On an interface element $T\in\mathcal{T}_h^{\Gamma}$, it follows from (\ref{explicit_formula})-(\ref{ex_c1c2}) that 
\begin{equation*}
\widetilde{q}_h^*|_{T}=
\left(\sigma(\mu^--\mu^+,\boldsymbol{\pi}_{h,T}^{CR}\widetilde{\mathbf{v}}_h^*,0)\mathbf{n}_h\cdot\mathbf{n}_h\right)q^{J_1}
\end{equation*}
with $q^{J_1}$ defined in (\ref{cons_j1j2}).
Let $e$ be an edge of $T$.  
Using the above identity  and (\ref{pihvleqv}) we can derive
\begin{equation*}
|e|\|\widetilde{q}_h^*\|^2_{L^2(e)}+\|\widetilde{q}_h^*\|^2_{L^2(T)}\leq Ch_T^2 |\nabla \boldsymbol{\pi}_{h,T}^{CR}\widetilde{\mathbf{v}}_h^*|^2 \leq C \|\nabla \boldsymbol{\pi}_{h,T}^{CR}\widetilde{\mathbf{v}}_h^*\|^2_{L^2(T)}\leq C \|\nabla \widetilde{\mathbf{v}}_h^*\|^2_{L^2(T)}.
\end{equation*}
Thus, there exists a constant $C_*$ independent of $h$ and the interface location relative to the mesh such that
\begin{equation}\label{cstar}
\|\widetilde{q}_h^*\|_{*,pre}\leq C_*\| \widetilde{\mathbf{v}}_h^*\|_{1,h}~~ \mbox{ and }~~ J^{\frac{1}{2}}_h(\widetilde{q}_h^*,\widetilde{q}_h^*)\leq C_*\| \widetilde{\mathbf{v}}_h^*\|_{1,h},
\end{equation}
which mean that $\widetilde{q}_h^*$ can be controlled by $\widetilde{\mathbf{v}}_h^*$ in a proper norm.

Now we estimate the first term on the right-hand side of (\ref{infsup2}). From (\ref{def_Ah}), (\ref{bd}), (\ref{norm_eq1}), (\ref{cstar}) and (\ref{cstar}), we have
\begin{equation*}
\begin{aligned}
&b_h(k\widetilde{\mathbf{v}}_h^*, q_h)= A_h(\mathbf{v}_h, q_h; k\widetilde{\mathbf{v}}_h^*, k\widetilde{q}_h^*)-a_h(\mathbf{v}_h, k\widetilde{\mathbf{v}}_h^*)+b_h(\mathbf{v}_h, k\widetilde{q}_h^*)- J_h(q_h, k\widetilde{q}_h^*)\\
&\leq A_h(\mathbf{v}_h, q_h; k\widetilde{\mathbf{v}}_h^*, k\widetilde{q}_h^*) + \interleave \mathbf{v}_h \interleave_{*,h}\interleave k\widetilde{\mathbf{v}}_h^* \interleave_{*,h}+C_b\|\mathbf{v}_h\|_{1,h}\|k\widetilde{q}_h^*\|_{*,pre}+J_h^{\frac{1}{2}}(q_h,q_h)J_h^{\frac{1}{2}}(k\widetilde{q}_h^*,k\widetilde{q}_h^*)\\
&\leq A_h(\mathbf{v}_h, q_h; k\widetilde{\mathbf{v}}_h^*, k\widetilde{q}_h^*) + C_0^{2}\| \mathbf{v}_h \|_{1,h} \|k\widetilde{\mathbf{v}}_h^* \|_{1,h}+C_*\left(C_b\|\mathbf{v}_h\|_{1,h}+ J_h^{\frac{1}{2}}(q_h,q_h) \right) \|k\widetilde{\mathbf{v}}_h^* \|_{1,h}.
\end{aligned}
\end{equation*}
Substituting the above inequality into (\ref{infsup2}), and using the arithmetic-geometric mean inequality: $ab\leq \frac{2}{C_3}a^2+\frac{C_3}{8}b^2$ and the fact  $\|k\widetilde{\mathbf{v}}_h^* \|_{1,h}=\|q_h\|_{L^2(\Omega)}$ from (\ref{pro_k}), we further have 
\begin{equation*}
\begin{aligned}
C_3\|q_h\|^2_{L^2(\Omega)}&\leq A_h(\mathbf{v}_h, q_h; k\widetilde{\mathbf{v}}_h^*, k\widetilde{q}_h^*) +  \frac{2C_0^{4}}{C_3}\|\mathbf{v}_h \|_{1,h}^2 + \frac{C_3}{8}\|q_h\|_{L^2(\Omega)}^2\\
&\qquad+\frac{2C_*^2C_b^2}{C_3}\|\mathbf{v}_h \|_{1,h}^2+ \frac{C_3}{8}\|q_h\|_{L^2(\Omega)}^2+ \frac{2C_*^2}{C_3}J_h(q_h, q_h)+\frac{C_3}{8}\|q_h\|_{L^2(\Omega)}^2\\
&\qquad+\frac{2}{C_3}\|\mathbf{v}_h \|_{1,h}^2+ \frac{C_3}{8}\|q_h\|_{L^2(\Omega)}^2+ \frac{2}{C_3}J_h(q_h, q_h)+\frac{C_3}{8}\|q_h\|_{L^2(\Omega)}^2,
\end{aligned}
\end{equation*}
which leads to
\begin{equation}\label{pro_qh2sss}
\begin{aligned}
\frac{3C_3}{8}\|q_h\|^2_{L^2(\Omega)}&\leq A_h(\mathbf{v}_h, q_h; k\widetilde{\mathbf{v}}_h^*, k\widetilde{q}_h^*) +  \frac{2C_0^{4}+2C_*^2C_b^2+2}{C_3}\|\mathbf{v}_h \|_{1,h}^2+ \frac{2C_*^2+2}{C_3}J_h(q_h, q_h).
\end{aligned}
\end{equation}
On the other hand, by Lemma~\ref{lem_cor} and the definition (\ref{def_Ah}), we  know
\begin{equation*}
J_h(q_h,q_h)+C_a\|\mathbf{v}_h\|^2_{1,h}\leq A_h(\mathbf{v}_h, q_h; \mathbf{v}_h, q_h).
\end{equation*}
Combining this with (\ref{pro_qh2sss})  we get 
\begin{equation}\label{pro_C2}
\begin{aligned}
C_4 \|(\mathbf{v}_h, q_h )\|^2&= C_4\left(J_h(q_h,q_h)+\|\mathbf{v}_h\|^2_{1,h} + \|q\|^2_{L^2(\Omega)}\right) \\
&\leq A_h(\mathbf{v}_h, q_h; \mathbf{v}_h + \theta k\widetilde{\mathbf{v}}_h^*, q_h + \theta k\widetilde{q}_h^*)
\end{aligned}
\end{equation}
with
\begin{equation*}
\theta=\min\left(\frac{C_3C_a}{2(2C_0^{4}+2C_*^2C_b^2+2)}, \frac{C_3}{2(2C_*^2+2)}  \right)\quad \mbox{and} \quad C_4=\min\left( \frac{3C_3\theta}{8}, \frac{1}{2}, \frac{C_a}{2} \right).
\end{equation*}
Since $(\mathbf{v}_h, q_h )\in \mathbf{V}M_{h,0}^{IFE}$ and $(k\widetilde{\mathbf{v}}_h^*, k\widetilde{q}_h^*)\in\mathbf{V}M_{h,0}^{IFE}$, it holds 
\begin{equation}\label{cost_in}
(\mathbf{v}_h + \theta k\widetilde{\mathbf{v}}_h^*, q_h + \theta k\widetilde{q}_h^*)=(\mathbf{v}_h, q_h )+\theta (k\widetilde{\mathbf{v}}_h^*, k\widetilde{q}_h^*)\in\mathbf{V}M_{h,0}^{IFE}.
\end{equation}
By (\ref{cstar}) and the fact $\|k\widetilde{\mathbf{v}}_h^* \|_{1,h}=\|q_h\|_{L^2(\Omega)}$ from (\ref{pro_k}), we see
\begin{equation*}
\|k\widetilde{\mathbf{v}}_h^* \|_{1,h}+\|k\widetilde{q}_h^*\|_{L^2(\Omega)}+ J^{\frac{1}{2}}_h(k\widetilde{q}_h^*,k\widetilde{q}_h^*)\leq (2C_*+1)\|k\widetilde{\mathbf{v}}_h^* \|_{1,h}=(2C_*+1)\|q_h\|_{L^2(\Omega)},
\end{equation*}
which leads to
\begin{equation*}
\|(k\widetilde{\mathbf{v}}_h^*, k\widetilde{q}_h^*)\|\leq \sqrt{3}(2C_*+1)\|(\mathbf{v}_h, q_h )\|.
\end{equation*}
Therefore, we have
\begin{equation}\label{adfad}
\|(\mathbf{v}_h + \theta k\widetilde{\mathbf{v}}_h^*, q_h + \theta k\widetilde{q}_h^*)\|\leq \|(\mathbf{v}_h, q_h )\|+\theta\|(k\widetilde{\mathbf{v}}_h^*, k\widetilde{q}_h^*)\|\leq \left(1+\sqrt{3}(2C_*+1)\theta \right)\|(\mathbf{v}_h, q_h )\|.
\end{equation}
Combining (\ref{pro_C2})-(\ref{adfad})  yields the desired result (\ref{inf_sup_ulti})  with $$C_s=\left(1+\sqrt{3}(2C_*+1)\theta\right)^{-1}C_4>0$$ which is independent of $h$ and the interface location relative to the mesh. 
\end{proof}

As a consequence of Theorem~\ref{lem_inf_sup_ulti}, the discrete problem (\ref{IFE_method}) is well-posed; see \cite{Brezzi} for example.

\subsection{A priori error estimates}
We first derive an optimal estimate for the IFE interpolation error in terms of the norm $\|\cdot\|_{*}$.
\begin{lemma}\label{ener_app}
Suppose $(\mathbf{v}, q)\in \widetilde{\boldsymbol{H}_2H_1}$, then there exists a constant $C$ independent of $h$ and the interface location relative to the mesh such that
\begin{equation*}
\|(\mathbf{v}, q)-\Pi_h^{IFE}(\mathbf{v},q)\|_{*} \leq Ch(\|\mathbf{v}\|_{H^2(\Omega^+\cup\Omega^-)}+\|q\|_{H^1(\Omega^+\cup\Omega^-)}).
\end{equation*}
\end{lemma}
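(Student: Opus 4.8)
The plan is to expand $\|(\mathbf{v},q)-\Pi_h^{IFE}(\mathbf{v},q)\|_*^2$ according to the definition (\ref{def_norms}) and to bound each constituent piece by $Ch^2(\|\mathbf{v}\|^2_{H^2(\Omega^+\cup\Omega^-)}+\|q\|^2_{H^1(\Omega^+\cup\Omega^-)})$. Writing $\mathbf{e}_{\mathbf{v}}:=\mathbf{v}-\Pi_{\mathbf{v},q}^{IFE}\mathbf{v}$ and $e_q:=q-\Pi_{\mathbf{v},q}^{IFE}q$, the element (volume) contributions $\sum_{T}\|\sqrt{2\mu_h}\boldsymbol{\epsilon}(\mathbf{e}_{\mathbf{v}})\|^2_{L^2(T)}$ and $\|e_q\|^2_{L^2(\Omega)}$ follow at once from Theorem~\ref{lem_interIFE_up2} together with $|\boldsymbol{\epsilon}(\mathbf{w})|\le C|\nabla\mathbf{w}|$. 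For the pure jump terms $\sum_{e\in\mathcal{E}_h}|e|^{-1}\|[\mathbf{e}_{\mathbf{v}}]_e\|^2_{L^2(e)}$ and its $\mathcal{E}_h^\Gamma$ counterpart I would use that $\mathbf{v}\in H^1_0(\Omega)^2$ is single-valued on every edge, so $[\mathbf{e}_{\mathbf{v}}]_e$ is the difference of the two element traces of $\mathbf{e}_{\mathbf{v}}$; since $\mathbf{e}_{\mathbf{v}}|_T\in H^1(T)^2$, the standard trace inequality gives $\|\mathbf{e}_{\mathbf{v}}\|^2_{L^2(e)}\le C(h_T^{-1}\|\mathbf{e}_{\mathbf{v}}\|^2_{L^2(T)}+h_T|\mathbf{e}_{\mathbf{v}}|^2_{H^1(T)})$, and summing with the $L^2$ and $H^1$ volume estimates of Theorem~\ref{lem_interIFE_up2} produces the required $O(h^2)$.

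The crux is the strain--average term $\sum_{e\in\mathcal{E}_h^\Gamma}|e|\,\|\{2\mu_h\boldsymbol{\epsilon}(\mathbf{e}_{\mathbf{v}})\mathbf{n}_e\}_e\|^2_{L^2(e)}$. The naive route fails here: $\nabla\mathbf{e}_{\mathbf{v}}$ is not in $H^1(T)$ on an interface element, and applying the trace inequality on the sub-elements $T_h^\pm$ gives constants that blow up as $|T_h^\pm|\to 0$. To obtain an interface--location--independent bound I would, on each interface element and each side $s=\pm$, split (up to the mismatch region $T^\triangle$, treated by the $\delta$-strip argument exactly as in Theorem~\ref{lem_interIFE_up2})
\[
\boldsymbol{\epsilon}(\mathbf{e}_{\mathbf{v}})^{s}=\boldsymbol{\epsilon}\big(\mathbf{v}_E^s-\boldsymbol{\pi}^{CR}_{h,T}\mathbf{v}_E^s\big)+\boldsymbol{\epsilon}\big(\boldsymbol{\pi}^{CR}_{h,T}\mathbf{v}_E^s-(\Pi_{h,T}^{IFE}(\mathbf{v},q))^s\big).
\]
The first summand is smooth-minus-polynomial, so the \emph{full}-element trace inequality yields $|e|\,\|\boldsymbol{\epsilon}(\mathbf{v}_E^s-\boldsymbol{\pi}^{CR}_{h,T}\mathbf{v}_E^s)\|_{L^2(e)}^2\le Ch_T^2|\mathbf{v}_E^s|^2_{H^2(T)}$. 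For the second summand I would invoke Lemma~\ref{lema_fenjie}, which writes it as $\sum_{i=1}^7\alpha_i(\boldsymbol{\phi}_{i,T}^{IFE})^s+\sum_{i=1}^7\beta_i\boldsymbol{\Psi}_{i,T}^{s}$, and then bound the edge traces of $\boldsymbol{\epsilon}(\boldsymbol{\phi}_{i,T}^{IFE})$ and $\boldsymbol{\epsilon}(\boldsymbol{\Psi}_{i,T})$ by the \emph{robust} $W^1_\infty$ bounds of Lemmas~\ref{lem_basisest} and \ref{lem_auxest} (equivalently, the IFE trace inequality of Lemma~\ref{trac_IFE}). Since $|e|\,\|\boldsymbol{\epsilon}(\boldsymbol{\phi}_{i,T}^{IFE})\|^2_{L^2(e)}\approx|\boldsymbol{\phi}_{i,T}^{IFE}|^2_{H^1(T)}$, the factor $|e|\approx h_T$ cancels the $h_T^{-1}$ coming from the trace, so the estimate reproduces exactly the structure already carried out in the proof of Theorem~\ref{lem_interIFE_up}.

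I would then reuse the coefficient bounds for $\alpha_i,\beta_i$ established in the proof of Theorem~\ref{lem_interIFE_up}: the $\alpha_i$ ($i\le6$) and $\beta_1,\beta_2$ are $O(h_T)$, while $\beta_3,\dots,\beta_7$ split into an $O(h_T)$ part plus the jump quantities $\|[\![\sigma(\mu^\pm,\mathbf{v}_E^\pm,q_E^\pm)\mathbf{n}]\!]\|_{L^2(T)}$, $\|[\![\nabla\mathbf{v}_E^\pm\mathbf{t}_h]\!]\|_{L^2(T)}$ and $\|[\![\nabla\cdot\mathbf{v}_E^\pm]\!]\|_{L^2(T)}$. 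Because $(\mathbf{v},q)\in\widetilde{\boldsymbol{H}_2H_1}$, these three quantities vanish on $\Gamma$, so summing over interface elements and applying Lemma~\ref{strip} on $U(\Gamma,h_\Gamma)$ renders them $O(h_\Gamma^2)$, precisely as at the end of the proof of Theorem~\ref{lem_interIFE_up}. The pressure edge terms $\sum_{e\in\mathcal{E}_h^\Gamma}|e|\,\|\{e_q\}_e\|^2_{L^2(e)}$ and $J_h(e_q,e_q)=\sum_{e\in\mathcal{E}_h^\Gamma}|e|\,\|[e_q]_e\|^2_{L^2(e)}$ are handled identically after noting that $[q]_e=0$ a.e.\ on each interface edge (as $q\in H^1(\Omega^+\cup\Omega^-)$), so that both reduce to element traces of $e_q$ and then to the same $(\alpha_i,\beta_i)$-decomposition, now using the $L^\infty$ bounds on $\varphi_{i,T}^{IFE}$ and $\psi_{i,T}$; the only point to watch is that the constant pressure mode $\varphi_{7,T}^{IFE}=1$ contributes $\alpha_7^2|e|^2\le Ch_T^2|q_E|^2_{H^1(T)}$, which is again $O(h^2)$ after summation. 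Collecting all pieces gives the squared bound and hence the claim. The main obstacle, and the sole place where the robust IFE trace inequality is indispensable, is the strain--average term, since every other contribution is either a volume estimate already available or a trace of an $H^1$ function.
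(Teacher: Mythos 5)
Your proposal is correct, but it handles the decisive term --- the strain--average on interface edges --- by a genuinely different and heavier route than the paper. The paper's proof rests on one simple observation that you missed: on an interface edge $e$, the two interfaces $\Gamma$ and $\Gamma_h$ cross $e$ at the same single point, so on each portion $e^s=e\cap\Omega^s$ the error is exactly the trace of the \emph{side-wise} function $\mathbf{v}_E^s-(\Pi_{\mathbf{v},q}^{IFE}\mathbf{v})^s$, where the polynomial piece $(\Pi_{\mathbf{v},q}^{IFE}\mathbf{v})^s\in P_1(T)^2$ is, by the paper's convention, defined on the \emph{whole} element $T$. Hence this side-wise error lies in $H^2(T)^2$ on the full element, the standard (full-element) trace inequality applies directly to its gradient, and the edge term is reduced in one step to $|\mathbf{v}_E^s-(\Pi_{\mathbf{v},q}^{IFE}\mathbf{v})^s|^2_{H^1(T)}+h_T^2|\mathbf{v}_E^s|^2_{H^2(T)}$, which Theorem~\ref{lem_interIFE_up} already controls; the pressure edge terms and $J_h$ are reduced identically to $\|q_E^s-(\Pi_{\mathbf{v},q}^{IFE}q)^s\|^2_{L^2(T)}+h_T^2|q_E^s|^2_{H^1(T)}$. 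So the sub-element trace difficulty you flag as ``the crux'' never arises, and no new decomposition is needed. Your alternative --- splitting off $\boldsymbol{\pi}^{CR}_{h,T}\mathbf{v}_E^s$, invoking Lemma~\ref{lema_fenjie}, and re-running the $\alpha_i,\beta_i$ estimates with the $W^1_\infty$/$L^\infty$ bounds of Lemmas~\ref{lem_basisest} and~\ref{lem_auxest} plus the $\delta$-strip Lemma~\ref{strip} --- does go through (the powers of $h_T$ match those in the proof of Theorem~\ref{lem_interIFE_up}, with $|e|$ cancelling the trace factor as you say), but it essentially re-proves that theorem in edge norms rather than citing it; a cheaper variant of your own idea would be to bound the second summand, which is just a $P_1$ vector on $T$, by an inverse estimate plus the triangle inequality and Theorem~\ref{lem_interIFE_up}. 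Two small inaccuracies, neither fatal: the mismatch region $T^\triangle$ plays no role in any edge estimate (precisely because $e\cap\Omega^s=e\cap\Omega_h^s$), so your parenthetical appeal to the $\delta$-strip argument there is superfluous; and single-valuedness of $\mathbf{v}$ across interior edges follows from $[\mathbf{v}]_\Gamma=\mathbf{0}$ in the definition of $\widetilde{\boldsymbol{H}_2H_1}$ (giving $\mathbf{v}\in H^1(\Omega)^2$), not from membership in $H^1_0(\Omega)^2$, which the hypothesis of Lemma~\ref{ener_app} does not assume.
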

\begin{proof}
It suffices to consider the interface edges.
Let $e^\pm=e\cap\Omega^\pm$.  The following inequality holds 
\begin{equation*}
\begin{aligned}
\|\{ 2\mu_h\boldsymbol{\epsilon}(\mathbf{v}-\Pi_{\mathbf{v},q}^{IFE}\mathbf{v})\mathbf{n}_e\}_e \|_{L^2(e)}^2 &= \sum_{s=\pm}\|\{ 2\mu_h\boldsymbol{\epsilon}(\mathbf{v}_E^s-(\Pi_{\mathbf{v},q}^{IFE}\mathbf{v})^s)\mathbf{n}_e\}_e \|_{L^2(e^s)}^2\\
&\leq \sum_{s=\pm}\|\{ 2\mu_h\boldsymbol{\epsilon}(\mathbf{v}_E^s-(\Pi_{\mathbf{v},q}^{IFE}\mathbf{v})^s)\mathbf{n}_e\}_e \|_{L^2(e)}^2,
\end{aligned}
\end{equation*}
which together with the standard trace inequality yields 
\begin{equation*}
\begin{aligned}
&\sum_{e\in\mathcal{E}_h^\Gamma}|e|\|\{ 2\mu_h\boldsymbol{\epsilon}(\mathbf{v}-\Pi_{\mathbf{v},q}^{IFE}\mathbf{v})\mathbf{n}_e\}_e \|_{L^2(e)}^2 \leq C\sum_{T\in\mathcal{T}_h^\Gamma}\sum_{s=\pm}\left(|\mathbf{v}_E^s-(\Pi_{\mathbf{v},q}^{IFE}\mathbf{v})^s|^2_{H^1(T)}+h_T^2|\mathbf{v}_E^s|^2_{H^2(T)}\right).
\end{aligned}
\end{equation*}
Analogously, we have
\begin{equation*}
\begin{aligned}
&\sum_{e\in\mathcal{E}_h^\Gamma}\frac{1}{|e|}\|[\mathbf{v}-\Pi_{\mathbf{v},q}^{IFE}\mathbf{v}]_e \|_{L^2(e)}^2\leq Ch_\Gamma^{2}\sum_{s=\pm}(\|\mathbf{v}_E^s\|^2_{H^2(\Omega)}+\|q_E^s\|^2_{H^1(\Omega)})+\sum_{T\in\mathcal{T}_h^\Gamma}\mathcal{J}(T),\\
&\sum_{e\in\mathcal{E}_h^\Gamma}|e|\|\{q- \Pi_{\mathbf{v},q}^{IFE} q\}_e\|^2_{L^2(e)}\leq C\sum_{T\in\mathcal{T}_h^\Gamma}\sum_{s=\pm}\left(\|q^s- (\Pi_{\mathbf{v},q}^{IFE} q)^s\|^2_{L^2(T)}+h_T^2|q^s|_{H^1(T)}^2\right),\\
&J_h(q- \Pi_{\mathbf{v},q}^{IFE} q, q- \Pi_{\mathbf{v},q}^{IFE} q)\leq C\sum_{T\in\mathcal{T}_h^\Gamma}\sum_{s=\pm}\left(\|q^s- (\Pi_{\mathbf{v},q}^{IFE} q)^s\|^2_{L^2(T)}+h_T^2|q^s|_{H^1(T)}^2\right),
\end{aligned}
\end{equation*}
where we have used (\ref{pro_main_ls}) in the first inequality.
Combining the above estimates, the inequality (\ref{newadded}) and Theorems~\ref{lem_interIFE_up} and \ref{lem_interIFE_up2} we complete the proof.
 \end{proof}
 
The following lemma concerns the consistent errors.
\begin{lemma}\label{lem_consis}
Let $(\mathbf{u}, p)$ and $(\mathbf{u}_h, p_h)$ be the solutions of the problems (\ref{weakform}) and (\ref{IFE_method}), respectively. Suppose $(\mathbf{u}, p)\in \widetilde{\boldsymbol{H}_2H_1}\cap (\mathbf{V}, M)$.
 Then,  there exists a constant $C$ independent of $h$ and the interface location relative to the mesh such that, for all  $(\mathbf{w}_h, r_h)\in \mathbf{V}M_h^{IFE}$,
\begin{equation*}
|A_h(\mathbf{u}-\mathbf{u}_h, p-p_h; \mathbf{w}_h,r_h)|\leq Ch\left(\|\mathbf{u}\|_{H^2(\Omega^+\cup\Omega^-)}+\|p\|_{H^1(\Omega^+\cup\Omega^-)}\right)\|(\mathbf{w}_h, r_h)\|.
\end{equation*}
\end{lemma}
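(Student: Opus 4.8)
The plan is to recast the estimate as a standard consistency (Galerkin-orthogonality) bound. Since $(\mathbf{u}_h,p_h)$ solves (\ref{IFE_method}), the discrete equation yields $A_h(\mathbf{u}_h,p_h;\mathbf{w}_h,r_h)=\int_\Omega\mathbf{f}\cdot\mathbf{w}_h$ for test pairs in $\mathbf{V}M_{h,0}^{IFE}$ (to which the lemma is applied in the error analysis), so it suffices to estimate the residual of the exact solution $R_h(\mathbf{w}_h,r_h):=A_h(\mathbf{u},p;\mathbf{w}_h,r_h)-\int_\Omega\mathbf{f}\cdot\mathbf{w}_h$. First I would simplify $A_h(\mathbf{u},p;\mathbf{w}_h,r_h)$ using the structure of $(\mathbf{u},p)\in\widetilde{\boldsymbol{H}_2H_1}\cap(\mathbf{V},M)$. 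Since $\mathbf{u}\in H_0^1(\Omega)^2$ is globally continuous, $[\mathbf{u}]_e=\mathbf{0}$ on every edge, killing the interior penalty term, the $\delta$-symmetrization term, and the interface penalty term in $a_h$; since $p$ is single-valued across mesh edges (it jumps only across $\Gamma$), $[p]_e=0$ so that $J_h(p,r_h)=0$; and since $\nabla\cdot\mathbf{u}=0$ in $\Omega$ together with $[\mathbf{u}\cdot\mathbf{n}_e]_e=0$, we get $b_h(\mathbf{u},r_h)=0$. Only the viscous and pressure volume terms and the two interface-edge terms in (\ref{def_Ah}) then remain.

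Next I would integrate by parts elementwise, splitting every interface element into $T^\pm=T\cap\Omega^\pm$ because $\mathbf{u}$ is only piecewise $H^2$, and writing $\mu_h=\mu+(\mu_h-\mu)$ so the leading part carries the true coefficient. Using $\boldsymbol{\epsilon}(\mathbf{u}):\boldsymbol{\epsilon}(\mathbf{w}_h)=\boldsymbol{\epsilon}(\mathbf{u}):\nabla\mathbf{w}_h$ and $\sigma(\mu,\mathbf{u},p)=2\mu\boldsymbol{\epsilon}(\mathbf{u})-p\mathbb{I}$, the volume contributions combine into $\sum_T\int_T\sigma(\mu,\mathbf{u},p):\nabla\mathbf{w}_h$; integration by parts, the strong form $-\nabla\cdot\sigma(\mu,\mathbf{u},p)=\mathbf{f}$ equivalent to (\ref{originalpb1}), and the true interface condition $[\sigma(\mu,\mathbf{u},p)\mathbf{n}]_\Gamma=\mathbf{0}$ from (\ref{jp_cond1}) then absorb $\int_\Omega\mathbf{f}\cdot\mathbf{w}_h$ and annihilate the integrals over $\Gamma_T$, leaving edge fluxes $\sum_{e\in\mathcal{E}_h}\int_e\{\sigma(\mu,\mathbf{u},p)\mathbf{n}_e\}_e\cdot[\mathbf{w}_h]_e$. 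Splitting $\sigma\mathbf{n}_e=2\mu\boldsymbol{\epsilon}(\mathbf{u})\mathbf{n}_e-p\mathbf{n}_e$ and recalling that the added interface-edge terms of $a_h$ and $b_h$ were designed precisely to subtract these fluxes, the interface-edge contributions cancel up to the coefficient discrepancy, giving
\begin{equation*}
R_h(\mathbf{w}_h,r_h)=\sum_{e\in\mathcal{E}_h^{non}}\int_e\left(\{2\mu\boldsymbol{\epsilon}(\mathbf{u})\mathbf{n}_e\}_e\cdot[\mathbf{w}_h]_e-\{p\}_e[\mathbf{w}_h\cdot\mathbf{n}_e]_e\right)+E^\triangle(\mathbf{w}_h),
\end{equation*}
where $E^\triangle$ gathers all terms proportional to $\mu-\mu_h$---the interface-edge sliver flux, the volume integral over the mismatch region $T^\triangle$ of (\ref{def_triangleT}), and the leftover $\Gamma_T$ boundary integrals where $\mathbf{w}_h$ is discontinuous---all supported in $U(\Gamma,Ch_\Gamma^2)$ by (\ref{triT_rela}).

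For the first sum, the classical nonconforming consistency error, I would exploit $\int_e[\mathbf{w}_h]_e=\mathbf{0}$ to subtract the edge mean values of $2\mu\boldsymbol{\epsilon}(\mathbf{u})\mathbf{n}_e$ and of $p$, then apply a trace inequality together with the Bramble--Hilbert lemma and scaling, and bound the jump factors with Lemma~\ref{lem_jumpe}; this produces $Ch(\|\mathbf{u}\|_{H^2(\Omega^+\cup\Omega^-)}+\|p\|_{H^1(\Omega^+\cup\Omega^-)})\|\mathbf{w}_h\|_{1,h}$. For $E^\triangle$, I would bound $|\mu-\mu_h|$ by a constant, estimate the exact-solution factors through the extension Lemma~\ref{lem_ext} and the $\delta$-strip Lemma~\ref{strip} over $U(\Gamma,Ch_\Gamma^2)$ (which converts the $O(h_\Gamma^2)$ measure into a factor $h_\Gamma$), and control the $\mathbf{w}_h$-factors by $|\mathbf{w}_h|_{H^1}$ via the IFE trace inequality Lemma~\ref{trac_IFE} on the element and a trace inequality on interface elements for the sliver fluxes; each such term is $O(h)$. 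Passing from $\|\mathbf{w}_h\|_{1,h}$ to the asserted norm $\|(\mathbf{w}_h,r_h)\|$ is immediate from the definitions in (\ref{def_norms}) and the norm equivalence Lemma~\ref{lem_norm_eq }.

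The step I expect to be the main obstacle is the rigorous accounting of $E^\triangle$: carefully tracking every term generated by replacing $\mu$ with $\mu_h$ and $\Gamma$ with $\Gamma_h$---in particular the $\Gamma_T$ boundary integrals where $\mathbf{w}_h$ is only continuous across $\Gamma_{h,T}$ rather than $\Gamma_T$, and the interface-edge flux over the sliver between the two interfaces---and showing that each is controlled uniformly in the interface location by the small $O(h_\Gamma^2)$ measure of the mismatch region without sacrificing the full order $h$. A secondary delicate point is to keep all test-function factors measured in the weaker norm, invoking Lemma~\ref{trac_IFE} and Lemma~\ref{lem_jumpe} at exactly the right places so that the final bound involves $\|(\mathbf{w}_h,r_h)\|$ and not the stronger $\|(\mathbf{w}_h,r_h)\|_*$.
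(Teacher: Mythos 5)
Your proposal is correct and follows essentially the same route as the paper's proof: an error representation obtained from elementwise integration by parts against the strong form, cancellation of the interface-edge terms of $a_h$ and $b_h$, and a reduction to (i) the classical nonconforming consistency sums over $\mathcal{E}_h^{non}$ and (ii) a coefficient-mismatch term supported near $\Gamma$, each bounded exactly as you describe via Lemma~\ref{lem_jumpe}, Lemma~\ref{strip}, Lemma~\ref{lem_ext} and $\|\mathbf{w}_h\|_{1,h}\leq\|(\mathbf{w}_h,r_h)\|$.

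The only substantive difference is that the step you flag as the main obstacle is in fact vacuous. First, an IFE velocity $\mathbf{w}_h$ is continuous on the \emph{whole} interface element, not merely across $\Gamma_{h,T}$: the two linear pieces agree on the segment $\Gamma_{h,T}$, hence on the entire line containing it, so $\mathbf{w}_h|_T\in C^0(T)^2$ (this is the observation the paper uses). Its trace on the curved interface $\Gamma_T$ is therefore single-valued, and the $\Gamma_T$ boundary integrals produced by integrating by parts over $T^\pm=T\cap\Omega^\pm$ combine into $-\int_{\Gamma_T}[\sigma(\mu,\mathbf{u},p)\mathbf{n}]_\Gamma\cdot\mathbf{w}_h$, which vanishes identically by (\ref{jp_cond1}); there is no leftover term to estimate. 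Second, since $\Gamma$ and $\Gamma_h$ cross each interface edge at the same single point, one has $\mu|_e=\mu_h|_e$ for every $e\in\mathcal{E}_h$, so the interface-edge fluxes written with $\mu_h$ in the scheme cancel \emph{exactly} against those produced by the integration by parts; no sliver flux survives either. Consequently your $E^\triangle$ reduces to the single volume term $-\sum_{T\in\mathcal{T}_h^\Gamma}\int_{T^\triangle}2(\mu-\mu_h)\boldsymbol{\epsilon}(\mathbf{u}):\boldsymbol{\epsilon}(\mathbf{w}_h)$, which is precisely the paper's first term and is bounded by $Ch_\Gamma\|\mathbf{u}\|_{H^2(\Omega^+\cup\Omega^-)}\|\mathbf{w}_h\|_{1,h}$ via (\ref{triT_rela}) and the strip argument, just as you propose. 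One last small point you already noticed implicitly: subtracting the discrete equation requires the test pair to lie in $\mathbf{V}M_{h,0}^{IFE}$ (as it does where the lemma is invoked), and the mean-value trick for the consistency sums needs $\int_e[\mathbf{w}_h]_e=\mathbf{0}$, which on boundary edges likewise holds only in that subspace.
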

 \begin{proof}
 Let $(\mathbf{w}_h, r_h)\in \mathbf{V}M_h^{IFE}$ be arbitrary and $\mathbf{n}_{\partial T}$ be the unit outward normal to $\partial T$.
Multiplying (\ref{originalpb1}) by $\mathbf{w}_h$ and applying integration by parts, we obtain  
\begin{equation*}
\int_\Omega \mathbf{f}\cdot\mathbf{w}_h=\sum_{T\in\mathcal{T}_h}\left(\int_T2(\mu\boldsymbol{\epsilon}(\mathbf{u})-p\mathbb{I}):\nabla \mathbf{w}_h-\int_{\partial T}\left(2\mu\boldsymbol{\epsilon}(\mathbf{u})-p\mathbb{I}\right)\mathbf{n}_{\partial T}\cdot\mathbf{w}_h \right),
\end{equation*}
where the integral on the interface $\Gamma$ is canceled due to the interface condition (\ref{jp_cond1}) and the fact that $\mathbf{w}_h|_{T}\in C^0(T)^2$ for all interface elements $T\in\mathcal{T}_h^\Gamma$. Since $(\mathbf{u}, p)\in \widetilde{\boldsymbol{H}_2H_1}$, we have $[2\mu\boldsymbol{\epsilon}(\mathbf{u})-p\mathbb{I}]_e\cdot\mathbf{n}_e=\mathbf{0}$ for all $e\in\mathcal{E}_h$, and 
\begin{equation}\label{pro_consis1}
\begin{aligned}
\int_\Omega \mathbf{f}\cdot\mathbf{w}_h&=\sum_{T\in\mathcal{T}_h}\int_T 2\mu\boldsymbol{\epsilon}(\mathbf{u}):\boldsymbol{\epsilon}(\mathbf{w}_h)-\sum_{T\in\mathcal{T}_h}\int_Tp\nabla\cdot\mathbf{w}_h \\
&\qquad+\sum_{e\in\mathcal{E}_h}\int_e \{p\}_e[\mathbf{w}_h\cdot\mathbf{n}_e]_e-\sum_{e\in\mathcal{E}_h}\int_e\{2\mu\boldsymbol{\epsilon}(\mathbf{u})\mathbf{n}_e\}_e\cdot[\mathbf{w}_h]_e.
\end{aligned}
\end{equation}
Subtracting (\ref{IFE_method})  from  (\ref{pro_consis1}) we further obtain 
\begin{equation*}
\begin{aligned}
&A_h(\mathbf{u}-\mathbf{u}_h, p-p_h; \mathbf{w}_h, r_h)=-\sum_{T\in\mathcal{T}_h^\Gamma}\int_{T^\triangle} 2(\mu-\mu_h)\boldsymbol{\epsilon}(\mathbf{u}):\boldsymbol{\epsilon}(\mathbf{w}_h) \\
&~~~-\sum_{e\in\mathcal{E}_h^{non}}\int_e p[\mathbf{w}_h\cdot\mathbf{n}_e]_e+\sum_{e\in\mathcal{E}_h^{non}}\int_e 2\mu\boldsymbol{\epsilon}(\mathbf{u})\mathbf{n}_e\cdot[\mathbf{w}_h]_e:=\MakeUppercase{\romannumeral2}_1+\MakeUppercase{\romannumeral2}_2+\MakeUppercase{\romannumeral2}_3,
\end{aligned}
\end{equation*}
where we have used the facts that $\int_{\Omega}r_h\nabla\cdot \mathbf{u}=0$ from (\ref{originalpb2}), $\mu|_e=\mu_h|_e$ for all $e\in\mathcal{E}_h$, and  $[p]_e=[\mathbf{u}]_e=0$ for all $e\in\mathcal{E}_h$ since $(\mathbf{u}, p)\in \widetilde{\boldsymbol{H}_2H_1}$.

We use (\ref{triT_rela}) and  Lemmas~\ref{strip} and \ref{lem_ext} to bound the first term  below
\begin{equation*}
\begin{aligned}
|\MakeUppercase{\romannumeral2}_1|&\leq \|2(\mu-\mu_h)\boldsymbol{\epsilon}(\mathbf{u})\|_{L^2(U(\Gamma,Ch_\Gamma^2))}\|\boldsymbol{\epsilon}(\mathbf{w}_h)\|_{L^2(U(\Gamma,Ch^2))}\leq C|\mathbf{u}|_{H^1(U(\Gamma,Ch_\Gamma^2))}\|\mathbf{w}_h\|_{1,h}\\
&\leq C\sum_{s=\pm}|\mathbf{u}_E^s|_{H^1(U(\Gamma,Ch_\Gamma^2))}\|\mathbf{w}_h\|_{1,h}\leq Ch_\Gamma\sum_{s=\pm}\|\mathbf{u}_E^s\|_{H^2(\Omega)}\|\mathbf{w}_h\|_{1,h}\\
&\leq Ch_\Gamma\sum_{s=\pm}\|\mathbf{u}^s\|_{H^2(\Omega^s)}\|\mathbf{w}_h\|_{1,h}\leq Ch\|\mathbf{u}\|_{H^2(\Omega^+\cup\Omega^-)} \|\mathbf{w}_h\|_{1,h}.
\end{aligned}
\end{equation*}
Let $\mathcal{T}_h^e$ be the set of all elements in $\mathcal{T}_h$ having $e$ as an edge. If $\mathcal{T}_h^e\cap \mathcal{T}_h^{non}\not=\emptyset$,  let $T_e\in \mathcal{T}_h^e\cap \mathcal{T}_h^{non}$.   Then, we have the standard result for the nonconforming finite elements (see, e.g., \cite{brenner2008mathematical})
\begin{equation*}
\left|\int_e p[\mathbf{w}_h\cdot\mathbf{n}_e]_e\right|\leq \inf_{c_e\in\mathbb{R}}\| p-c_e\|_{L^2(e)}\|[\mathbf{w}_h]_e\|_{L^2(e)}\leq C|p|_{H^1(T_e)}|e|^{1/2}\|[\mathbf{w}_h]_e\|_{L^2(e)}.
\end{equation*}
If $\mathcal{T}_h^e\cap \mathcal{T}_h^{non}=\emptyset$ (i.e., $\mathcal{T}_h^e\subset \mathcal{T}_h^{\Gamma}$), we have, for all $T\in \mathcal{T}_h^e$, 
\begin{equation*}
\left|\int_e p[\mathbf{w}_h\cdot\mathbf{n}_e]_e\right|\leq \sum_{s=\pm}\left|\int_e p_E^s[\mathbf{w}_h\cdot\mathbf{n}_e]_e\right| \leq C\sum_{s=\pm}|p_E^s|_{H^1(T)}|e|^{1/2}\|[\mathbf{w}_h]_e\|_{L^2(e)}.
\end{equation*}
Combining the above estimates with Lemmas~\ref{lem_ext} and \ref{lem_jumpe} we further get
\begin{equation*}
|\MakeUppercase{\romannumeral2}_2|\leq C\left(\sum_{s=\pm} |p_E^s|^2_{H^1(\Omega)}\right)^{1/2}\left(\sum_{e\in\mathcal{E}_h^{non}} |e|\|[\mathbf{w}_h]_e\|^2_{L^2(e)}\right)^{1/2}\leq Ch\|p\|_{H^1(\Omega^+\cup\Omega^-)} \|\mathbf{w}_h\|_{1,h}.
\end{equation*}
Analogously, we have the following estimate for the third term 
\begin{equation*}
|\MakeUppercase{\romannumeral2}_3| \leq Ch\|\mathbf{u}\|_{H^2(\Omega^+\cup\Omega^-)} \|\mathbf{w}_h\|_{1,h}.
\end{equation*}
This concludes the proof.
 \end{proof}

We now provide the error estimate for the proposed IFE method in the following theorem.
\begin{theorem}
Let $(\mathbf{u}, p)$ and $(\mathbf{u}_h, p_h)$ be the solutions of the problems (\ref{weakform}) and (\ref{IFE_method}), respectively. Suppose $(\mathbf{u}, p)\in \widetilde{\boldsymbol{H}_2H_1}\cap (\mathbf{V}, M)$, then the following error estimate holds
\begin{equation}\label{ulti_esti}
\| (\mathbf{u}, p)-(\mathbf{u}_h, p_h) \|_*\leq Ch\left(\|\mathbf{u}\|_{H^2(\Omega^+\cup\Omega^-)}+\|p\|_{H^1(\Omega^+\cup\Omega^-)}\right),
\end{equation}
where  the constant $C$ is independent of $h$ and the interface location relative to the mesh.
\end{theorem}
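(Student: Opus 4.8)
The plan is to run a second Strang--type argument in the saddle-point setting, feeding in the five ingredients already established: the discrete inf-sup stability (Theorem~\ref{lem_inf_sup_ulti}), the boundedness of $A_h(\cdot,\cdot;\cdot,\cdot)$ in the $\|\cdot\|_*$ norm (Lemma~\ref{lem_bd_Ah}), the optimal interpolation bound in $\|\cdot\|_*$ (Lemma~\ref{ener_app}), the consistency estimate (Lemma~\ref{lem_consis}), and the norm-equivalence \eqn{norm_all_eq}. First I would introduce the IFE interpolant $(\mathbf{u}_I,p_I):=\Pi_h^{IFE}(\mathbf{u},p)\in\mathbf{V}M_{h,0}^{IFE}$ and split the error by the triangle inequality,
\begin{equation*}
\|(\mathbf{u},p)-(\mathbf{u}_h,p_h)\|_*\leq \|(\mathbf{u},p)-(\mathbf{u}_I,p_I)\|_*+\|(\mathbf{u}_I,p_I)-(\mathbf{u}_h,p_h)\|_*.
\end{equation*}
The first summand is already controlled at order $h$ by Lemma~\ref{ener_app}, so the whole task reduces to estimating the fully discrete difference $(\mathbf{e}_h,\epsilon_h):=(\mathbf{u}_I,p_I)-(\mathbf{u}_h,p_h)\in\mathbf{V}M_{h,0}^{IFE}$.

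For $(\mathbf{e}_h,\epsilon_h)$ I would invoke the stability Theorem~\ref{lem_inf_sup_ulti}, which bounds $\|(\mathbf{e}_h,\epsilon_h)\|$ by the supremum of $A_h(\mathbf{e}_h,\epsilon_h;\mathbf{w}_h,r_h)/\|(\mathbf{w}_h,r_h)\|$ over test functions $(\mathbf{w}_h,r_h)\in\mathbf{V}M_{h,0}^{IFE}$. I then decompose the numerator as
\begin{equation*}
A_h(\mathbf{e}_h,\epsilon_h;\mathbf{w}_h,r_h)=A_h(\mathbf{u}_I-\mathbf{u},p_I-p;\mathbf{w}_h,r_h)+A_h(\mathbf{u}-\mathbf{u}_h,p-p_h;\mathbf{w}_h,r_h).
\end{equation*}
The second term is exactly the consistency residual handled by Lemma~\ref{lem_consis}, giving a bound $Ch(\|\mathbf{u}\|_{H^2(\Omega^+\cup\Omega^-)}+\|p\|_{H^1(\Omega^+\cup\Omega^-)})\|(\mathbf{w}_h,r_h)\|$. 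For the first term I apply the boundedness Lemma~\ref{lem_bd_Ah} to get $C_A\|(\mathbf{u}-\mathbf{u}_I,p-p_I)\|_*\|(\mathbf{w}_h,r_h)\|_*$, and here the norm-equivalence \eqn{norm_all_eq} is used to replace $\|(\mathbf{w}_h,r_h)\|_*$ by $C_0\|(\mathbf{w}_h,r_h)\|$ since $(\mathbf{w}_h,r_h)$ is an IFE function; the interpolation Lemma~\ref{ener_app} then makes $\|(\mathbf{u}-\mathbf{u}_I,p-p_I)\|_*$ of order $h$. Dividing by $\|(\mathbf{w}_h,r_h)\|$ and taking the supremum yields $\|(\mathbf{e}_h,\epsilon_h)\|\leq Ch(\|\mathbf{u}\|_{H^2(\Omega^+\cup\Omega^-)}+\|p\|_{H^1(\Omega^+\cup\Omega^-)})$. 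A final application of \eqn{norm_all_eq} upgrades this to the $\|\cdot\|_*$ norm, $\|(\mathbf{e}_h,\epsilon_h)\|_*\leq C_0\|(\mathbf{e}_h,\epsilon_h)\|$, and combining with the interpolation estimate for the first summand closes the bound \eqn{ulti_esti}.

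Since every hard analytical fact (the coupled interpolation rates, the trace inequality for IFE gradients, and the delicate inf-sup construction) has already been proved, the only genuinely delicate point in \emph{this} argument is the bookkeeping between the two norms $\|\cdot\|$ and $\|\cdot\|_*$: the stability and consistency bounds are phrased in the weaker norm $\|\cdot\|$, while boundedness lives in the stronger norm $\|\cdot\|_*$. I expect the main obstacle to be ensuring that the norm-equivalence \eqn{norm_all_eq} is applied only to arguments that are genuinely in $\mathbf{V}M_{h,0}^{IFE}$ (the test pair $(\mathbf{w}_h,r_h)$ and the discrete difference $(\mathbf{e}_h,\epsilon_h)$), and \emph{not} to the continuous-minus-interpolant pair $(\mathbf{u}-\mathbf{u}_I,p-p_I)$, for which only $\|\cdot\|_*$ is available and for which Lemma~\ref{ener_app} supplies the order-$h$ estimate directly. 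With this distinction respected, all constants remain independent of $h$ and of the interface location relative to the mesh, so the final estimate inherits that robustness.
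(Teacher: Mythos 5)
Your proposal is correct and follows essentially the same route as the paper's proof: inf-sup stability applied to the discrete difference, the splitting into an interpolation part (handled by boundedness of $A_h$ plus norm-equivalence for the discrete test pair and Lemma~\ref{ener_app}) and a consistency part (Lemma~\ref{lem_consis}), then the triangle inequality with the interpolant $\Pi_h^{IFE}(\mathbf{u},p)$. The only cosmetic difference is that the paper keeps a generic $(\mathbf{v}_h,q_h)$ and specializes to the interpolant at the end, while you fix it from the start; your bookkeeping of where $\|\cdot\|$ versus $\|\cdot\|_*$ may be used matches the paper's handling exactly.
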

\begin{proof}
Using (\ref{norm_all_eq}) for the equivalence of two norms, the inf-sup stability (\ref{inf_sup_ulti})  and the continuity (\ref{bd_Ah}) of the bilinear form $A_h(\cdot,\cdot)$, we have, for all $(\mathbf{v}_h, q_h)\in \mathbf{V}M_h^{IFE}$,
\begin{equation*}
\begin{aligned}
\| (\mathbf{u}_h&, p_h)-(\mathbf{v}_h, q_h) \|_*\leq C_0\| (\mathbf{u}_h, p_h)-(\mathbf{v}_h, q_h) \| \\
&\leq C_0C_s^{-1} \sup_{(\mathbf{w}_h, r_h)\in \mathbf{V}M_{h,0}^{IFE}}\frac{A_h(\mathbf{u}_h-\mathbf{v}_h, p_h-q_h; \mathbf{w}_h, r_h)}{\|(\mathbf{w}_h, r_h)\|}\\
&= C_0C_s^{-1} \sup_{(\mathbf{w}_h, r_h)\in \mathbf{V}M_{h,0}^{IFE}}\frac{A_h(\mathbf{u}-\mathbf{v}_h, p-q_h; \mathbf{w}_h, r_h)+A_h(\mathbf{u}_h-\mathbf{u}, p_h-p; \mathbf{w}_h, r_h)}{\|(\mathbf{w}_h, r_h)\|}\\
&\leq C_AC_0^2C_s^{-1}\|(\mathbf{u}-\mathbf{v}_h, p-q_h)\|_*+C_0C_s^{-1} \sup_{(\mathbf{w}_h, r_h)\in \mathbf{V}M_{h,0}^{IFE}}\frac{A_h(\mathbf{u}_h-\mathbf{u}, p_h-p; \mathbf{w}_h, r_h)}{\|(\mathbf{w}_h, r_h)\|}.
\end{aligned}
\end{equation*}
It follows from Lemma~\ref{lem_consis} and the triangle inequality that, for all $(\mathbf{v}_h, q_h)\in \mathbf{V}M_h^{IFE}$,
\begin{equation*}
\begin{aligned}
\| (\mathbf{u}, p)-(\mathbf{u}_h, p_h) \|_*&\leq \| (\mathbf{u}, p)-(\mathbf{v}_h, q_h) \|_*+\| (\mathbf{u}_h, p_h)-(\mathbf{v}_h, q_h) \|_*\\
&\leq C\| (\mathbf{u}, p)-(\mathbf{v}_h, q_h) \|_*+Ch(\|\mathbf{u}\|_{H^2(\Omega^+\cup\Omega^-)}+\|p\|_{H^1(\Omega^+\cup\Omega^-)}).
\end{aligned}
\end{equation*}
Finally, the estimate (\ref{ulti_esti}) is obtained by choosing $(\mathbf{v}_h, q_h)=\Pi_h^{IFE}(\mathbf{u}, p)$ and Lemma~\ref{ener_app}.
\end{proof}

\section{Numerical experiments}\label{sec_num}
In this section, we present some numerical experiments to validate the theoretical analysis. Consider $\Omega=(-1,1)\times(-1,1)$ as the computational domain and use uniform triangulations constructed as follows. We first partition the domain into $N\times N$ congruent rectangles, and then obtain the triangulation by cutting the rectangles along one of diagonals in the same direction. 
The interface is $\Gamma=\{(x_1,x_2)^T\in \mathbb{R}^2: x_1^2+x_2^2=r_0^2\}$ with $r_0=0.5$ and  the exact solution $(\mathbf{u}, p)$ is given  for all $\mathbf{x}=(x_1, x_2)^T$ by
\begin{equation*}
\mathbf{u}(\mathbf{x})=\left\{
\begin{aligned}
&\frac{r_0^2-|\mathbf{x}|^2}{\mu^-}\left(
\begin{array}{c}
-x_2\\
x_1
\end{array}
\right)~~~\mbox{ if }~ |\mathbf{x}|<r_0,\\
&\frac{r_0^2-|\mathbf{x}|^2}{\mu^+}\left(
\begin{array}{c}
-x_2\\
x_1
\end{array}
\right)~~~\mbox{ if }~  |\mathbf{x}|\geq r_0,\\
\end{aligned}\right.
~~\mbox{and }~~p(\mathbf{x})=x_2^2-x_1^2.
\end{equation*}
 The right-hand side  $\mathbf{f}$ and the non-homogeneous Dirichlet boundary condition $\mathbf{u}|_{\partial \Omega}$ are determined  from  the exact solution.

We set  $\delta=-1$ and $\eta=0$ and use a standard approach from the finite element  framework to deal with the non-homogeneous Dirichlet boundary condition. The resulting systems of equations are solved by  a robust sparse direct solver from the MKL PARDISO package \cite{pardiso}. Note that the explicit formulas (\ref{exp_IFE_basis}) have been used to compute the IFE basis functions.  We denote the errors by $\|e_\mathbf{u}\|_{L^2}:=\|\mathbf{u}-\mathbf{u}_h\|_{L^2(\Omega)}$, $|e_\mathbf{u}|_{H^1}:=\|\mathbf{u}-\mathbf{u}_h\|_{1,h} $ and $\|e_p\|_{L^2}:=\|p-p_h\|_{L^2(\Omega)}$ and compute them experimentally on a sequence of uniform triangulations. We test the example with the viscosity coefficient  ranging from  small  to  large jumps: $\mu^+=5$, $\mu^-=1$; $\mu^+=1$, $\mu^-=5$; $\mu^+=1000$, $\mu^-=1$; $\mu^+=1$, $\mu^-=1000$. The errors and rates of convergence are listed in Tables~\ref{ex_biao1}-\ref{ex_biao4}. All data indicate that  the IFE method achieves the optimal convergence rates, which in turn confirms our theoretical analysis.

\begin{table}[H]
\caption{Errors of the IFE method  for the example with $\mu^+=5$, $\mu^-=1$.\label{ex_biao1}}
\begin{center}
{\small
\begin{tabular}{|c|c c|c c|c c|}
  \hline
       $N$  &  $\|e_\mathbf{u}\|_{L^2}$   &  rate   &  $|e_\mathbf{u}|_{H^1}$  &  rate  &  $\|e_p\|_{L^2}$  &  rate  \\ \hline
         8    &  1.001E-02    &              &  2.020E-01   &             &  2.476E-01   &      \\ \hline
        16   &   2.688E-03    &  1.90    &  1.065E-01    &  0.92   &   1.297E-01    &  0.93 \\ \hline
        32   &   6.821E-04    &  1.98    &  5.422E-02    &  0.97   &   6.154E-02    & 1.08 \\ \hline
        64   &   1.667E-04    &  2.03    &  2.722E-02    &  0.99   &   2.971E-02    &  1.05 \\ \hline
       128  &    4.216E-05   &   1.98   &   1.364E-02   &   1.00   &   1.459E-02   &   1.03 \\ \hline
       256   &   1.054E-05   &   2.00   &   6.826E-03   &   1.00   &   7.250E-03   &   1.01 \\ \hline
       512   &   2.642E-06   &   2.00   &   3.414E-03   &   1.00    &  3.614E-03   &   1.00 \\ \hline
\end{tabular}
}
\end{center}
\end{table}

\begin{table}[H]
\caption{Errors of the IFE method  for the example with $\mu^+=1$, $\mu^-=5$.\label{ex_biao2}}
\begin{center}
{\small
\begin{tabular}{|c|c c|c c|c c|}
  \hline
       $N$  &  $\|e_\mathbf{u}\|_{L^2}$   &  rate   &  $|e_\mathbf{u}|_{H^1}$  &  rate  &  $\|e_p\|_{L^2}$  &  rate  \\ \hline
        8    &  2.497E-02      &            &  6.643E-01     &            &  2.241E-01     &        \\ \hline
        16   &   6.419E-03    &  1.96    &  3.329E-01    &  1.00   &   1.172E-01   &   0.93 \\ \hline
        32   &   1.605E-03    &  2.00    &  1.667E-01    &  1.00    &  5.427E-02   &   1.11 \\ \hline
        64   &   3.997E-04    &  2.01    &  8.335E-02    &  1.00    &  2.653E-02   &   1.03 \\ \hline
       128  &    9.972E-05   &   2.00   &   4.169E-02    &  1.00   &   1.330E-02  &    1.00 \\ \hline
       256   &   2.490E-05   &   2.00   &   2.084E-02   &   1.00   &   6.631E-03  &    1.00 \\ \hline
       512   &   6.221E-06  &    2.00   &   1.042E-02    & 1.00    &  3.310E-03   &   1.00 \\ \hline
\end{tabular}
}
\end{center}
\end{table}

\begin{table}[H]
\caption{Errors of the IFE method  for the example with $\mu^+=1000$, $\mu^-=1$.\label{ex_biao3}}
\begin{center}
{\small
\begin{tabular}{|c|c c|c c|c c|}
  \hline
       $N$  &  $\|e_\mathbf{u}\|_{L^2}$   &  rate   &  $|e_\mathbf{u}|_{H^1}$  &  rate  &  $\|e_p\|_{L^2}$  &  rate  \\ \hline
         8     &  9.349E-03    &      &   1.228E-01     &   &     3.835E-01    &    \\ \hline
        16     &  2.906E-03     &  1.69     &  6.905E-02      & 0.83      & 3.490E-01      & 0.14 \\ \hline
        32     &  8.687E-04     &  1.74      & 3.752E-02      & 0.88      & 1.759E-01      & 0.99 \\ \hline
        64     &  1.971E-04     &  2.14      & 1.976E-02      & 0.92      & 9.581E-02      & 0.88 \\ \hline
       128     &  5.417E-05     &  1.86      & 1.100E-02     &  0.85      & 5.046E-02     &  0.93 \\ \hline
       256     &  1.402E-05     &  1.95      & 5.827E-03     &  0.92      & 1.979E-02     &  1.35 \\ \hline
       512     &  3.539E-06    &   1.99      & 2.981E-03    &   0.97      & 7.686E-03     &  1.36 \\ \hline
\end{tabular}
}
\end{center}
\end{table}

\begin{table}[H]
\caption{Errors of the IFE method  for the example with  $\mu^+=1$, $\mu^-=1000$.\label{ex_biao4}}
\begin{center}
{\small
\begin{tabular}{|c|c c|c c|c c|}
  \hline
       $N$  &  $\|e_\mathbf{u}\|_{L^2}$   &  rate   &  $|e_\mathbf{u}|_{H^1}$  &  rate  &  $\|e_p\|_{L^2}$  &  rate  \\ \hline
          8   &   2.517E-02   &            &    6.636E-01   &              &  2.275E-01   &          \\ \hline
        16    &  6.444E-03    &  1.97   &   3.329E-01     & 1.00    &  1.426E-01    &  0.67\\ \hline
        32    &  1.618E-03    &  1.99   &   1.667E-01    &  1.00    &  9.357E-02    &  0.61\\ \hline
        64    &  4.049E-04    &  2.00   &   8.336E-02    &  1.00    &  6.253E-02    &  0.58\\ \hline
       128   &   1.010E-04   &   2.00   &   4.169E-02   &   1.00   &   2.371E-02   &   1.40\\ \hline
       256   &   2.518E-05   &   2.00   &   2.084E-02   &   1.00    &  1.014E-02   &   1.23\\ \hline
       512   &   6.263E-06  &    2.01    &  1.042E-02   &   1.00    &  4.677E-03   &   1.12\\ \hline
\end{tabular}
}
\end{center}
\end{table}

 \section{Conclusions}\label{sec_con}
 In this paper we have developed and analyzed an IFE method for Stokes interface problems with discontinuous viscosity coefficients. The IFE space is constructed by modifying the traditional $CR$-$P_0$  finite element space.  We have shown the unisolvence of IFE basis functions and the optimal approximation capabilities of IFE space.  The stability and the optimal error estimates have been derived rigorously. This paper presents the first theoretical analysis for IFE methods for Stokes interface problems.
In the future we intend to study the Stokes interface problems with non-homogeneous jump conditions and construct IFE spaces for three-dimensional Stokes interface problems.

\section*{Acknowledgments}
H. Ji was partially supported by the National Natural Science Foundation of China (Grants Nos. 11701291, 12101327 and 11801281) and the Natural Science Foundation of Jiangsu Province (Grant No. BK20200848); F. Wang was partially supported by the National Natural Science Foundation of China (Grant Nos. 12071227 and 11871281) and the Natural Science Foundation of the Jiangsu Higher Education Institutions of China (Grant No. 20KJA110001);  J. Chen was partially supported by the National Natural Science Foundation of China (Grant Nos. 11871281, 11731007 and 12071227); Z. Li was partially supported by  a Simons grant (No. 633724).

\section*{Declaration of competing interest}
The authors declare that they have no known competing financial interests or personal relationships that could have appeared to influence the work reported in this paper.
 
\bibliographystyle{plain}

\end{document}